\def\dOi{11(1:11)2015}
\subjclass{F.1.1 Models of Computation, F.4.1 Mathematical Logic,
  G.1.2 Approximation, G.m}
\newcommand{\pref}{\prettyref} 	
\newtheorem{property}{Property}
\newtheorem*{acknowledgements*}{Acknowledgements}
\newcommand{\N}{\mathbb{N}}
\newcommand{\Z}{\mathbb{Z}}
\newcommand{\R}{\mathbb{R}}
\newcommand{\Q}{\mathbb{Q}}
\newcommand{\Zplus}{\Z^+}
\newcommand{\Qplus}{\Q^+}
\newcommand{\divides}{\mid}
\newcommand{\union}{\cup}
\newcommand{\disjunion}{\dot{\cup}}
\newcommand{\absval}[1]{\lvert #1\rvert}
\newcommand{\onespace}{\quad}
\newcommand{\Th}{\textrm{th}}
\DeclareMathOperator{\diam}{diam}
\DeclareMathOperator{\ord}{ord} 
\DeclareMathOperator{\clvar}{cl}
\DeclareMathOperator{\dom}{dom} 
\DeclareMathOperator{\img}{im} 
\DeclareMathOperator{\id}{id} 
\newcommand{\setconstr}{\divides}
\newcommand{\cl}{\overline}
\newcommand{\compose}{\circ}
\newcommand{\trfns}{R^{(1)}} 
\newcommand{\Bairespc}{\mathbb{B}} 
\newcommand{\pr}{\ensuremath{\pi}}
\newcommand{\emptystring}{\lambda}       
\newcommand{\pwrset}{\mathcal{P}}
\newcommand{\mathwrt}{\text{ w.r.t.~}} 
\newcommand{\mathst}{\text{ s.t.~}} 
\newcommand{\ball}[2]{B(#1;#2)}
\newcommand{\clball}[2]{\bar{B}(#1;#2)}
\newcommand{\ballmetric}[3]{B_{#1}(#2;#3)}
\newcommand{\clballmetric}[3]{\bar{B}_{#1}(#2;#3)}
\newcommand{\nbhd}[2]{N_{#2}(#1)} 
\newcommand{\clnbhd}[2]{\bar{N}_{#2}(#1)} 
\newcommand{\hausdmetr}{d_{\textrm{H}}} 
\newcommand{\bdry}{\partial}
\newcommand{\Cont}{\mathit{C}}  
\newcommand{\leastmu}[2]{\mathbf{\mu}#1\left(#2\right)} 
\newcommand{\ordomega}{\mathit{\omega}} 
\newcommand{\FS}{\textrm{FS}} 
\newcommand{\crsarr}{\rightrightarrows} 
\newcommand{\KO}{\mathcal{KO}} 
\newcommand{\deltac}{\delta_{\textrm{cover}}} 
\newcommand{\deltamc}{\delta_{\textrm{min-cover}}} 
\newcommand{\deltadc}{\delta_{\textrm{disj-cover}}'} 
\newcommand{\deltar}{\delta_{\textrm{range}}} 
\begin{document}
\title[Effective zero-dimensionality]{Effective zero-dimensionality for computable\\ metric spaces\rsuper*}

\author[R.~Kenny]{Robert Kenny}	
\address{School of Mathematics \& Statistics, The University of Western Australia, Perth, Australia}
\email{robert.kenny@uwa.edu.au}  

%



\keywords{zero-dimensional, dimension theory, computable analysis}
\titlecomment{{\lsuper*}Results in \pref{sec:zdsubsets} have been presented at CCA 2013, Nancy, France, Jul 8-10 2013}


\begin{abstract}
  \noindent We begin to study classical dimension theory from the
  computable analysis (TTE) point of view.  For computable metric
  spaces, several effectivisations of zero-dimensionality are shown
  to be equivalent.  The part of this characterisation that concerns
  covering dimension extends to higher dimensions and to closed
  shrinkings of finite open covers.  To deal with zero-dimensional
  subspaces uniformly, four operations (relative to the space and a
  class of subspaces) are defined; these correspond to definitions of
  inductive and covering dimensions and a countable basis condition.
  Finally, an effective retract characterisation of
  zero-dimensionality is proven under an effective compactness
  condition.  In one direction this uses a version of
  the construction of bilocated sets.
\end{abstract}

\maketitle


\section{Introduction}\label{sec:intro}
Various spaces of symbolic dynamics \cite{LindMarcus}, such as $X=A^{\N}$ for a finite alphabet $A$ or the sofic subshifts, are useful examples of zero-dimensional topological spaces, interesting both for dynamics and in connection with computation.  Some similar remarks apply to the spaces of cellular automata $A^{\Z^n}$ and to a lesser extent to general subshifts.
To deal effectively with sets which are zero-dimensional in non-symbolic mathematical contexts, however (such
as in $\R^n$ or the minimal sets of an expansive compact dynamical system \cite[Thm 2.2.44]{AokiHiraide}), it
is desirable to examine possible effective versions of this property.  In the present work, we begin a basic
investigation to consider effective zero-dimensionality both of computable metric spaces and of their closed
subsets, in the framework of computable analysis via representations (see \cite{Weihrauchbook2},
\cite{BrattkaHertlingWeihrauch08}).

To this end, for a topological space $X$, recall that a subset
$B\subseteq X$ is \emph{clopen} if $B$ is open and closed, equivalently if the boundary $\bdry B$ is empty.
For a separable metrizable space $X$, the following conditions are equivalent:
\begin{enumerate}
\item\label{zdii} $(\forall p\in X)(\forall A\in\Pi^0_1(X))\left( p\not\in A\implies
\emptyset\text{ is a partition between $p$ and $A$} \right)$,
\item\label{zdiii} $(\forall A,B\in\Pi^0_1(X))\left( A\cap B=\emptyset\implies
\emptyset\text{ is a partition between $A$ and $B$} \right)$,
\item\label{zdiv} $(\forall \mathcal{U})(\exists \mathcal{V})\left( \mathcal{U}\text{ is an open cover of }X
\implies\mathcal{V}\text{ is a partition into open sets refining $\mathcal{U}$} \right),$
\item\label{zdv}
$(\forall \mathcal{U})(\exists \mathcal{V})\bigl(
\mathcal{U}\text{ is a finite open cover of }X\implies
\mathcal{V}\text{ is a finite partition into open sets}\\ \text{ refining $\mathcal{U}$}
\bigr),$
\item\label{zdvi} there exists a countable basis $\mathcal{B}$ for the topology of $X$ consisting of clopen
sets,
\item\label{zdviii} $(\forall \mathcal{U})(\exists \mathcal{V}) \bigl(
 \mathcal{U}\text{ is a finite open cover}\implies
\mathcal{V}\text{ is an open shrinking of $\mathcal{U}$ by pairwise}\\
\text{ disjoint sets}\bigr),$
\item\label{zdix} $(\forall A\in\Pi^0_1(X))(\exists f\in \Cont(X,X))\left( A\neq\emptyset\implies \img f=A\wedge f|_A=\id_A \right).$
\end{enumerate}
Here, in (\ref{zdii}) and (\ref{zdiii}), $P$ is a \emph{partition} between disjoint $A,B\subseteq X$ if there
exist disjoint open $U,V\subseteq X$ such that $A\subseteq U$, $B\subseteq V$ and $X\setminus P=U\union V$.
In (\ref{zdiv}) and (\ref{zdv}) a partition (of $X$) is a pairwise disjoint family of sets (with union equal to
$X$).
In (\ref{zdviii}) a \emph{shrinking} of a cover $(A_i)_{i\in I}$ of $X$ is a cover $(B_i)_{i\in I}$
satisfying $B_i\subseteq A_i$ for all $i\in I$.
A nonempty space $X$ satisfying (\ref{zdii}) (or any of the equivalent conditions) is \emph{zero-dimensional};
a subset $Y\subseteq X$ is \emph{zero-dimensional} if $Y$ is zero-dimensional in the relative topology
$\mathcal{T}_X|^Y$.

Next, recall that any zero-dimensional separable metrizable $X$ is homeomorphic to a subspace of
the Cantor space $C:=\{0,1\}^{\N}$ \cite[Thm 7.8, p 38]{Kechris}.  For strictly topological questions on
zero-dimensional spaces it is thus possible to consider only subspaces of $C$.
In this paper we will address our questions from the slightly more intrinsic point of view mentioned above, treating zero-dimensionality on a computable metric space $X$ and its subsets.
More specifically, we consider computable versions of the existence statements (\ref{zdii})-(\ref{zdix});
these are certain multi-valued
operations which, stopping short of studying Weihrauch degrees, we require to be computable.  In the case of a
subset $Y\subseteq X$, zero-dimensionality of $Y$ can be stated in several ways using closed or open subsets
of $X$, and these statements also can be viewed as multi-valued operations.  While a systematic treatment is not
given, we present various definitions of operations (corresponding to equivalent forms of
zero-dimensionality) and some of their interrelations.

Thus, in \pref{sec:zdsubsets} three implications are proven between four operations relevant for a general class
$\mathcal{Y}\subseteq\pwrset(X)$ of zero-dimensional or empty subsets of cardinality
$\absval{\mathcal{Y}}\leq 2^{\aleph_0}$; these correspond to (\ref{zdii}), (\ref{zdiii}), (\ref{zdvi}) and,
loosely, to a condition like (\ref{zdiv}) or (\ref{zdviii}).
Further results on the four operations for $\mathcal{Y}=\{Y\in\Pi^0_1(X)\setconstr\dim Y\leq 0\}$
under effective local compactness or similar assumptions
will be discussed elsewhere.  In
\pref{sec:zdspcs} the results of \pref{sec:zdsubsets} are specialised to the case $Y=X$, and a robust notion
of effectively zero-dimensional computable metric space is found to exist.
Some more evidence for the suitability of that definition is provided by \pref{sec:dimsec}, which deals with
covering dimension (essentially extending the conditions (\ref{zdiv}), (\ref{zdv}) and (\ref{zdviii})), though
in an ad hoc way.

We also present, in \pref{sec:repnsec}, an effective version of the decomposition of totally bounded open subsets of zero-dimensional spaces found in \cite[Cor 26.II.1]{Kuratowski}.  This is used (with an effective
compactness assumption) to prove \pref{thm:retracta}, an effectivization of
(\ref{zdix}) above.  Finally, in \pref{sec:retractsec} a converse \pref{prn:retractb} is proven.  This relies
on the existence of so-called bilocated sets from the constructive analysis literature; some computable
analysis versions of these proofs are given in the same section.  Sections \ref{sec:second} and
\ref{sec:covsec} respectively discuss notation and supporting results on general covering
properties of metric spaces (namely, effective versions of the Lindel{\" o}f property, and swelling and shrinking of finite covers).

\section{Notation}\label{sec:second}
By $\langle\cdot\rangle:\N^{*}\to\N$ and $\langle\cdot,\cdot\rangle:\N^2\to\N$ we denote standard tupling
functions, with corresponding coordinate projections $\pr_1,\pr_2:\N\to\N$ in the binary case.  A standard numbering $\nu_{\N^{*}}$ of $\N^{*}$ is also introduced by $\nu_{\N^{*}}\langle w\rangle:=w$ ($w\in\N^{*}$).
Similarly, with $\Bairespc:=\N^{\N}$ we define $\langle\cdot,\cdot\rangle:\Bairespc^2\to\Bairespc$ and
$\langle\cdot,\dots\rangle:\Bairespc^{\N}\to\Bairespc$ by
\begin{equation*}
\langle p^{(0)},p^{(1)}\rangle(2 i+z)=p^{(z)}_i \: \text{ and }\:
\langle p^{(0)},p^{(1)},\dots\rangle(\langle i,j\rangle) = p^{(i)}_j
\end{equation*}
(here $p=p_0 p_1\dots\in\Bairespc$, i.e.~$p_i:=p(i)$ for every $p\in\Bairespc$, $i\in\N$).  Again we write
$\pr_1,\pr_2:\Bairespc\to\Bairespc$ for the coordinate projections in the binary case.  We will also
occasionally consider projections $\pr_1:X\times Y\to X$ and $\pr_2:X\times Y\to Y$ for any cartesian product
$X\times Y$; it will be clear from the context which of the above notions is meant.
Further, in a metric space $X$, we write
\begin{equation*}
\nbhd{A}{\epsilon}:=\bigcup_{x\in A}\ball{x}{\epsilon}\: \text{ and } \:
\clnbhd{A}{\epsilon}:=\bigcup_{x\in A}\clball{x}{\epsilon}
\end{equation*}
for any $A\subseteq X$ and $\epsilon>0$.

In general, we assume familiarity with the framework of computable analysis via representations
\cite{Weihrauchbook2}, \cite{BrattkaHertlingWeihrauch08}.  We will also use some notation for specific
representations from \cite{BrattkaPresser}.
If $(X_i,\delta_i)$ ($1\leq i\leq n$) and $(Y,\delta')$ are represented spaces, similarly to
\cite{Weihrauchbook2}, a $(\delta_1,\dots,\delta_n;\delta')$-\emph{realiser} of an operation
$f:\subseteq X_1\times\dots\times X_n\crsarr Y$ is a map $F:\subseteq\Bairespc^n\to\Bairespc$ such that
\begin{align*}
& F(p^{(1)},\dots,p^{(n)})\in (\delta')^{-1}f(\delta_1(p^{(1)}),\dots,\delta_n(p^{(n)}))
\: \text{ whenever } \\
& (p^{(1)},\dots,p^{(n)})\in\prod_{i=1}^n\dom\delta_i \:\text{ and }\:
(\delta_1(p^{(1)}),\dots,\delta_n(p^{(n)}))\in\dom f.
\end{align*}

However, unless otherwise mentioned, when representations $\delta_1$, $\delta_2$, $\delta'$ are understood a
`realiser' of $f:\subseteq X_1\times X_2\crsarr Y$ will be a map $F:\subseteq\Bairespc\to\Bairespc$, namely a
$([\delta_1,\delta_2];\delta')$-realiser.  This convention has some minor advantages where brevity is concerned.

For a computable metric space $(X,d,\nu)$, in this paper the
\emph{Cauchy representation} $\delta_X:\subseteq\Bairespc\to X$ is defined by
\begin{equation*}
p\in\delta_X^{-1}\{x\}:\iff \lim_{i\to\infty}\nu(p_i)=x\wedge
(\forall i,j\in\N) \, d(\nu(p_i),\nu(p_j))<2^{-\min\{i,j\}}.
\end{equation*}
A representation $\rho$ of $\R$ will be used less often; for definiteness, let it be the Cauchy representation
of $(\R,d,\nu_{\Q})$, where $d(x,y)=\absval{x-y}$.

Let $(X,\mathcal{T})$ be a second countable topological space and let $\alpha,\beta:\N\to\mathcal{T}$ be
numberings of possibly different countable bases.
\begin{defi}\label{def:incldef}
$(\sqsubset) \subseteq \N^2$ is a \emph{formal inclusion} of $\alpha$ with respect to $\beta$ if
\begin{equation*}
(\forall a,b\in\N)(a\sqsubset b\implies \alpha(a)\subseteq\beta(b)).
\end{equation*}
Consider the following axioms, in order of increasing strength.
\begin{enumerate}
\item\label{inclone} $(\forall b)(\forall x\in X)(\exists a)(x\in\beta(b)\implies x\in\alpha(a)\wedge a\sqsubset b)$
\item\label{incltwo} $(\forall b)(\forall x\in X)(\forall U\in\mathcal{T})(\exists a)\left(
 x\in\beta(b)\cap U\implies x\in\alpha(a)\subseteq U\wedge a\sqsubset b \right)$
\item\label{inclthree} $(\forall a,b)(\forall x\in X)(\exists c)(x\in\beta(a)\cap\beta(b)\implies
 x\in\alpha(c)\wedge c\sqsubset a\wedge c\sqsubset b)$
\item\label{inclfive} $(\forall b)(\forall x\in X)(\exists U\in\Sigma^0_1(X))(\forall a)(x\in\alpha(a)\subseteq \beta(b)\cap U\implies a\sqsubset b)$
\end{enumerate}
\end{defi}
In particular, in a computable metric space $(X,d,\nu)$, consider numberings of ideal open and closed balls
\begin{align*}
\alpha & :\N\to\mathcal{B}:=\img\alpha\subseteq\mathcal{T},\langle a,r\rangle\mapsto
\ballmetric{d}{\nu(a)}{\nu_{\Qplus}(r)},\\
\hat{\alpha} & :\N\to\img\hat{\alpha}\subseteq\Pi^0_1(X),\langle a,r\rangle\mapsto
\clballmetric{d}{\nu(a)}{\nu_{\Qplus}(r)}.
\end{align*}
Here $\nu_{\Qplus}$ is a standard total numbering of the positive rationals $\Qplus$ with a
$(\nu_{\Qplus},\id_{\N})$-computable right-inverse $\overline{\cdot}:\Qplus\to\N$.

The relation $\sqsubset$ defined by
\begin{equation*}
\langle a,r\rangle\sqsubset\langle b,q\rangle :\iff d(\nu(a),\nu(b))+\nu_{\Qplus}(r)<\nu_{\Qplus}(q)
\end{equation*}
is a formal inclusion of $\alpha$ with respect to itself; moreover it satisfies
$c\sqsubset d\implies\hat{\alpha}(c)\subseteq\alpha(d)$ and
(\ref{inclfive}).
For the purposes of this paper, we will often call a formal inclusion satisfying property (\ref{inclone}) a
\emph{refined inclusion}.

From any basis numbering $\alpha$ (of a
topological space $X$) we can define a representation
\begin{equation*}
\delta:\Bairespc\to\Sigma^0_1(X),p\mapsto \bigcup\{\alpha(p_i-1)\setconstr i\in\N, p_i\geq 1\}
\end{equation*}
of the hyperspace of open sets in $X$.  For a computable metric space with
$\alpha$ as above, this representation is denoted $\delta_{\Sigma^0_1(X)}$, or $\delta_{\Sigma^0_1}$ if $X$ is
clear from the context.
Correspondingly, we write
\begin{equation*}
\delta_{\Pi^0_1}:\Bairespc\to\Pi^0_1(X),p\mapsto X\setminus\delta_{\Sigma^0_1}(p)
\end{equation*}
for a representation of the hyperspace of closed sets in $X$, and
\begin{equation*}
\delta_{\Delta^0_1}:\subseteq\Bairespc\to\Delta^0_1(X),\langle p,q\rangle\mapsto \delta_{\Sigma^0_1}(p) =
\delta_{\Pi^0_1}(q)
\end{equation*}
(with natural domain) for a representation of the clopen sets in $X$.  When writing
$\Sigma^0_1(X)$, $\Pi^0_1(X)$, $\Delta^0_1(X)$ we always assume these classes are equipped with the
corresponding representations.

For the purposes of this paper we need two more representations of the class $\mathcal{A}(X)$ of closed sets in
$X$ (cf.~\cite{BrattkaPresser}).  Define
$\deltar,\delta_{\textrm{dist}}^{>}:\subseteq\Bairespc\to\mathcal{A}(X)$ by
\begin{multline*}
\langle p^{(0)},\dots\rangle\in\deltar^{-1}\{A\} :\iff
\bigl( A=\emptyset\wedge(\forall i)p^{(i)}=0^{\ordomega} \bigr) \vee
\bigl( A\neq\emptyset\wedge\{p^{(i)}\setconstr i\in\N\}\subseteq P^{-1}\delta_X^{-1}A \wedge \\
(\forall x\in A)(\forall U\in\mathcal{T}_X)(\exists i)(x\in U\implies
(\delta_X\compose P)(p^{(i)})\in U) \bigr),
\end{multline*}
where $P:\subseteq\Bairespc\to\Bairespc$ is defined by
$P(p)_i:=p_i-1$ ($\dom P=\{p\in\Bairespc\setconstr (\forall i)p_i\geq 1\}$),
\begin{equation*}
p\in(\delta_{\textrm{dist}}^{>})^{-1}\{A\} :\iff\textrm{ $\eta_p$ $(\delta_X,\overline{\rho_{<}})$-realises }
d_A:X\to\bar{\R},
\end{equation*}
where
\begin{equation*}
p\in\overline{\rho_{<}}^{-1}\{t\} :\iff
\{n\in\N\setconstr \nu_{\Q}(n)<t\}=\{p_i-1\setconstr i\in\N\wedge p_i\geq 1\}.
\end{equation*}
Here $\eta_p=\eta(p)$ for a certain `canonical' representation $\eta$ of the set
$\mathbf{F}=\{F:\subseteq\Bairespc\to\Bairespc\setconstr
F\text{ continuous with $\mathcal{G}_{\delta}$ domain}\}$.  In particular, $\eta$ satisfies certain versions
of the smn and utm theorems; see \cite[\S 2.3]{Weihrauchbook2} for precise (and rather general) statements.
$\deltar$ and $\delta_{\textrm{dist}}^{>}$ will be used in Sections \ref{sec:repnsec} and \ref{sec:retractsec}.

Next, for any represented set $(X,\delta)$, consider the set $X^{*}$ of finite-length words over the alphabet
$X$.  A representation of $X^{*}$ is defined by
\begin{equation*}
\delta^{*}:\subseteq\Bairespc\to X^{*},n.\langle p^{(0)},p^{(1)},\dots\rangle\mapsto\begin{cases}
\emptystring, &\mbox{\textrm{ if $n=0$}}\\
\delta(p^{(0)})\dots\delta(p^{(n-1)}), &\mbox{\textrm{ if $n\geq 1$}}\end{cases}\quad (n\in\N),
\end{equation*}
where $\emptystring$ is the empty word.
In Sections \ref{sec:covsec}, \ref{sec:zdspcs}, \ref{sec:dimsec} and
\ref{sec:retractsec} we will use $\delta^{*}$ for various representations $\delta$ of hyperspaces of a fixed
computable metric space $X$.
If $(I,\nu)$ is a numbered set, a representation $\delta_{\nu}:\subseteq\Bairespc\to I$ is defined by
\begin{equation*}
\dom\delta_{\nu}=\{p\in\Bairespc\setconstr p_0\in\dom\nu\}\onespace\text{and}\onespace\delta_{\nu}(p)=\nu(p_0).
\end{equation*}
Consider now the set $E(X)$ of finite subsets of $X$.
For a numbered set $(I,\nu)$ one can define a standard numbering $\FS(\nu)$ of $E(I)$ following
\cite[Defns 2.2.2, 2.2.14(5)]{Weihrauchbook}: first, define a total numbering $e$ of $E(\N)$ by $e=\psi^{-1}$
for the bijection $\psi:E(\N)\to\N,A\mapsto \sum_{i\in A}2^i$.  Then define
\begin{equation*}
\FS(\nu):\subseteq\N\to E(I),k\mapsto \{\nu(i)\setconstr i\in e(k)\}\onespace \text{ where }\onespace
\dom\FS(\nu)=\{k\setconstr e(k)\subseteq\dom\nu\}.
\end{equation*}
The next lemma verifies equivalence of two representations arising from these definitions.
\begin{lem}
For any numbered set $(I,\nu)$,
$\delta_{\FS(\nu)}\equiv\delta_{E(I)}$, where
\begin{equation*}
\begin{split}
p\in \delta_{E(I)}^{-1}\{S\}:\iff
(\exists k)(\forall i)\left( (i<k\implies p_i\in 1+\dom\nu)\wedge(i\geq k\implies p_i=0)\right)\\
\wedge\{\nu(p_i-1)\setconstr i<k\}=S.
\end{split}
\end{equation*}
\end{lem}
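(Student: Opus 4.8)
The plan is to prove the two representations $\delta_{\FS(\nu)}$ and $\delta_{E(I)}$ are mutually reducible by exhibiting a computable translator in each direction. Recall $\delta_{\FS(\nu)}$ reads the single entry $p_0$, interprets it via the numbering $\FS(\nu)$ of $E(I)$, so $\delta_{\FS(\nu)}(p)=\{\nu(i)\setconstr i\in e(p_0)\}$ whenever $e(p_0)\subseteq\dom\nu$; here $e=\psi^{-1}$ with $\psi(A)=\sum_{i\in A}2^i$ encoding a finite set $A\subseteq\N$ by the positions of the $1$s in the binary expansion of $e^{-1}$. On the other side, $\delta_{E(I)}(p)$ reads an initial segment $p_0,\dots,p_{k-1}$ of entries (terminated by the first $0$), each shifted by $1$ into $\dom\nu$, and outputs the finite set $\{\nu(p_i-1)\setconstr i<k\}$. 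Both name the same finite subset of $I$ but package the index information differently: one as a single binary-coded natural number, the other as a null-terminated list of (shifted) $\nu$-indices.

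\medskip

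\noindent\emph{Reduction $\delta_{\FS(\nu)}\leq\delta_{E(I)}$.} First I would describe the translator that, given a $\delta_{E(I)}$-name $p$, produces a $\delta_{\FS(\nu)}$-name. From $p$ one extracts the finite list of indices $n_0:=p_0-1,\dots,n_{k-1}:=p_{k-1}-1$ by reading until the first $0$; since $k$ and each $p_i$ are finite, this is a computable finite search. The associated finite set of $\N$-indices is $A:=\{n_i\setconstr i<k\}$, and one computes its code $m:=\psi(A)=\sum_{i<k}2^{n_i}=e^{-1}(A)\in\N$ (noting $A$ may have repetitions in the list but $\psi$ depends only on the underlying set, so duplicates are harmless). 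The output name is then $q$ with $q_0=m$ and, say, $q_i=0$ for $i\geq1$; since $\delta_{\FS(\nu)}$ inspects only $q_0$, this gives $\delta_{\FS(\nu)}(q)=\{\nu(i)\setconstr i\in e(m)\}=\{\nu(n_i)\setconstr i<k\}=\delta_{E(I)}(p)$, using that $e(m)=A$. The domain condition is preserved because $e(m)=A\subseteq\dom\nu$ whenever each $p_i-1\in\dom\nu$.

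\medskip

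\noindent\emph{Reduction $\delta_{E(I)}\leq\delta_{\FS(\nu)}$.} Conversely, given a $\delta_{\FS(\nu)}$-name $p$, the relevant datum is $p_0$, and $e(p_0)\subseteq\N$ is a finite set computable from $p_0$ by inspecting the (finitely many nonzero) binary digits of $p_0$. Enumerate $e(p_0)=\{n_0,\dots,n_{k-1}\}$ in increasing order; then output the name $q$ with $q_i=n_i+1$ for $i<k$ and $q_i=0$ for $i\geq k$. This $q$ satisfies the defining clause of $\delta_{E(I)}$ with witness $k$, each $q_i-1=n_i\in e(p_0)\subseteq\dom\nu$ for $i<k$, and $\{\nu(q_i-1)\setconstr i<k\}=\{\nu(n)\setconstr n\in e(p_0)\}=\delta_{\FS(\nu)}(p)$, as required.

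\medskip

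I do not expect any serious obstacle here; the content is entirely a bookkeeping translation between two total, computable encodings of finite sets, and the only points needing a little care are that both translators are well-defined on the respective domains (i.e.\ they preserve the domain conditions $e(\cdot)\subseteq\dom\nu$ resp.\ $p_i-1\in\dom\nu$), and that the binary-code computation $A\mapsto\psi(A)$ and its inverse $e$ are computable as maps on $\N$ — which is immediate from the explicit formula $\psi(A)=\sum_{i\in A}2^i$. The mildest subtlety is handling possible repeated indices in a $\delta_{E(I)}$-name, but since both $\psi$ and the final set-formation discard multiplicity, correctness is unaffected.
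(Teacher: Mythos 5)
Your proof is correct and takes essentially the same route as the paper: both directions are the evident translators between the single binary-coded entry $p_0$ (via $e=\psi^{-1}$) and the null-terminated list of indices shifted by $1$, with the same attention to duplicate indices and domain preservation. One caveat: under the paper's (standard TTE) convention, $\delta\leq\delta'$ means $\delta$-names are computably translated \emph{into} $\delta'$-names, so your two headings are interchanged --- your translator from $\delta_{E(I)}$-names to $\delta_{\FS(\nu)}$-names establishes $\delta_{E(I)}\leq\delta_{\FS(\nu)}$ and vice versa --- but since you construct both translators, the claimed equivalence $\delta_{\FS(\nu)}\equiv\delta_{E(I)}$ is fully proved.
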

\begin{proof}
\textbf{$\delta_{\FS(\nu)}\leq\delta_{E(I)}$}: we use $F:\subseteq\Bairespc\to\Bairespc,a.0^{\ordomega}\mapsto
w.0^{\ordomega}$ where $\absval{w}=\# e(a)$ (the number of nonzero bits in the binary representation of $a$)
and $w_i:=j+1$ if $j$ is the $i^{\Th}$ smallest member of $e(a)$.\\
\textbf{$\delta_{E(I)}\leq \delta_{\FS(\nu)}$}: we use
$F:\subseteq\Bairespc\to\Bairespc,p\mapsto a.0^{\ordomega}$ where
$k:=\leastmu{i}{p_i=0}$ and $a:=\sum\{2^j\setconstr j\in\N\wedge (\exists i<k) p_i=j+1\}$.
\end{proof}

\section{Covering properties}\label{sec:covsec}
For any represented spaces $(X,\delta)$, $(Y,\delta^{\prime})$, denote the set of $(\delta,\delta^{\prime})$-continuous total maps $f:X\to Y$ by $\Cont_{\textrm{s}}(\delta,\delta^{\prime})$.
\begin{lem}\label{lem:ctslindelof}
For computable metric spaces $(X,d,\nu)$, $(Z,d',\nu')$ and
Cauchy representation $\delta_Z$ of $Z$, the computable dense sequence $z_i:=\nu'(i)$ ($i\in\N$) satisfies
\begin{equation*}
\textstyle\bigcup_{i\in\N}u(z_i) = \bigcup_{z\in Z}u(z)
\end{equation*}
for any $u\in\Cont_{\textrm{s}}(\delta_Z,\delta_{\Sigma^0_1(X)})$.
In particular,
\begin{align*}
L' &:\Cont_{\textrm{s}}(\delta_Z,\delta_{\Sigma^0_1(X)})\to\Sigma^0_1(X)^{\N},u\mapsto (u(z_i))_{i\in\N},\\
\union &:\Cont_{\textrm{s}}(\delta_Z,\delta_{\Sigma^0_1(X)})\to\Sigma^0_1(X),u\mapsto
\textstyle\bigcup_{z\in Z}u(z)
\end{align*}
are resp.~$([\delta_Z\to\delta_{\Sigma^0_1(X)}],\delta_{\Sigma^0_1(X)}^{\ordomega})$- and
$([\delta_Z\to\delta_{\Sigma^0_1(X)}],\delta_{\Sigma^0_1(X)})$-computable.
\end{lem}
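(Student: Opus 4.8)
The plan is to establish the displayed set equality first, and then read off computability of $L'$ and $\union$ from standard operations on names. The inclusion $\bigcup_{i\in\N}u(z_i)\subseteq\bigcup_{z\in Z}u(z)$ is immediate since each $z_i\in Z$, so everything rests on the reverse inclusion. Fix $z\in Z$ and $x\in u(z)$, let $F:\subseteq\Bairespc\to\Bairespc$ be a continuous $(\delta_Z,\delta_{\Sigma^0_1(X)})$-realiser of $u$, and let $p\in\delta_Z^{-1}\{z\}$. Since $u(z)=\delta_{\Sigma^0_1(X)}(F(p))=\bigcup\{\alpha(F(p)_j-1)\setconstr j\in\N,\ F(p)_j\geq 1\}$ and $x\in u(z)$, I may fix $j$ with $F(p)_j\geq 1$ and $x\in\alpha(F(p)_j-1)$. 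By continuity of $F$ there is $m\in\N$ such that $F(p)_j$ is already determined by the prefix $p_0,\dots,p_m$ (for every $q\in\dom F$ agreeing with $p$ on these coordinates, $F(q)_j=F(p)_j$).

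The crux is a small piece of name surgery: from $p$ I will manufacture a $\delta_Z$-name of one of the dense points $z_i$ that agrees with $p$ on $p_0,\dots,p_m$. Define $p'$ by $p'_k:=p_k$ for $k\leq m$ and $p'_k:=p_m$ for $k>m$. I claim $p'\in\delta_Z^{-1}\{z_{p_m}\}$: clearly $\lim_k\nu'(p'_k)=\nu'(p_m)=z_{p_m}$, and the fast-Cauchy condition $d'(\nu'(p'_k),\nu'(p'_l))<2^{-\min\{k,l\}}$ holds in each case. For $k,l\leq m$ it is inherited from $p$; if at least one index exceeds $m$, then either both values equal $\nu'(p_m)$ (distance $0<2^{-\min\{k,l\}}$), or the smaller index $l\leq m$ satisfies $\min\{k,l\}=l=\min\{m,l\}$, so the required bound $d'(\nu'(p_m),\nu'(p_l))<2^{-l}$ is exactly the fast-Cauchy condition for $p$ at indices $m$ and $l$. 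This verification, i.e.~checking that freezing a rapidly converging name at a finite stage again yields a valid Cauchy name, is the main (though routine) obstacle, and is exactly where the metric structure of $Z$ and the fact that the $z_i$ are the defining dense sequence come in.

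Because $p'$ agrees with $p$ on $p_0,\dots,p_m$, I get $F(p')_j=F(p)_j\geq 1$, whence $x\in\alpha(F(p')_j-1)\subseteq\delta_{\Sigma^0_1(X)}(F(p'))=u(z_{p_m})$; thus $x\in\bigcup_{i\in\N}u(z_i)$, which proves the reverse inclusion and the displayed equality. For computability, I would observe first that $i\mapsto i^{\ordomega}$ computes a $\delta_Z$-name of $z_i$ (the constant sequence is trivially a fast-Cauchy name of $\nu'(i)$), and that evaluation of a $[\delta_Z\to\delta_{\Sigma^0_1(X)}]$-named map on a $\delta_Z$-named argument is computable by the utm property of the function-space representation. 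Applying this uniformly in $i$ and pairing the outputs yields a $\delta_{\Sigma^0_1(X)}^{\ordomega}$-name of $(u(z_i))_{i\in\N}$, so $L'$ is computable. Finally, by the equality just proven $\union=\bigcup\compose L'$, and forming the countable union of a $\delta_{\Sigma^0_1(X)}^{\ordomega}$-named sequence is computable (interleave the basic balls listed by the component names into a single $\delta_{\Sigma^0_1(X)}$-name), so $\union$ is computable as well.
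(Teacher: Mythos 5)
Your proof is correct and follows essentially the same route as the paper: both arguments fix a continuous realiser $F$, use continuity to find a finite prefix of a name of $z$ determining the relevant basic ball in $F$'s output, and then continue that prefix constantly (the paper's $r=w.w_{\absval{w}-1}^{\ordomega}$ is exactly your frozen name $p'$) to land on a $\delta_Z$-name of one of the dense points $z_i$. Your explicit verification of the fast-Cauchy condition for the frozen name, and the derivation of computability of $L'$ and $\union$ via evaluation (utm) and interleaving of names, just spell out steps the paper leaves implicit.
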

\pref{lem:ctslindelof} plays a similar role to the Lindel{\"o}f property of separable metric spaces, albeit only
for representation-continuous indexed covers.  The operation of continuous intersection for closed subsets,
dual to $\union$, has been considered in \cite{BrattkaGherardiBorelCpxty}.
\begin{proof}
Take
\begin{equation*}
A:=\{w\in\N^{*}\setconstr w.\Bairespc\cap\dom\delta_Z\neq\emptyset\}
=\{w\in\N^{*}\setconstr (\forall i,j<\absval{w})d'(\nu'(w_i),\nu'(w_j))<2^{-\min\{i,j\}}\}.
\end{equation*}
We let $z_i:=\nu'(i)=\nu'(w_{\absval{w}-1})$ for $\emptystring\neq w\sqsubset i^{\ordomega}$.
Now consider $u\in\Cont_{\textrm{s}}(\delta_Z,\delta_{\Sigma^0_1(X)})$ and a
continuous realiser $F:\subseteq\Bairespc\to\Bairespc$ of $u$.  For any $x\in X$ and $z\in Z$ such that $u(z)\ni x$
and $q\in\delta_Z^{-1}\{z\}$, it holds that
\begin{equation*}
u(z)=(\delta_{\Sigma^0_1(X)}\compose F)(q) = \bigcup\{\alpha(F(q)_n-1)\setconstr n\in\N, F(q)_n\geq 1\};
\end{equation*}
we suppose $x\in\alpha(a)$ where $a+1=F(q)_n$.  Since $F$ is continuous, there exists $w\sqsubset q$ such that any
$r\in w.\Bairespc\cap\dom\delta_Z$ satisfies $F(r)_n=a+1$ and hence $(u\compose\delta_Z)(r)\ni x$.  In
particular this applies
to $r=w.w_{\absval{w}-1}^{\ordomega}\in\delta_Z^{-1}\{\nu'(w_{\absval{w}-1})\}=\delta_Z^{-1}\{z_i\}$ for
$i=w_{\absval{w}-1}$.
\end{proof}

We
continue this section with some results around
shrinkings and swellings of covers; as in the
classical case these are useful to give equivalent definitions of bounds on covering dimension.
Following \cite{Engelking}, these constructions depend on
Urysohn's lemma; we specifically are interested in the
effective form from
\cite{WeihrauchTietzeUry}.
\begin{thm}(Weihrauch \cite[Thm 15]{WeihrauchTietzeUry})\label{thm:urylem}
In a computable metric space $X$, define
\begin{equation*}
U:\subseteq\Pi^0_1(X)^2\crsarr \Cont(X,\R),(A,B)\mapsto \{f\setconstr \img f\subseteq [0,1]\wedge f^{-1}\{0\}=A\wedge f^{-1}\{1\}=B\}
\end{equation*}
($\dom U=\{(A,B)\setconstr A\cap B=\emptyset\}$).
Then $U$ is $([\delta_{\Pi^0_1},\delta_{\Pi^0_1}],[\delta_X\to\rho])$-computable.
\end{thm}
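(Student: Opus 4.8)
The plan is to prove an effective Urysohn lemma: given two disjoint closed sets $A,B$ (as $\delta_{\Pi^0_1}$-names) in a computable metric space, produce a continuous function $f\colon X\to[0,1]$ with $f^{-1}\{0\}=A$ and $f^{-1}\{1\}=B$, uniformly and computably. The natural starting point is the classical distance-function formula
\begin{equation*}
f(x)=\frac{d_A(x)}{d_A(x)+d_B(x)},
\end{equation*}
which already realises the desired level sets whenever the denominator never vanishes; disjointness of $A$ and $B$ alone does not guarantee this (if, say, $A$ and $B$ are closed but not separated by positive distance, both distances can approach $0$ simultaneously), so the formula must be handled with care. The key technical point is therefore to show that from a $\delta_{\Pi^0_1}$-name of a closed set one can compute its distance function in an appropriate sense, and that the arithmetic combining the two distance functions is computable on the represented reals.

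First I would recall that the distance function $d_A\colon X\to\bar{\R}$ associated to a closed set is computable from below and above in a way matching the representations in play: from a $\delta_{\Pi^0_1}$-name of $A$ one obtains a name of $d_A$ in the relevant distance representation (this is essentially the content of $\delta_{\textrm{dist}}^{>}$ and its lower counterpart introduced in the notation section, and is standard in the style of \cite{BrattkaPresser}). Next I would verify that the two-argument map $(s,t)\mapsto s/(s+t)$ is $([\delta_X\to\rho],[\delta_X\to\rho];[\delta_X\to\rho])$-computable on the locus where it is well defined, so that feeding in names of $d_A$ and $d_B$ yields a $[\delta_X\to\rho]$-name of $f$. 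The subtle step is handling the points where $d_A(x)+d_B(x)$ could be small: on the domain $A\cap B=\emptyset$ one has that for each $x$ at least one of $d_A(x),d_B(x)$ is strictly positive, but this positivity is not uniform, so the computation of $f(x)$ to a given precision must interleave lower bounds on $d_A(x)+d_B(x)$ with the approximation of the quotient.

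The main obstacle I expect is exactly this non-uniform separation: guaranteeing that the realiser for $f$ produces a fast-converging $\rho$-name of $f(x)$ at every $x$, including those arbitrarily close to $A\cap B$ where both distances tend to $0$ while their ratio remains meaningful. The resolution is to compute $f(x)$ not by evaluating the quotient directly but by searching for a rational lower bound $q>0$ on $d_A(x)+d_B(x)$ — which must exist and be findable from the lower name of $d_A+d_B$, since $A\cap B=\emptyset$ forces $d_A(x)+d_B(x)>0$ for all $x$ — and then using that bound to control the error in the division. Once this positivity search is shown to terminate uniformly on inputs with a valid distance name, the computability of the surrounding arithmetic and the verification that $f^{-1}\{0\}=A$ and $f^{-1}\{1\}=B$ follow from the closedness of $A$ and $B$ and the standard characterisation $d_A(x)=0\iff x\in A$. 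As this is cited as a known result of Weihrauch \cite{WeihrauchTietzeUry}, I would otherwise defer to that reference for the full bookkeeping, as the author's proof presumably does.
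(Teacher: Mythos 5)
You are right that the paper itself offers no argument here---it quotes the result with a citation to Weihrauch---so the comparison must be with the standard effective proof behind that citation. Measured against it, your sketch has a genuine gap at its first step: the claim that from a $\delta_{\Pi^0_1}$-name of a closed set $A$ one can compute $d_A$ ``from below and above'' (i.e.\ pass to $\delta_{\textrm{dist}}^{>}$ and a dual name). Upper bounds on $d_A(x)$ are never obtainable from negative information: a finite prefix of a $\delta_{\Pi^0_1}$-name only constrains $A$ to avoid finitely many enumerated balls, hence is always consistent with $A=\emptyset$, where $d_A\equiv\infty$, so no realiser can ever soundly output a finite upper bound. Lower bounds fail too in general computable metric spaces: certifying $d_A(x)>r$ from an enumeration of balls with union $X\setminus A$ amounts to certifying that $\ball{x}{r}$ is covered by finitely many of them, which is possible when closed balls are effectively compact (e.g.\ in $\R^n$) but not in general. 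Concretely, in Baire space $\N^{\N}$, where the ideal balls are the cylinders $[w]$, take $x=0^{\ordomega}$ and $A=\{10^{\ordomega}\}$, so $d_A(x)=1$: an adversarial name of $A$ covers the cylinder $[0]$ only by the infinitely many cylinders $[0k]$, $k\in\N$, and then every finite prefix of the name leaves some point $0k0^{\ordomega}$ uncovered and hence is consistent with $A''=\{0k0^{\ordomega}\}$, for which $d_{A''}(x)=1/2$; so no sound realiser fed this name can ever enumerate a rational lower bound in $[1/2,1)$, although a $\overline{\rho_{<}}$-name of $d_A(x)=1$ must contain one. This non-reducibility is exactly why the paper keeps $\delta_{\textrm{dist}}^{>}$ as a separate representation and represents $\mathcal{A}(X)$ by $\deltar\sqcap\delta_{\textrm{dist}}^{>}$ in Theorem \ref{thm:retracta} rather than deriving distance data from names of closed sets. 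Finally, even if lower names of $d_A,d_B$ were available, the quotient is not determined by lower bounds alone: $d_A(x)>1$ and $d_B(x)>1$ are consistent both with $d_A(x)/(d_A(x)+d_B(x))$ close to $0$ and close to $1$, so your interleaving scheme cannot produce a $\rho$-name of $f(x)$; two-sided information is unavoidable at this point.

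The repair---and, in substance, the proof behind the citation---is to abandon the true distance functions and use computable surrogates read off directly from the names. If the $\delta_{\Pi^0_1}$-name of $A$ enumerates balls $\ball{c_i}{r_i}$ with $\bigcup_i\ball{c_i}{r_i}=X\setminus A$, set
\begin{equation*}
g_A(x):=\sum_{i\in\N}2^{-i}\min\bigl\{1,\max\{0,\,r_i-d(x,c_i)\}\bigr\}
\end{equation*}
(with summand $0$ for padding entries of the name). Each summand is computable uniformly in $i$, and the series converges at a known geometric rate, so $g_A$ is $[\delta_X\to\rho]$-computable, with full two-sided values, uniformly in the name; moreover $g_A(x)=0$ iff $x$ lies in no enumerated ball, i.e.\ iff $x\in A$. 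Defining $g_B$ likewise, disjointness of $A$ and $B$ gives $g_A(x)+g_B(x)>0$ for every $x$, and $f:=g_A/(g_A+g_B)$ is computable: here your search for a positive rational lower bound on the denominator does terminate, precisely because two-sided approximations are now available. Clearly $\img f\subseteq[0,1]$, $f^{-1}\{0\}=A$ and $f^{-1}\{1\}=B$. So your overall architecture---a quotient of two nonnegative computable functions vanishing exactly on $A$, respectively $B$---is the right one; the gap is that the metric distance functions cannot play that role under the stated representation, and the bump-sum surrogates built from the enumerations must be used instead.
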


\begin{defi}
For any family $\mathcal{A}=(A_i)_{i\in I}\subseteq\pwrset(X)$, a \emph{swelling} of $\mathcal{A}$ is a family
$(B_i)_{i\in I}$ satisfying $(\forall i)(A_i\subseteq B_i)$ and
\begin{equation}\label{eq:nerve}
\bigcap_{j<m}B_{w_j}=\emptyset \iff \bigcap_{j<m}A_{w_j}=\emptyset
\end{equation}
for any $m\geq 1$, $w\in I^m$.
\end{defi}
Classically, any finite collection of closed subsets has an open swelling
and this construction can
be effectivized given suitable data on the emptiness or nonemptiness of intersections in (\ref{eq:nerve}).  Dually, this result allows
($\delta_{\Sigma^0_1}^{*}$-~and) subcover information for a finite open cover to be used to produce closed or open shrinkings computably.
For the present paper, working with such information (coding it appropriately in representations for covers) is unnecessarily complicated;
we instead consider two partial effectivisations of the proof of \cite[Thm 7.1.4]{Engelking}.
For any indexed family $(A_i)_{i\in I}\subseteq\pwrset(X)$, the \emph{order} of the family,
$\ord(A_i)_{i\in I}$, is here defined as the least $n$ such that $\bigcap_{j\leq n}A_{i_j}$ is empty whenever
$i_0,\dots,i_n$ are distinct elements of $I$
(this definition varies slightly from that in \cite{Engelking}).

\begin{lem}\label{lem:swellbddord}
Let $X$ be a computable metric space.  For any $N\in\N$, the operation
$S_{+,N}:\subseteq\Pi^0_1(X)^{*}\crsarr\Sigma^0_1(X)^{*}$ defined by
$\dom S_{+,N}=\{(F_i)_{i<k}\setconstr (F_i)_i \text{ of order }\leq N+1\}$ and
\begin{equation*}
S_{+,N}((F_i)_{i<k})=
\{(U_i)_{i<k}\setconstr (\forall i)(F_i\subseteq U_i)\text{ and } (U_i)_{i<k} \text{ of order }\leq N+1\}
\end{equation*}
is $(\delta_{\Pi^0_1}^{*},\delta_{\Sigma^0_1}^{*})$-computable.
\end{lem}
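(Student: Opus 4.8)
The plan is to effectivise the one-at-a-time enlargement argument behind \cite[Thm 7.1.4]{Engelking}, replacing each appeal to normality by the effective Urysohn lemma \pref{thm:urylem} and replacing closures (which are not computable from an open set in general) by computable closed overapproximations. Recall that order $\leq N+1$ means every intersection of $N+2$ sets with distinct indices is empty. Given $(F_i)_{i<k}$ of order $\leq N+1$, I process the indices $i=0,1,\dots,k-1$ in turn, maintaining the invariant that after step $i$ I have closed sets $\hat U_0,\dots,\hat U_{i-1}$ and open sets $U_0,\dots,U_{i-1}$ with $F_j\subseteq U_j\subseteq\hat U_j$ for $j<i$, such that the family $(\hat U_0,\dots,\hat U_{i-1},F_i,\dots,F_{k-1})$ still has order $\leq N+1$.

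At step $i$, write $C_j:=\hat U_j$ for $j<i$ and $C_j:=F_j$ for $j>i$, and form the closed set $G:=\bigcup_J\bigcap_{j\in J}C_j$, the union taken over all $(N+1)$-element subsets $J$ of $\{0,\dots,k-1\}\setminus\{i\}$ (an empty union, hence $G=\emptyset$, when $k\leq N+1$). Each term $\bigcap_{j\in J}C_j$ is a finite intersection of closed sets and $G$ a finite union of such, so $G$ is $\delta_{\Pi^0_1}$-computable from the current data using the computability of finite intersection and finite union on $\Pi^0_1(X)$ (the latter via the ball structure and axiom (\ref{inclthree})). The order invariant gives $F_i\cap\bigcap_{j\in J}C_j=\emptyset$ for each such $J$, since these are $N+2$ distinct members of the current family, so $F_i\cap G=\emptyset$. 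Applying \pref{thm:urylem} to the disjoint pair $(F_i,G)\in\Pi^0_1(X)^2$ yields, computably, a function $f_i\in\Cont(X,\R)$ with $\img f_i\subseteq[0,1]$, $f_i^{-1}\{0\}=F_i$ and $f_i^{-1}\{1\}=G$. I then set $U_i:=\{x\setconstr f_i(x)<1/2\}$ and $\hat U_i:=\{x\setconstr f_i(x)\leq 1/2\}$; from the $[\delta_X\to\rho]$-name of $f_i$ these are respectively $\delta_{\Sigma^0_1}$- and $\delta_{\Pi^0_1}$-computable sublevel sets. Clearly $F_i\subseteq U_i\subseteq\hat U_i$, while $\hat U_i=\{x\setconstr f_i(x)\leq 1/2\}$ is disjoint from $G=f_i^{-1}\{1\}$. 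Any new $(N+2)$-fold intersection involves $\hat U_i$ and some $N+1$ of the $C_j$, hence lies in $\hat U_i\cap\bigcap_{j\in J}C_j\subseteq\hat U_i\cap G=\emptyset$; intersections not involving $\hat U_i$ are unchanged. Thus the invariant persists with $i$ replaced by $i+1$.

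After the $k$-th step the family $(\hat U_0,\dots,\hat U_{k-1})$ has order $\leq N+1$, and since $U_i\subseteq\hat U_i$ the open family $(U_i)_{i<k}$ also has order $\leq N+1$ while satisfying $F_i\subseteq U_i$; this is the required output, returned as a $\delta_{\Sigma^0_1}^{*}$-name. Reading $k$ and the component $\delta_{\Pi^0_1}$-names off the $\delta_{\Pi^0_1}^{*}$-name is computable, and the whole procedure is a length-$k$ composition of the computable operations above, uniformly in the input and in the parameter $N$ (which only controls the size of the subsets $J$). Hence $S_{+,N}$ is $(\delta_{\Pi^0_1}^{*},\delta_{\Sigma^0_1}^{*})$-computable.

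I expect the main obstacle to be the bookkeeping that keeps every set fed into \pref{thm:urylem} closed: an open enlargement $U_j$ cannot be used directly in later obstruction sets $G$, since intersections mixing open and closed sets are neither $\Sigma^0_1$ nor $\Pi^0_1$ in general, and the genuine closure $\cl{U_j}$ is not computable from $U_j$. The device of carrying the computable closed overapproximation $\hat U_j=\{x\setconstr f_j(x)\leq 1/2\}\supseteq\cl{U_j}$ resolves this, at the cost of verifying that the order bound is preserved for the $\hat U_j$ rather than merely for the $U_j$. The remaining care is to confirm that finite unions of closed sets and the two sublevel-set operations are uniformly computable in the present framework.
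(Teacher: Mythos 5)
Your proof is correct and follows essentially the same route as the paper's: your obstruction set $G$ is precisely the paper's $S_n$, your pair $U_i=\{x\setconstr f_i(x)<1/2\}$, $\hat U_i=\{x\setconstr f_i(x)\leq 1/2\}$ is exactly the paper's $U_i=f_i^{-1}[0,2^{-1})$, $K_i=f_i^{-1}[0,2^{-1}]$, and both arguments run the same one-index-at-a-time induction through the effective Urysohn lemma, with the $k\leq N+1$ case handled by an empty obstruction set. The only difference is presentational: the paper spells out the realiser-level bookkeeping (fixed realisers of the operation computing $S_n$ and of Urysohn's $U$, combined by dovetailing), which you summarise as a length-$k$ composition of computable operations.
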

\begin{proof}
Assuming $(F_i)_{i<k}\in\dom S_{+,N}$, we first deal with the case $k\geq N+2$.
Inductively in $n<k$, assume $f_i\in\Cont(X,[0,1])$ has $F_i\subseteq f_i^{-1}\{0\}$ and $K_i:=f_i^{-1}[0,2^{-1}]$ for each $i<n$.  We also assume $(F^{(n)}_i)_{i<k}$ is of order at most $N+1$ and $F_i\subseteq F^{(n)}_i$
for all $i<k$ where
$F^{(n)}_i:=(K_i,\text{ if $i<n$;}\: F_i,\text{ if $n\leq i<k$})$ ($i<k$).
Then
\begin{equation*}
S_n := \bigcup\{\bigcap_{j\leq N}F^{(n)}_{w_j}\setconstr w\in [0,k)^{N+2} \text{ injective with }w_{N+1}=n\}
\end{equation*}
is closed and disjoint from $F^{(n)}_n=F_n$.  By Urysohn's lemma there exists continuous $f_n:X\to [0,1]$
such that $F_n\subseteq f_n^{-1}\{0\}$ and $S_n\subseteq f_n^{-1}\{1\}$.  Defining $K_n$ and $(F^{(n+1)}_i)_{i<k}$ as
above, we have
$F^{(n)}_i\subseteq F^{(n+1)}_i$ for each $i<k$, and for
$w\in [0,k)^{*}$ injective with $\absval{w}\geq N+2$ we have
\begin{equation*}
\bigcap_{j<\absval{w}}F^{(n+1)}_{w_j} =
\begin{cases} \bigcap_{j<\absval{w}}F^{(n)}_{w_j}=\emptyset, &\mbox{\textrm{ if $(\forall j<\absval{w})(w_j\neq n)$}}\\
K_n\cap\bigcap_{j\neq j'<\absval{w}}F^{(n)}_{w_{j'}}, &\mbox{\textrm{ if $w_j=n$}}\end{cases} \subseteq K_n\cap S_n=\emptyset.
\end{equation*}

In step $n=k-1$ of the above induction we get $F^{(n+1)}_i=K_i$ for all $i<k$ and any injective $w\in [0,k)^{*}$
with $\absval{w}\geq N+2$ satisfies $\bigcap_{j<\absval{w}}K_{w_j}=\emptyset$.  But then for
$U_i:=f_i^{-1}[0,2^{-1})\subseteq K_i$ ($i<k$) it is clear $(U_i)_{i<k}\in S_{+,N}((F_i)_{i<k})$.
Furthermore it is clear how to obtain $\delta_{\Sigma^0_1}^{*}$-information on $(U_i)_{i<k}$.
Namely, let $F$ and $G$ be fixed computable realisers of the operations
\begin{equation*}
T^{(N)}:\subseteq\Pi^0_1(X)^{*}\times\Cont(X,\R)^{*}\to\Pi^0_1(X),((F_i)_{i<k},(f_i)_{i<n})\mapsto S_n
\end{equation*}
($\dom T^{(N)}=\{((F_i)_{i<k},(f_i)_{i<n})\setconstr n\leq k\}$) and $U:\subseteq\Pi^0_1(X)^2\rightrightarrows\Cont(X,\R)$
(from \pref{thm:urylem}), and $p=k.\langle p^{(0)},\dots,p^{(k-1)},0^{\ordomega},0^{\ordomega},\dots\rangle\in
(\delta_{\Sigma^0_1}^{*})^{-1}\{(F_i)_{i<k}\}$.  Then
\begin{equation*}
q^{(n)}:=G\langle p^{(n)},
  F\langle p,n.\langle q^{(0)},\dots,q^{(n-1)},0^{\ordomega},0^{\ordomega},\dots\rangle\rangle
 \rangle \onespace \text{($0\leq n<k$)}
\end{equation*}
are $[\delta_X\to\rho]$-names of respective $f_n$, uniformly computable from the
inputs; computability here is a matter of appropriate dovetailing.
Note for the case $k\leq N+1$ the same argument works (with $S_i=\emptyset$ for all $i<k$); in any case,
checking $(U_i)_{i<k}$ have order at most $N+1$ becomes trivial.  This completes the proof.
\end{proof}

\begin{prop}\label{prn:shrinkingconstrtwo}
For any computable metric space $X$, the operations
\begin{align*}
S_{-}: &\subseteq\Sigma^0_1(X)^{*}\crsarr\Pi^0_1(X)^{*},
(U_i)_{i<k}\mapsto \{(F_i)_{i<k}\setconstr (\forall i)(F_i\subseteq U_i)\wedge\textstyle\bigcup_i F_i=X\}\\
T: &\subseteq\Pi^0_1(X)^{*}\crsarr\Sigma^0_1(X)^{*},
(B_i)_{i<k}\mapsto \{(U_i)_{i<k}\setconstr \textstyle\bigcap_i U_i=\emptyset\wedge (\forall i)B_i\subseteq U_i\}
\end{align*}
($\dom S_{-}=\{(U_i)_{i<k}\setconstr \bigcup_i U_i=X\}$,
$\dom T=\{(B_i)_{i<k}\setconstr \bigcap_i B_i=\emptyset\}$) are
resp.~$(\delta_{\Sigma^0_1}^{*},\delta_{\Pi^0_1}^{*})$- and
$(\delta_{\Pi^0_1}^{*},\delta_{\Sigma^0_1}^{*})$-computable.
\end{prop}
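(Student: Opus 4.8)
The plan is to prove the two parts separately, reducing each to the swelling construction of \pref{lem:swellbddord} applied with the order bound $N+1$ set appropriately. Both operations turn ``the family $(A_i)_i$ is a cover (resp.\ has empty total intersection)'' into an order condition, so the key observation is that an order bound is exactly what \pref{lem:swellbddord} preserves while swelling closed sets to open ones (or, dually, what lets us shrink).

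For $S_{-}$, given an open cover $(U_i)_{i<k}$ of $X$ with $\bigcup_i U_i = X$, I would first produce a closed shrinking by the standard route: apply \pref{prn:shrinkingconstrtwo}'s predecessor machinery, i.e.\ use Urysohn's lemma (\pref{thm:urylem}) to obtain, for the disjoint pair $(X\setminus U_i, \text{complement of a slightly smaller open set})$, functions whose sublevel sets give closed $F_i \subseteq U_i$ that still cover $X$. Concretely, I would build the $F_i$ one index at a time, at stage $i$ replacing $U_i$ by a closed $F_i\subseteq U_i$ chosen so that $F_0\cup\cdots\cup F_i\cup U_{i+1}\cup\cdots\cup U_{k-1}$ is still all of $X$; this is possible because the set not yet covered by the $U_j$ with $j>i$ is a closed subset of $U_i$, and Urysohn separates it from $X\setminus U_i$. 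The computability is then a matter of the same dovetailing as in \pref{lem:swellbddord}: fix computable realisers of $U$ from \pref{thm:urylem} and of the auxiliary ``uncovered remainder'' operation, and feed the stage-$i$ outputs back into stage $i+1$.

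For $T$, given closed $(B_i)_{i<k}$ with $\bigcap_i B_i = \emptyset$, the family has order $\leq k-1$, so in particular order $\leq (k-1)$; I would invoke \pref{lem:swellbddord} with $N+1 = k-1$ (i.e.\ $N=k-2$) to obtain an open swelling $(U_i)_{i<k}$ with $B_i\subseteq U_i$ and $(U_i)_i$ again of order $\leq k-1$, which forces $\bigcap_{i<k} U_i = \emptyset$ since the full $k$-fold intersection exceeds the allowed order. This is the most direct part: $T$ is essentially a renaming of $S_{+,k-2}$ restricted to the hypothesis $\bigcap_i B_i=\emptyset$, and the $(\delta_{\Pi^0_1}^{*},\delta_{\Sigma^0_1}^{*})$-computability is inherited verbatim from that lemma once $N$ is read off from the input length $k$ (available from the $\delta^{*}$-name).

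The main obstacle I expect is the $S_{-}$ direction, specifically arranging the closed shrinking computably while maintaining the covering property at every stage. The delicate point is that ``$F_0\cup\cdots\cup F_i\cup U_{i+1}\cup\cdots\cup U_{k-1}=X$'' involves a mix of closed and open sets, and I need the uncovered remainder $X\setminus(U_{i+1}\cup\cdots\cup U_{k-1})$, a closed set inside $U_i$, to be obtained as a $\delta_{\Pi^0_1}$-name uniformly so that Urysohn's lemma applies; I would handle this by noting that the union of the $U_j$ ($j>i$) together with the already-fixed $F_j$ ($j<i$) is positively $\delta_{\Sigma^0_1}$-computable, so its complement carries a $\delta_{\Pi^0_1}$-name, and the disjointness from $X\setminus U_i$ follows from the invariant. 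Everything else is routine dovetailing of the fixed realisers exactly as in the proof of \pref{lem:swellbddord}.
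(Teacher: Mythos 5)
Your treatment of $T$ is correct and takes a genuinely different route from the paper's: since a family of exactly $k$ closed sets satisfies $\bigcap_{i<k}B_i=\emptyset$ if and only if it has order at most $k-1$, the operation $T$ on length-$k$ inputs coincides with $S_{+,k-2}$, and the construction proving \pref{lem:swellbddord} is (after an easy check) uniform in $N$, so $N=k-2$ can be read off the length recorded in the $\delta_{\Pi^0_1}^{*}$-name; the cases $k\leq 1$ are handled trivially. The paper instead proves $T$ directly by the same style of Urysohn induction but with the full intersections $S_n=\bigcap_{j\neq n}B^{(n)}_j$, which avoids the order-$N$ machinery altogether; your reduction is sound but uses a strictly heavier tool for the same effect.

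The $S_{-}$ half, however, has a genuine gap. You keep the closed sets $F_j$ in your covering invariant and claim that $F_0\cup\dots\cup F_{i-1}\cup U_{i+1}\cup\dots\cup U_{k-1}$ is ``positively $\delta_{\Sigma^0_1}$-computable''. It is not: a union of closed and open sets is in general not open, so it has no $\delta_{\Sigma^0_1}$-name at all, and the $F_j$ are available only through negative ($\delta_{\Pi^0_1}$) information, from which positive information cannot be computed. Consequently your ``uncovered remainder'' is not closed either, so \pref{thm:urylem} cannot be applied to it. The standard repair is to carry, along with each $F_j=f_j^{-1}[0,2^{-1}]$, the open set $V_j=f_j^{-1}[0,2^{-1})\subseteq F_j$ (whose $\delta_{\Sigma^0_1}$-name is computable from the $[\delta_X\to\rho]$-name of the Urysohn function $f_j$) and to maintain the stronger invariant $V_0\cup\dots\cup V_{i-1}\cup U_i\cup\dots\cup U_{k-1}=X$; then the remainder $X\setminus(V_0\cup\dots\cup V_{i-1}\cup U_{i+1}\cup\dots\cup U_{k-1})$ is closed, carries a computable $\delta_{\Pi^0_1}$-name, and lies inside $U_i$, so Urysohn applies, and at the end the sets $F_j\supseteq V_j$ form the desired closed cover. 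Alternatively---and this is what the paper does---$S_{-}$ needs no construction at all once $T$ is proved: a $\delta_{\Pi^0_1}$-name is by definition a $\delta_{\Sigma^0_1}$-name of the complement, $\dom S_{-}=\{(X\setminus B_i)_i\setconstr (B_i)_i\in\dom T\}$, and $(F_i)_i\in S_{-}((U_i)_i)$ iff $(X\setminus F_i)_i\in T((X\setminus U_i)_i)$, so computability of $S_{-}$ follows from that of $T$ simply by complementing names.
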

\begin{proof}
Inductively in $n\leq k$, suppose
$(B^{(n)}_i)_{i<k}$, $f_i\in\Cont(X,[0,1])$ and $K_i:=f_i^{-1}[0,2^{-1}]$ ($i<n$) are such that
$B^{(n)}_i=(K_i,\textrm{ if $i<n$; }B_i,\textrm{ if $n\leq i<k$})$.  We additionally suppose that $f_i$
satisfy $B_i\subseteq f_i^{-1}\{0\}$ for all $i<n$, and that $\bigcap_{i<k}B^{(n)}_i=\emptyset$.

Then $S_n:=\bigcap_{j\in [0,k)\setminus\{n\}}B^{(n)}_j$ is closed and disjoint from $B_n$ ($=B^{(n)}_n$).  By Urysohn's lemma,
there exists continuous $f_n:X\to [0,1]$ such that $B_n\subseteq f_n^{-1}\{0\}$ and $S_n\subseteq f_n^{-1}\{1\}$.  Defining $K_n$,
$(B^{(n+1)}_i)_{i<k}$ as above in the case $n+1\leq k$, we have $B^{(n)}_i\subseteq B^{(n+1)}_i$ for all $i<k$
with
\begin{equation*}
\textstyle
\bigcap_{i<k}B^{(n+1)}_i = \bigcap_{i\leq n}K_i\cap\bigcap_{n<i<k}B_i =
\bigcap_{n\neq i<k}B^{(n)}_i\cap K_n = S_n\cap K_n = \emptyset.
\end{equation*}

By step $k$ of this induction, there exist $f_i$ such that $B_i\subseteq f_i^{-1}\{0\}$ and $K_i:=f_i^{-1}[0,2^{-1}]$
($i<k$) satisfy
$B_i\subseteq K_i$ for all $i$ and $\bigcap_i K_i=\emptyset$.  Writing $U_i:=f_i^{-1}[0,2^{-1})$ ($i<k$) we now
have $B_i\subseteq U_i\subseteq K_i$ for all $i<k$ and $\bigcap_i U_i=\emptyset$.
This establishes the computability of $T$.  Then $\dom S_{-}=\{(X\setminus B_i)_i\setconstr (B_i)_i\in\dom T\}$,
and also $(F_i)_i\in S_{-}((U_i)_i)$ iff $(X\setminus F_i)_i\in T((X\setminus U_i)_i)$.
\end{proof}

\section{Zero dimensional subsets}\label{sec:zdsubsets}
For a computable metric space $X$ and a class $\mathcal{Y}\subseteq\pwrset(X)$ of zero-dimensional or empty
subsets with $\absval{\mathcal{Y}}\leq 2^{\aleph_0}$, what information should be included (or more abstract requirements made) when specifying a representation $\delta_{\mathcal{Y}}$ of $\mathcal{Y}$?  Loosely speaking,
we would like effective versions of certain theorems concerning zero-dimensionality to hold,
without requiring `unrealistically' strong information on inputs.  While we are here far from an exposition
that would satisfactorily answer this open-ended problem, it seems a reasonable place to start is from the
definition of zero-dimensionality as presented in \pref{sec:intro}.
Specifically, as effectivisations of (\ref{zdvi}), (\ref{zdviii}), (\ref{zdii}), (\ref{zdiii}) which also depend on the subspace $Y$ in place of $X$ we (for
given $X$, $\mathcal{Y}$, $\delta_{\mathcal{Y}}$) consider computability of respective operations
$B$, $S$, $M$, $N$, defined as below.  For brevity, in case of the binary disjoint union of two sets, we often
write ``$E=C\disjunion D$" in place of ``$C\cap D=\emptyset$ and $E=C\union D$".
\begin{align*}
& B:\mathcal{Y}\crsarr(\Sigma^0_1(X)^2)^{\N}\times(\N^2)^{\N}, \onespace
S:\subseteq\Sigma^0_1(X)^{\N}\times\mathcal{Y}\crsarr\Sigma^0_1(X)^{\N},\\
& M:\subseteq X\times\Sigma^0_1(X)\times\mathcal{Y}\crsarr\Sigma^0_1(X)^2,\onespace
N:\subseteq\Pi^0_1(X)^2\times\mathcal{Y}\crsarr\Sigma^0_1(X)^2
\end{align*}
with
$\dom S=\{((U_i)_i,Y)\setconstr\bigcup_i U_i\supseteq Y\}$,
$\dom M=\{(x,U,Y)\setconstr x\in U\}$,
$\dom N=\{(A,B,Y)\setconstr A\cap B=\emptyset\}$, and
\begin{align*}
B(Y) &:=\big\{((U_i,V_i)_i,(a_i,b_i)_i)\setconstr (U_i)_i \text{ a basis for $\mathcal{T}_X$, }\:
(\forall i)Y\subseteq U_i\disjunion V_i \text{ and }\\
& \{(a_k,b_k)\setconstr k\in\N\}\text{ refined inclusion of $(U_i)_i \mathwrt \alpha$}\big\},\\
S((V_i)_i,Y) &:=\{(W_i)_i\setconstr (\forall i)W_i\subseteq V_i\wedge
\textstyle\bigcup_i W_i\supseteq Y \text{ and $(W_i)_i$ pairwise disjoint}\},\\
M(x,U,Y) &:=\{(V,W)\setconstr x\in V\subseteq U\wedge Y\subseteq V\disjunion W\},\\
N(A,B,Y) &:= \{(U,V)\setconstr A\subseteq U\wedge B\subseteq V\wedge Y\subseteq U\disjunion V\}
\end{align*}
\begin{prop}\label{prn:zdsubsets}
Let $X$ be a computable metric space and $\mathcal{Y}\subseteq\pwrset(X)$ a class of zero-dimensional or empty
subsets with representation
$\delta_{\mathcal{Y}}$.  Then \emph{(\ref{zdeffvii})}$\implies$\emph{(\ref{zdeffiii})}$\implies$\emph{(\ref{zdeffi})}
$\implies$\emph{(\ref{zdeffivprime})}.
\begin{enumerate}[label=(\roman*)]
\item\label{zdeffvii} $N:\subseteq\Pi^0_1(X)^2\times\mathcal{Y}\crsarr\Sigma^0_1(X)^2$ is computable.
\item\label{zdeffiii} $M:\subseteq X\times\Sigma^0_1(X)\times\mathcal{Y}\crsarr\Sigma^0_1(X)^2$ is computable.
\item\label{zdeffi} $B:\mathcal{Y}\crsarr(\Sigma^0_1(X)^2)^{\N}\times(\N^2)^{\N}$ is computable.
\item\label{zdeffivprime} $S:\subseteq\Sigma^0_1(X)^{\N}\times\mathcal{Y}\crsarr\Sigma^0_1(X)^{\N}$ is computable.
\end{enumerate}
\end{prop}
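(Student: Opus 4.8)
My plan is to prove the three implications separately, in increasing order of difficulty; the middle one, $(\ref{zdeffiii})\implies(\ref{zdeffi})$, is where the real work lies.

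For $(\ref{zdeffvii})\implies(\ref{zdeffiii})$ I would reduce separation of a point to separation of two closed sets. Given $(x,U,Y)$ with $x\in U$, I first read off from the $\delta_{\Sigma^0_1}$-name of $U$ an ideal ball $\alpha(b)=\ballmetric{d}{\nu(c)}{\nu_{\Qplus}(r)}\ni x$ with $\alpha(b)\subseteq U$, and compute a rational $q>0$ with $d(x,\nu(c))+q<\nu_{\Qplus}(r)$, so that $\clball{x}{q}\subseteq\alpha(b)\subseteq U$. The closed ball $\clball{x}{q}$ has a $\delta_{\Pi^0_1}$-name computable from the $\delta_X$-name of $x$, and $X\setminus U$ is $\delta_{\Pi^0_1}$-computable from $U$; these two closed sets are disjoint, so I may apply $N$ to $(\clball{x}{q},\,X\setminus U,\,Y)$ to get $(U',V')$ with $\clball{x}{q}\subseteq U'$, $X\setminus U\subseteq V'$ and $Y\subseteq U'\disjunion V'$. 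Putting $(V,W):=(U',V')$ gives $x\in\clball{x}{q}\subseteq U'=V$, while $U'\cap V'=\emptyset$ and $V'\supseteq X\setminus U$ force $V=U'\subseteq U$; hence $(V,W)\in M(x,U,Y)$, and all steps are effective.

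For $(\ref{zdeffi})\implies(\ref{zdeffivprime})$ the point is a disjointification that uses the second components produced by $B$. Writing $B(Y)=((B_j,B'_j)_j,(a_k,b_k)_k)$ and letting $(G_i)_i$ be the input cover, for each member $\alpha(g)$ of the $\delta_{\Sigma^0_1}$-name of some $G_i$ and each $j$ with $(j,g)$ in the refined inclusion I record $B_j\subseteq\alpha(g)\subseteq G_i$; the first axiom of \pref{def:incldef} guarantees that these $B_j$ cover $Y$ (each $y\in Y\cap\alpha(g)$ lies in such a $B_j$), yielding a clopen-in-$Y$ refinement $(B_{j_n})_n$ of $(G_i)_i$ with an index assignment $\iota$. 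I then set $W_n:=B_{j_n}\cap\bigcap_{m<n}B'_{j_m}$, which is open in $X$; since $B'_{j_m}\cap Y=Y\setminus B_{j_m}$ and $B_{j_m}\cap B'_{j_m}=\emptyset$, the $W_n$ are pairwise disjoint in $X$ and $(W_n\cap Y)_n$ is the disjointification of $(B_{j_n}\cap Y)_n$, so it still covers $Y$. Regrouping $W^{(i)}:=\bigcup\{W_n\setconstr\iota(n)=i\}$ produces $(W^{(i)})_i\in S((G_i)_i,Y)$.

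The crux is $(\ref{zdeffiii})\implies(\ref{zdeffi})$, and the obstacle is that $M$ only lets me separate the ideal points I can feed it, whereas a basis of $X$ must reach every point. My plan is, for each ideal ball $\alpha(b)$, to apply $M$ along the ideal points $\nu(i)\in\alpha(b)$ to obtain clopen-in-$Y$, open-in-$X$ sets $u_b(\nu(i))\ni\nu(i)$ with $u_b(\nu(i))\subseteq\alpha(b)$, and to collect all of these as the candidate family $(U_j)_j$, reading the refined-inclusion pairs $(a_j,b_j)$ off the guaranteed containment $U_j\subseteq\alpha(b)$ (extracting from the $\delta_{\Sigma^0_1}$-name of $U_j$ an ideal ball to certify the formal inclusion). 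A fixed family of clopen neighbourhoods of ideal points need not contain a prescribed non-ideal point, and this is precisely where I expect \pref{lem:ctslindelof} to enter: the assignment $y\mapsto u_b(y)$ is representation-continuous on the subspace $\alpha(b)$ and satisfies $y\in u_b(y)$ for \emph{every} $y\in\alpha(b)$ (not only $y\in Y$), so the lemma gives $\bigcup_i u_b(\nu(i))=\bigcup_{y\in\alpha(b)}u_b(y)\supseteq\alpha(b)$. Thus every $x\in\alpha(b)$ lies in some $u_b(\nu(i))\subseteq\alpha(b)$, which is exactly the first axiom of \pref{def:incldef}, making $(U_j)$ a genuine basis of $X$ each of whose members is clopen in $Y$. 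The delicate part I anticipate is justifying the hypotheses of \pref{lem:ctslindelof} here — fixing a realiser of $M$ so that $u_b$ is a bona fide $(\delta_Z,\delta_{\Sigma^0_1(X)})$-continuous map and handling the bookkeeping across all $\alpha(b)$ — since it is precisely continuity/extensionality of the neighbourhood assignment that rules out the ``adversarial'' families which would otherwise fail to cover the non-ideal points.
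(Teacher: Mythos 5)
Your reductions (\ref{zdeffvii})$\implies$(\ref{zdeffiii}) and (\ref{zdeffi})$\implies$(\ref{zdeffivprime}) are correct and essentially the paper's own arguments: the former differs only in feeding $N$ the closed ball $\clball{x}{q}$ instead of the singleton $\{x\}$ (both admit $\delta_{\Pi^0_1}$-names computable from a name of $x$, and the conclusion is read off identically), and the latter is exactly the paper's effectivisation of Kuratowski's reduction, with your minimal-index covering argument in place of the paper's induction (\ref{eq:relredind}).

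The genuine gap is in (\ref{zdeffiii})$\implies$(\ref{zdeffi}), at precisely the point you flag but leave open, and the repair you anticipate is not available. The hypothesis supplies only a computable realiser $G$ of the multi-valued $M$, and the induced assignment $u_b$ lives on \emph{names}: different $\delta_X$-names of the same point may yield different open sets, so $u_b$ is not a function on $\alpha(b)$, hence not an element of $\Cont_{\textrm{s}}(\delta_{\alpha(b)},\delta_{\Sigma^0_1(X)})$, and \pref{lem:ctslindelof} cannot be invoked as stated. ``Fixing a realiser of $M$ so that $u_b$ is a bona fide continuous map'' would amount to producing a representation-continuous single-valued selection of $M$; computability of $M$ does not provide one, and in general none exists --- this is the whole reason these operations are multi-valued. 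Nor does the weaker reading --- one set $u_b(\nu(i))$ per ideal point, computed from a fixed canonical name of $\nu(i)$ --- survive: it is actually false, not merely unjustified. Continuity of $G$ forces, for $x\in\alpha(b)$ with name $q$, that some prefix $w\sqsubset q$ already commits an output ball containing $x$, and this commitment is inherited by all names extending $w$, in particular by the name $w.w^{\ordomega}_{\absval{w}-1}$ of an ideal point --- but \emph{not} by the canonical name of that, or any other, ideal point. Concretely, with $X=Y=\{0,1\}^{\N}$ one can build a correct continuous realiser of $M$ which, on each constant name of an ideal point $w0^{\ordomega}$, outputs a clopen set consisting only of sequences with a long run of $0$'s after $w$, and which behaves ``honestly'' only once the input name is seen to be non-constant; the union of its outputs over all ideal points then omits $111\cdots$, so the resulting family is not a basis and axiom (\ref{inclone}) of \pref{def:incldef} fails for it.

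The correct repair is to run your construction at the level of names, which is what the paper's proof does: index the candidate basis not by ideal points but by finite words $w$ --- equivalently by the names $w.w^{\ordomega}_{\absval{w}-1}$, the sequence $(p^{(i)})_i$ of the paper --- and establish the refined-inclusion axiom by the realiser-continuity argument just described (the paper's \pref{pry:refincl}); that is, one uses the \emph{proof} of \pref{lem:ctslindelof} applied to the name-indexed assignment, not its statement applied to a point-indexed family. With that change your per-ideal-ball organisation, with the formal inclusion recorded by construction (no extraction from the name of $U_j$ is needed, since $U_j\subseteq\alpha(b)$ is guaranteed by the definition of $M$), does go through, and it differs from the paper's arrangement --- which uses the balls $\ball{x}{2^{-j}}$ via $M^{\circ}$ and a metrically defined relation $\sqsubset'$ --- only cosmetically.
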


\begin{proof}\hfill

\noindent{(\ref{zdeffvii})$\implies$(\ref{zdeffiii}):}
If $(x,U,Y)\in\dom M$ then $(\{x\},X\setminus U,Y)\in\dom N$ and for any $(V,W)\in N(\{x\},X\setminus U,Y)$
it holds that $x\in V\subseteq X\setminus W\subseteq U$ and $Y\subseteq V\union W$
(equivalently, $(V,W)\in M(x,U,Y)$ and $X\setminus U\subseteq W$).
\newline

\noindent{(\ref{zdeffiii})$\implies$(\ref{zdeffi}):}
Consider $M^{\circ}:X\times\N\times\mathcal{Y}\crsarr\Sigma^0_1(X)^2$ defined by
\begin{equation*}
M^{\circ}(x,i,Y)=M(x,\ball{x}{2^{-i}},Y)=\{(V,W)\setconstr x\in V\subseteq\ball{x}{2^{-i}}\wedge Y\subseteq V\disjunion W\}.
\end{equation*}
If $\delta=\delta_X$ is Cauchy representation of $X$, let $G:\subseteq\Bairespc^3\to\Bairespc$ be a computable
$(\delta,\delta_{\N},\delta_{\mathcal{Y}};\delta_{\Sigma^0_1}^2)$-realiser of $M^{\circ}$,
$Z:=\dom\delta$ and
\begin{equation*}
u^{(j)}_q :=(\delta_{\Sigma^0_1}\compose\pr_1\compose G)(\cdot,j.0^{\ordomega},q):Z\to\Sigma^0_1(X)
\onespace \textrm{ ($j\in\N$, $q\in\dom\delta_{\mathcal{Y}}$).}
\end{equation*}
We now can apply \pref{lem:ctslindelof} to $u^{(j)}_q$ (with $(p^{(i)})_i$ a standard enumeration of
$\{w.w^{\ordomega}_{\absval{w}-1}\setconstr w\in A\}\subseteq Z$ for $A$ as in proof of the lemma), obtaining
$\bigcup_{i\in\N}u^{(j)}_q(p^{(i)})=X$ for every $j\in\N$, $q\in\dom\delta_{\mathcal{Y}}$.
If we denote
\begin{equation*}
 B_1:\N\times\Bairespc\to\Sigma^0_1(X)^2,(\langle i,j\rangle,q)\mapsto
(\delta_{\Sigma^0_1}^2\compose G)(p^{(i)},j.0^{\ordomega},q)
\end{equation*}
and $b^{\prime}:=\pr_1\compose B_1:\N\times\Bairespc\to\Sigma^0_1(X)$
then one can check each $b^{\prime}(\cdot,q)$ is a basis numbering.

Next, define
\begin{equation*}
\langle i,j\rangle\sqsubset'\langle n,r\rangle :\iff d(\delta(p^{(i)}),\nu(n))+2^{-j}<\nu_{\Qplus}(r).
\end{equation*}
We show
\begin{property}\label{pry:refincl}
$(\sqsubset')\subseteq\N^2$ is a c.e.~refined inclusion of $b^{\prime}(\cdot,q)\mathwrt\alpha$.
\end{property}
\noindent\textbf{Proof of \pref{pry:refincl}:}
\begin{equation*}
b^{\prime}(\langle i,j\rangle,q) = u^{(j)}_q(p^{(i)})
= (\pr_1\compose\delta_{\Sigma^0_1}^2\compose G)(p^{(i)},j.0^{\ordomega},q) \in
(\pr_1\compose M^{\circ})(\delta(p^{(i)}),j,\delta_{\mathcal{Y}}(q))
\end{equation*}
implies $\delta(p^{(i)})\in b^{\prime}(\langle i,j\rangle,q) \subseteq \ball{\delta(p^{(i)})}{2^{-j}}$, where
the latter set is included in $\alpha\langle n,r\rangle$ if $\langle i,j\rangle\sqsubset'\langle n,r\rangle$.

Secondly, for $p\in\Bairespc$ and $N\in\N$ let $p^N$ denote the prefix $p_0\dots p_{N-1}$ of $p$.  We let
$s\in\N$, $y:=\nu(\pr_1 s)$, $r:=\nu_{\Qplus}(\pr_2 s)$, $x\in\alpha(s)$, $p\in\delta^{-1}\{x\}$ and define
\begin{equation*}
H^{(j)}_q:\subseteq\Bairespc\to\Bairespc,p\mapsto (\pr_1\compose G)(p,j.0^{\ordomega},q).
\end{equation*}
$H^{(j)}_q$ is a continuous
$(\id_{\Bairespc}|^Z,\delta_{\Sigma^0_1})$-realiser of $u^{(j)}_q$ ($q\in\dom\delta_{\mathcal{Y}}$).  Fix $j$
with $d(x,y)+2^{-j}<r$, $l\in\N$ with
$H^{(j)}_q(p)_l\geq 1$ and $x\in\alpha\left(H^{(j)}_q(p)_l-1\right)$ and
$N\in\N$ with $H^{(j)}_q(\dom\delta\cap p^N.\Bairespc)\subseteq H^{(j)}_q(p)^{l+1}.\Bairespc$.  Any
$p'\in\dom\delta\cap p^N.\Bairespc$ satisfies
\begin{equation*}
u^{(j)}_q(p') = (\delta_{\Sigma^0_1}\compose H^{(j)}_q)(p') \supseteq
\bigcup\{\alpha\left(H^{(j)}_q(p)_{l'}-1 \right)\setconstr l'\leq l\wedge H^{(j)}_q(p)_{l'}\geq 1\}
\end{equation*}
where the last set contains the point $x$.
By density of $(p^{(i)})_i\subseteq\dom\delta$,
pick $i$ with $p^{(i)}\in\dom\delta\cap p^N.\Bairespc$ and $d(\delta(p^{(i)}),y)+2^{-j}<r$.  Then
\begin{equation*}
x\in u^{(j)}_q(p^{(i)}) = b^{\prime}(\langle i,j\rangle,q) \:\text{ ($\subseteq\ball{\delta(p^{(i)})}{2^{-j}}$)}
\end{equation*}
and $\langle i,j\rangle\sqsubset' s$.  This completes the proof of \pref{pry:refincl}.

Finally we show $B:\mathcal{Y}\crsarr(\Sigma^0_1(X)^2)^{\N}\times(\N^2)^{\N}$ is computable.  Fix $h\in\trfns$
such that $\img h=\{\langle a,b\rangle\setconstr a\sqsubset' b\}$ and consider as a realiser the map
$I:\subseteq\Bairespc\to\Bairespc$ defined by
\begin{equation*}
I(q):=\langle \langle r^{(0)},r^{(1)},\dots\rangle,
             \langle s^{(0)},s^{(1)},\dots\rangle\rangle
\end{equation*}
where $r^{(\langle i,j\rangle)}=G(p^{(i)},j.0^{\ordomega},q)$ and
$s^{(k)}=\langle\pr_1 h(k).0^{\ordomega},\pr_2 h(k).0^{\ordomega}\rangle$ ($i,j,k\in\N$).  That is, take
$(U_i,V_i):=(\delta_{\Sigma^0_1}^2\compose G)(p^{(\pr_1 i)},\pr_2 i.0^{\ordomega},q)=B_1(i,q)$ and
$\langle a_i,b_i\rangle:=h(i)$ for each $i$, so
$(U_i)_i$ gives the basis numbering $b^{\prime}(\cdot,q)$ and $(a_i,b_i)_i$ gives the relation
$\sqsubset'$ independent of $q$.

For a fixed $q\in\dom\delta_{\mathcal{Y}}$, observe
$(U_i,V_i)\in M^{\circ}(\delta(p^{(\pr_1 i)}),\pr_2 i,\delta_{\mathcal{Y}}(q))$ implies
$\delta(p^{(\pr_1 i)})\in U_i\subseteq\ball{\delta(p^{(\pr_1 i)})}{2^{-\pr_2 i}}$ and
$\delta_{\mathcal{Y}}(q)\subseteq U_i\disjunion V_i$.
Then $((U_i,V_i)_i,(a_i,b_i)_i)\in (B\compose\delta_{\mathcal{Y}})(q)$ trivially.
\newline

\noindent{(\ref{zdeffi})$\implies$(\ref{zdeffivprime}):}
This proof derives from \cite[\S 26.II, Thm 1]{Kuratowski}.
Assume we are given $((V_i)_i,Y)\in\dom S$, $((T_i,U_i)_{i\in\N},(a_k,b_k)_k)\in B(Y)$
and $\langle p^{(0)},p^{(1)},\dots\rangle \in (\delta_{\Sigma^0_1(X)}^{\ordomega})^{-1}\{(V_i)_{i\in\N}\}$.
For each $i$ enumerate
$(0 \text{ for each } j \mathst p^{(i)}_j=0; a_k+1 \text{ for any }j,k \mathst p^{(i)}_j=b_k+1)$.  By definition of $B$, $\sqsubset'$
defined in $(a_k,b_k)_k$ is a refined inclusion of $(T_i)_i$ with respect to $\alpha$,
so for any $b\in\N$ and $x\in X$ there exists $k\in\N$ such that $x\in\alpha(b)$ implies $x\in T_{a_k}$ and $b_k=b$.
Since
$\emptyset\neq\img\alpha\not\ni\emptyset$, this implies $\{b_k\setconstr k\in\N\}=\dom\alpha=\N$, so output is
infinite for each $i$ --- say this output is $q^{(i)}\in\Bairespc$.

Now, let $(T_{i,j},U_{i,j}):=\begin{cases}
(\emptyset,\emptyset), &\mbox{\textrm{ if $q^{(i)}_j=0$}}\\
(T_a,U_a), &\mbox{\textrm{ if $q^{(i)}_j=a+1$}}\end{cases}$ ($j\in\N$).  We have
$(\forall i)V_i=\bigcup_j T_{i,j}$.
Also
\begin{equation*}
W_{i,j}^{*}:=T_{i,j}\cap\bigcap_{\langle k,l\rangle<\langle i,j\rangle}U_{k,l} \subseteq
T_{i,j}\cap\bigcap_{\langle k,l\rangle<\langle i,j\rangle}(X\setminus T_{k,l})
= T_{i,j}\setminus\bigcup_{\langle k,l\rangle<\langle i,j\rangle}T_{k,l} \onespace\text{($i,j\in\N$)}
\end{equation*}
are pairwise disjoint with $\delta_{\Sigma^0_1}$-information available uniformly in $i$, $j$ and the inputs.
Then $W_i:=\bigcup_j W_{i,j}^{*}$ ($\subseteq V_i$, $i\in\N$) are pairwise disjoint with a
$\delta_{\Sigma^0_1}^{\ordomega}$-name of $(W_i)_i$ available.  Finally, any
$x\in (\bigcup_i V_i)\setminus(\bigcup_{i'}W_{i'})=(\bigcup_{i,j}T_{i,j})\setminus(\bigcup_{i,j}W_{i,j}^{*})$
has $x\not\in Y$ by an argument we now elaborate.  First, denote
$Z_k:=T_{\pr_1 k,\pr_2 k}$ and $Z_k^{*}:=W_{\pr_1 k,\pr_2 k}^{*}$ ($k\in\N$).
Then one can check
\begin{equation}\label{eq:relredind}
Y\cap\textstyle\bigcup_{k<l}Z_k\subseteq\bigcup_{k<l}Z_k^{*}
\end{equation}
inductively.  Namely, assume (\ref{eq:relredind}) for some
$l\in\N$ (this is trivially true for $l=0$).  Then
$Z_k^{*} = Z_k\cap\bigcap_{k'<k}U_{\pr_1 k',\pr_2 k'}\subseteq Z_k\setminus\bigcup_{k'<k}\cl{Z_{k'}}$ and
$(\forall k')Y\subseteq T_{k'}\union U_{k'}$ imply
\begin{equation*}
Y\cap Z_k\subseteq (Y\cap Z_k^{*})\union\textstyle\bigcup_{k'<k}(Y\setminus U_{\pr_1 k',\pr_2 k'}) \subseteq
Z_k^{*}\union\bigcup_{k'<k}(Y\cap Z_{k'})
\end{equation*}
for all $k\in\N$, in particular
\begin{equation*}
Y\cap\textstyle\bigcup_{k\leq l}Z_k\subseteq
\bigcup_{k\leq l}\left(Z_k^{*}\union\bigcup_{k'<k}(Y\cap Z_{k'})\right)
= \bigcup_{k\leq l}Z_k^{*}\union\bigcup_{k<l}(Y\cap Z_k)\subseteq \bigcup_{k\leq l}Z_k^{*}
\end{equation*}
by inductive assumption.  So, we established
$Y=Y\cap\bigcup_i V_i=Y\cap\bigcup_{i,j}T_{i,j}=Y\cap\bigcup_k Z_k \subseteq
\bigcup_k Z_k^{*}=\bigcup_{i,j}W_{i,j}^{*}=\bigcup_i W_i\subseteq\bigcup_i V_i$, and in particular $(W_i)_i$
is a cover of $Y$.  This proves computability of $S$.
\end{proof}

At least two implications in Proposition \ref{prn:zdsubsets} could be improved to results concerning
Weihrauch reducibility (\cite{BrattkaGherardiEffChoiceBdd}) between the mentioned operations.  If e.g.~each
operation
$M(\cdot,\cdot,Y)$ ($Y\in\mathcal{Y}$) is guaranteed to possess realisers of a given represented class, then a
corresponding enriched representation $\delta_{\mathcal{Y},M}$ can also be defined.  For the purposes of the
present paper, we do not study these notions further; in particular, we have not separated the conditions of computability for $N$, $M$, $B$, $S$.  We mainly consider a situation where
\pref{prn:zdsubsets} is applied to $Y=X$ (in \pref{sec:zdspcs} and thereafter in Sections \ref{sec:repnsec} and
\ref{sec:retractsec}).

\section{Zero-dimensional spaces}\label{sec:zdspcs}

Less broadly than in \pref{sec:zdsubsets}, one can ask what constitutes a useful nonuniform definition of effectively zero-dimensional computable metric space; more generally, this might be addressed for closed effectively separable subspaces.
In this paper we consider the problem for $Y=X$ only\footnote{The subspace case could subsequently be treated
following \pref{sec:dimsec} to an effectivisation of the theorem on closed subspaces
\cite[Thm 7.1.8]{Engelking}, but we will not do that here.}.
We consider computability of the following operations, again based on (\ref{zdii})-(\ref{zdviii}) in
\pref{sec:intro}:
\begin{align*}
& \tilde{S}=\tilde{S}^X :\Sigma^0_1(X)^{\N}\crsarr \Sigma^0_1(X)^{\N},\onespace
R=R^X :\subseteq\Sigma^0_1(X)^{\N}\to \Delta^0_1(X)^{\N},\\
& M :\subseteq X\times\Sigma^0_1(X)\crsarr \Delta^0_1(X),\onespace
N :\subseteq\Pi^0_1(X)^2\crsarr\Delta^0_1(X)
\end{align*}
with $\dom R^X=\{(V_i)_i\setconstr (V_i)_i\text{ pairwise disjoint with } \textstyle\bigcup_i V_i=X\}$,
$\dom M=\{(x,U)\setconstr x\in U\}$, $\dom N=\{(A,B)\setconstr A\cap B=\emptyset\}$,
$R^X((V_i)_i)=(V_i)_i$ and
\begin{align*}
\tilde{S}^X((U_i)_i) & =\{(W_i)_i\setconstr (W_i)_i
\text{ pairwise disjoint with $W_i\subseteq U_i$ and } \textstyle\bigcup_i W_i=\bigcup_i U_i\},\\
 M(x,U) &=\{W\in\Delta^0_1(X)\setconstr x\in W\subseteq U\},\\
N(A,B) &=\{W\setconstr A\subseteq W\wedge B\subseteq X\setminus W\}.
\end{align*}
Except for $R^X$ these operations are related to those defined in \pref{sec:zdsubsets}.  For instance,
label temporarily the new operation as $N'$ and suppose $X$ is zero-dimensional, with $\mathcal{Y}\ni X$ and
some computable $p\in\dom\delta_{\mathcal{Y}}$ such that $\delta_{\mathcal{Y}}(p)=X$.  Then
\begin{equation*}
N\text{ computable}\implies N(\cdot,\cdot,X)\text{ computable}\iff N'\text{ computable}.
\end{equation*}
If also $\mathcal{Y}=\{X\}$, we can derive full equivalence (using definition of product representations).
The situation is similar for the operations $M$, $B$ and $S$ (here compared to a
suitable restriction of $\tilde{S}^X$), e.g.~for $B$ this leads to the condition (\ref{effzdii}) in the
following
\begin{prop}\label{prn:zerodspc}
Let $X$ be a computable metric space.  Then the following conditions are equivalent:
\begin{enumerate}
\item\label{effzdii} There exist computable $b:\N\to\Delta^0_1(X)$ and c.e.~refined inclusion of
$b$ with respect to $\alpha$
such that $\mathcal{B}:=\img b$ is a basis for $\mathcal{T}_X$.
\item\label{effzdvii} The operation $N$ is computable.
\item\label{effzdviii} The operation $\cl{M}:\subseteq X\times\Pi^0_1(X)\crsarr\Delta^0_1(X),(x,A)\mapsto
N(\{x\},A)$ is computable, where $\dom\cl{M}=\{(x,A)\setconstr x\not\in A\}$.
\item\label{effzdiii} The operation $M$ is computable.

\item\label{effzdiv} The operation $\dot{C}^{\ordomega}:\subseteq\Sigma^0_1(X)^{\N}\crsarr
\Sigma^0_1(X)^{\N}\times\Bairespc$ is computable, where
\begin{equation*}
\dot{C}^{\ordomega}((U_i)_i)=
\{((W_i)_i,r)\setconstr (W_i)_i \text{ pairwise disjoint, }
\textstyle\bigcup_i W_i=X,\, (\forall i)W_i\subseteq U_{r_i}\}
\end{equation*}
and $\dom \dot{C}^{\ordomega}=\{(U_i)_i\setconstr \bigcup_i U_i=X\}$.
\item\label{effzdv} The operation $C^{\ordomega}:=\tilde{S}|_{\dom\dot{C}^{\ordomega}}$ is computable.
\item\label{effzdix} The operation $R^X\compose C^{\ordomega}\compose L'$ is computable for every computable metric space $(Z,d',\nu')$.

\item\label{effzdivprime} The operation $C^{*}:\subseteq\Sigma^0_1(X)^{*}\crsarr\Sigma^0_1(X)^{*}$ is
computable where
\begin{equation*}
C^{*}((U_i)_{i<k})=\{(V_j)_{j<k}\setconstr
(\forall i<k)(V_i\subseteq U_i), \,\textstyle\bigcup_j V_j=X,\, (\forall i,j<k)\left(
i\neq j\smash{\implies}V_i\cap V_j=\emptyset \right)\}
\end{equation*}
and $\dom C^{*}=\{(U_i)_{i<n}\setconstr n\in\N\wedge\bigcup_i U_i=X\}$.
\end{enumerate}
\end{prop}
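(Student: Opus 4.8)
The plan is to establish the eight conditions equivalent through a single cycle
\[
(\ref{effzdii})\Rightarrow(\ref{effzdiv})\Rightarrow(\ref{effzdv})\Rightarrow(\ref{effzdix})\Rightarrow(\ref{effzdivprime})\Rightarrow(\ref{effzdvii})\Rightarrow(\ref{effzdviii})\Rightarrow(\ref{effzdiii})\Rightarrow(\ref{effzdii}),
\]
visiting each node once. A preliminary remark reduces the heavy steps to \pref{prn:zdsubsets}: each condition entails the classical existence statement it effectivises, so by the equivalences recalled in \pref{sec:intro} the space $X$ is zero-dimensional, and at every step I may invoke \pref{prn:zdsubsets} with the class $\mathcal{Y}=\{X\}$ under the trivial representation $\delta_{\mathcal{Y}}$ in which every name denotes $X$. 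Under this specialisation the present $\Delta^0_1(X)$-valued operations $M,N$ match the $\Sigma^0_1(X)^2$-valued operations of \pref{prn:zdsubsets} restricted to $Y=X$, since a disjoint open pair $(V,W)$ covering $X$ is precisely a clopen set together with its complement; hence $\delta_{\Sigma^0_1}$-information on both members is interchangeable with $\delta_{\Delta^0_1}$-information on one, and (\ref{effzdii}) is exactly computability of the operation $B$ of \pref{prn:zdsubsets} for $Y=X$.

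\textbf{The two inherited implications.} The steps (\ref{effzdiii})$\Rightarrow$(\ref{effzdii}) and (\ref{effzdii})$\Rightarrow$(\ref{effzdiv}) carry the real weight and come from \pref{prn:zdsubsets}. For the first, computability of the present $M$ is computability of $M(\cdot,\cdot,X)$, so by (\ref{zdeffiii})$\Rightarrow$(\ref{zdeffi}) of \pref{prn:zdsubsets} the operation $B$ is computable; this is a computable clopen basis carrying a c.e.~refined inclusion, and reading $\delta_{\Delta^0_1}$-names off the complementary pairs gives (\ref{effzdii}). For the second, given $(U_i)_i$ with $\bigcup_i U_i=X$, the basis data of (\ref{effzdii}) feeds the implication (\ref{zdeffi})$\Rightarrow$(\ref{zdeffivprime}) of \pref{prn:zdsubsets} (the Kuratowski disjointification), producing pairwise disjoint open $(W_i)_i$ with $W_i\subseteq U_i$ and $\bigcup_i W_i=X$; pairing these with the identity index $r_i:=i$ yields a value of $\dot{C}^{\ordomega}$.

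\textbf{The routine bookkeeping.} The remaining arrows are reindexing and complementation moves. For (\ref{effzdiv})$\Rightarrow$(\ref{effzdv}), from $((W_i)_i,r)$ put $W'_j:=\bigcup_{i:\,r_i=j}W_i$: these are disjoint, $W'_j\subseteq U_j$, and $\bigcup_j W'_j=X=\bigcup_j U_j$, a value of $C^{\ordomega}$. For (\ref{effzdv})$\Rightarrow$(\ref{effzdix}) I compose $C^{\ordomega}$ with the computable operation $L'$ of \pref{lem:ctslindelof} and with $R^X$, the latter computable because on a disjoint cover the identity $X\setminus V_i=\bigcup_{j\neq i}V_j$ turns the given $\delta_{\Sigma^0_1}$-names into $\delta_{\Pi^0_1}$-information on each $V_i$, hence into $\delta_{\Delta^0_1}$-names; \pref{lem:ctslindelof} guarantees $\bigcup_i u(z_i)=\bigcup_{z}u(z)=X$, so $L'(u)\in\dom C^{\ordomega}$. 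For (\ref{effzdix})$\Rightarrow$(\ref{effzdivprime}) I specialise $Z$ to $\N$ with the discrete metric and $\nu'=\id$, a computable metric space whose canonical dense sequence has $z_i=i$; given $(U_i)_{i<k}$ I build $u$ with $u(i)=U_i$ for $i<k$ and $u(i)=\emptyset$ otherwise, apply $R^X\compose C^{\ordomega}\compose L'$, forget the $\delta_{\Pi^0_1}$-part and truncate to the first $k$ coordinates. Finally (\ref{effzdivprime})$\Rightarrow$(\ref{effzdvii}): for disjoint closed $A,B$ apply $C^{*}$ to the cover $(X\setminus A,X\setminus B)$ to get disjoint open $(V_0,V_1)$ covering $X$; then $A\subseteq V_1$ and $B\subseteq V_0=X\setminus V_1$, so $W:=V_1$ lies in $N(A,B)$ with $\delta_{\Delta^0_1}$-information from complementarity. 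The tail (\ref{effzdvii})$\Rightarrow$(\ref{effzdviii})$\Rightarrow$(\ref{effzdiii}) is immediate from $\cl{M}(x,A)=N(\{x\},A)$ and $M(x,U)=\cl{M}(x,X\setminus U)$, using that singletons are $(\delta_X,\delta_{\Pi^0_1})$-computable and that $\delta_{\Pi^0_1}=X\setminus\delta_{\Sigma^0_1}$ makes $U\mapsto X\setminus U$ the identity on names.

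\textbf{Main obstacle.} The delicate point is condition (\ref{effzdix}), whose universal quantifier over $(Z,d',\nu')$ must be handled both ways: the forward direction relies on \pref{lem:ctslindelof} to ensure that the sampled family $(u(z_i))_i$ still covers $X$ and on the computability of $R^X$, while the backward direction rests on the discrete space on $\N$ being a legitimate computable metric space whose dense sequence from the proof of \pref{lem:ctslindelof} satisfies $z_i=i$, so that the padding $u(i)=U_i$ faithfully reproduces the prescribed finite cover. The other place needing care is tracking, throughout the cycle, which operations carry $\delta_{\Delta^0_1}$- rather than $\delta_{\Sigma^0_1}$-information and verifying that the refinement index $r$ may always be taken trivial.
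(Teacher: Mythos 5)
Your proposal is correct and uses essentially the same ingredients as the paper's own proof: \pref{prn:zdsubsets} with $\mathcal{Y}=\{X\}$ (and the trivial representation) for the two heavy steps, \pref{lem:ctslindelof} together with the complement-union computability of $R^X$ for condition (\ref{effzdix}), the specialisation $Z=\N$ with padding by $\emptyset$ for the converse, and the same complementation bookkeeping for the remaining arrows. The only difference is organisational — you arrange the implications into a single cycle visiting each condition once, whereas the paper proves a shorter main cycle with (\ref{effzdiv}), (\ref{effzdviii}) and (\ref{effzdix}) attached as two-way appendages — and this does not change the mathematical content of any step.
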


\noindent Note the conditions (\ref{effzdvii}), (\ref{effzdviii}) and (\ref{effzdivprime}) correspond to definitions of
large and small inductive dimension, and (loosely speaking) of covering dimension, respectively.
\begin{proof}\hfill

\noindent{(\ref{effzdii})$\implies$(\ref{effzdv}):}
Follows from \pref{prn:zdsubsets}(\ref{zdeffi})$\implies$(\ref{zdeffivprime}).
\begin{rem}
A simpler effectivization of \cite[\S 26.II, Thm 1]{Kuratowski} shows that $\tilde{S}$ is
$(\delta_{\Sigma^0_1}^{\ordomega},\delta_{\Sigma^0_1}^{\ordomega})$-computable under the same assumption.
\end{rem}
\noindent{(\ref{effzdv})$\implies$(\ref{effzdivprime}):}
Trivial.  See \pref{lem:effdimlem}((\ref{effdimi})$\implies$(\ref{effdimii})) for an extension.

\noindent{(\ref{effzdivprime})$\implies$(\ref{effzdvii}):}
Consider arbitrary disjoint closed $A,B\subseteq X$.  Then $(U_i)_{i<2}=(X\setminus A,X\setminus B)$ has
$\bigcup_i U_i=X$ and any $(W_i)_{i<2}\in C^{*}((U_i)_i)$
satisfies $W_1=X\setminus W_0\supseteq A$ and $X\setminus W_1\supseteq B$, hence $W_1\in N(A,B)$.
Also $N$ is computable using
$\delta_{\Delta^0_1}$-information on $W_1$ (more formally, use the second projection from
$R(W_0,W_1,\emptyset,\emptyset,\dots)$).

\noindent{(\ref{effzdvii})$\implies$(\ref{effzdiii}):}
Follows from \pref{prn:zdsubsets}(\ref{zdeffvii})$\implies$(\ref{zdeffiii}).

\noindent{(\ref{effzdviii})$\iff$(\ref{effzdiii}):}
$(x,U)\in\dom M \iff (x,X\setminus U)\in\dom\cl{M}$ with
$\cl{M}(x,X\setminus U) = M(x,U)$ for any such $x$, $U$.

\noindent{(\ref{effzdiii})$\implies$(\ref{effzdii}):}
Follows from \pref{prn:zdsubsets}(\ref{zdeffiii})$\implies$(\ref{zdeffi}).

\noindent{(\ref{effzdv})$\implies$(\ref{effzdix}):}
Use \pref{lem:ctslindelof}, the closure scheme of composition (for partial functions) and computability of
$R^X:\subseteq\Sigma^0_1(X)^{\N}\to\Delta^0_1(X)^{\N}$.  Namely, the latter has a computable realiser
$F:\subseteq\Bairespc\to\Bairespc$ defined by
\begin{equation*}
F(p)\langle i,2\langle k,j\rangle+z\rangle =
\begin{cases}
p_{\langle i,\langle k,j\rangle\rangle}, &\mbox{\textrm{ if $z=0$}}\\
p_{\langle k,j\rangle}, &\mbox{\textrm{ if $z=1\wedge k\neq i$}}\\
p_{\langle i+1,j\rangle},& \mbox{\textrm{ if $z=1\wedge k=i$}}\end{cases}.
\end{equation*}
Then $F\langle p^{(0)},p^{(1)},\dots\rangle=\langle \langle p^{(0)},q^{(0)}\rangle,\langle p^{(1)},q^{(1)}\rangle,\dots\rangle$ where
$\{q^{(i)}_j-1\setconstr j\in\N,\: q^{(i)}_j\geq 1\} =
\{p^{(k)}_j-1\setconstr j,k\in\N,\: k\neq i,\: p^{(k)}_j\geq 1\}$ for each $i$.

\noindent{(\ref{effzdix})$\implies$(\ref{effzdv}):}
Take $Z=\N$, $\nu'=\id_{\N}$; then $L'$ from \pref{lem:ctslindelof} is the identity on $\Sigma^0_1(X)^{\N}$,
and $R^X$ has a computable left-inverse.

\noindent{(\ref{effzdv})$\iff$(\ref{effzdiv}):}
Essentially trivial.  See \pref{lem:effdimlem}((\ref{effdimi})$\iff$(\ref{effdimiv})) for an extension.
\end{proof}

\section{Covering dimension}\label{sec:dimsec}

For a normal topological space $X$ and $n\in\{-1\}\disjunion\N$, write $\dim X\leq n$ if any finite open cover
of $X$ has a finite open refinement of order at most $n+1$; write $\dim X=n$ if $\dim X\leq k$ fails exactly
when $k<n$, or $\dim X=\infty$ if $\dim X\leq k$ fails for all $k\geq -1$.  $\dim X$ is the
(Lebesgue-{\v C}ech) \emph{covering dimension}.  We first recall several classically equivalent forms of the
definition.
\begin{thm}(\cite[Thm 4.3.5]{vanMill})\label{thm:covdimeqv}
For a nonempty separable metric space $X$ and $n\in\N$, the following conditions are equivalent:
\begin{enumerate}
\item\label{covdimi} $\dim X\leq n$ ($\iff \dim X<n+1$),
\item\label{covdimii}
every open cover $\mathcal{U}$ of $X$ has a locally finite closed refinement $\mathcal{V}$ with order
$\leq n+1$,
\item\label{covdimiii}
every open cover $\mathcal{U}$ of $X$ has an open refinement $\mathcal{V}$ with order $\leq n+1$,
\item\label{covdimiv}
every open cover $\mathcal{U}$ of $X$ has a closed shrinking $\mathcal{V}$ with order $\leq n+1$,
\item\label{covdimv}
every open cover $\mathcal{U}$ of $X$ has an open shrinking $\mathcal{V}$ with order $\leq n+1$,
\item\label{covdimvi}
every finite open cover $\mathcal{U}$ of $X$ has a closed shrinking $\mathcal{V}$ with order $\leq n+1$,
\item\label{covdimvii}
every finite open cover $\mathcal{U}$ of $X$ has an open shrinking $\mathcal{V}$ with order $\leq n+1$.
\end{enumerate}
\end{thm}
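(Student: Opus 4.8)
The plan is to separate the seven conditions into a \emph{finite-cover trio} $\{$(\ref{covdimi}),(\ref{covdimvi}),(\ref{covdimvii})$\}$ and a \emph{general-cover group} $\{$(\ref{covdimii}),(\ref{covdimiii}),(\ref{covdimiv}),(\ref{covdimv})$\}$, exploiting that separable metric spaces are normal, Lindel\"of and paracompact. Within the trio, (\ref{covdimvii})$\implies$(\ref{covdimi}) holds because an open shrinking is in particular a finite open refinement, and (\ref{covdimi})$\implies$(\ref{covdimvii}) by \emph{coalescing}: assign each member $W$ of an order-$\leq n+1$ open refinement $\mathcal{W}$ of a finite cover $(U_i)_{i<k}$ to some index $i$ with $W\subseteq U_i$ and set $V_i:=\bigcup\{W:W\in\mathcal{W},\,W\text{ assigned to }i\}$; one checks $\ord(V_i)_{i<k}\leq\ord\mathcal{W}$ and that $(V_i)_{i<k}$ is a shrinking. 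The passage (\ref{covdimvii})$\iff$(\ref{covdimvi}) between open and closed shrinkings of a \emph{fixed} finite cover is the classical content of \pref{prn:shrinkingconstrtwo} (open cover $\to$ closed shrinking by the shrinking lemma, with order non-increasing since each closed piece lies in the corresponding open one) and of \pref{lem:swellbddord} (a closed family of order $\leq n+1$ has an open swelling of the same order, arranged inside the $U_i$), both ultimately resting on Urysohn's lemma, \pref{thm:urylem}.

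For the general-cover group I would route everything through (\ref{covdimv}). The easy returns to the trio are (\ref{covdimv})$\implies$(\ref{covdimiii}) (a shrinking is a refinement), (\ref{covdimiii})$\implies$(\ref{covdimi}) (coalesce a refinement of a finite cover as above), and the specialisations (\ref{covdimiv})$\implies$(\ref{covdimvi}) and (\ref{covdimii})$\implies$(\ref{covdimvi}) (restrict to finite covers; for (\ref{covdimii}) coalesce the locally finite closed refinement, whose local finiteness makes the coalesced unions closed). From (\ref{covdimv}) I would reach the remaining conditions as follows: the order-$\leq n+1$ open shrinking is point-finite, so the shrinking lemma yields closed $F_i\subseteq V_i\subseteq U_i$ covering $X$ with order $\leq n+1$, giving (\ref{covdimiv}); and for (\ref{covdimii}) I would first take a locally finite open refinement of the given cover (paracompactness), then an order-$\leq n+1$ closed shrinking of \emph{that} refinement, which is locally finite and refines the original cover.

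The real work is the single hard implication (\ref{covdimvii})$\implies$(\ref{covdimv}): a finite-cover hypothesis must be upgraded to an arbitrary open cover. First I would use the Lindel\"of property (the classical form of \pref{lem:ctslindelof}) to pass to a countable subcover $(U_i)_{i\in\N}$; a shrinking indexed over $\N$ extends to a shrinking of the original cover by inserting empty sets. Then I would build an open shrinking $(V_i)_{i\in\N}$ with $\overline{V_i}\subseteq U_i$ by induction on $i$, having first fixed a closed shrinking $(\overline{W_i})_i$ with $\overline{W_i}\subseteq U_i$ (countable shrinking lemma) to guarantee eventual covering; at stage $m$ I apply the finite hypothesis (\ref{covdimvii}) to a finite open cover read off the current configuration, using normality and Urysohn (\pref{thm:urylem}) to realise $V_m$ inside $U_m$ so that $V_m$ meets only as many earlier pieces as the order bound permits --- this single step is exactly the finite construction already carried out in \pref{lem:swellbddord} and \pref{prn:shrinkingconstrtwo}. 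The main obstacle, needing the most care, is that the order bound and the covering property survive the infinite iteration: covering holds because each point lies in some $\overline{W_i}$ and so is caught by stage $i$, while the bound $\ord\leq n+1$ is finitary (any $n+2$ indices are all present at a finite stage and no $V_i$ grows once defined) and hence passes to the limit. Finally (\ref{covdimv})$\implies$(\ref{covdimiii})$\implies$(\ref{covdimi}) closes the cycle, yielding the full equivalence.
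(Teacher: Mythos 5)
Your organisation of the seven conditions is largely sound, and the easy reductions you give are correct: the finite trio (\ref{covdimi})$\iff$(\ref{covdimvi})$\iff$(\ref{covdimvii}) via coalescing, the finite shrinking lemma and the swelling lemma; the specialisations (\ref{covdimiv})$\implies$(\ref{covdimvi}) and (\ref{covdimii})$\implies$(\ref{covdimvi}) (with local finiteness keeping coalesced unions closed); and (\ref{covdimv})$\implies$(\ref{covdimiv}) (an order-bounded cover is point-finite, then shrink) and (\ref{covdimv})$\implies$(\ref{covdimii}) (Stone's paracompactness theorem followed by (\ref{covdimiv})). For calibration: the paper itself proves none of this --- \pref{thm:covdimeqv} is imported from \cite[Thm 4.3.5]{vanMill}, and the paper only effectivizes the finite-cover fragment (\pref{lem:swellbddord}, \pref{prn:shrinkingconstrtwo}, \pref{lem:effdimlem}, \pref{thm:covdimshort}) --- so your attempt must be measured against the classical argument.

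The genuine gap is precisely the implication you call the real work, (\ref{covdimvii})$\implies$(\ref{covdimv}), and your inductive construction for it fails. Suppose $V_0,\dots,V_{m-1}$ are already frozen and, as you prescribe, $V_m$ must be an open subset of $U_m$ containing the closed set $C_m:=\overline{W_m}\setminus\bigcup_{i<m}V_i$ without raising the order above $n+1$; then $V_m$ must miss the open set $D_m:=\bigcup\{V_{i_0}\cap\dots\cap V_{i_n}\setconstr i_0<\dots<i_n<m\}$. An open set containing $C_m$ and disjoint from the open set $D_m$ exists if and only if $C_m\cap\overline{D_m}=\emptyset$, but your construction only guarantees $C_m\cap D_m=\emptyset$, and nothing prevents $C_m$ from meeting $\partial D_m$. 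Urysohn separation cannot repair this (it separates \emph{disjoint closed} sets), and strengthening the invariant to bound the order of the closures $\overline{V_i}$ merely relocates the problem, since $C_m$ may meet $(n+1)$-fold intersections of closures while avoiding every $V_i$. Nor does invoking (\ref{covdimvii}) at stage $m$ help: applied to a finite cover that includes the frozen $V_i$ it returns a shrinking of those sets too, wrecking the earlier covering guarantees and producing an infinite regress of re-shrinkings with no open limit; applied to a cover excluding them, its order bound says nothing about intersections with $V_0,\dots,V_{m-1}$. So your closing claim that the order bound ``is finitary and hence passes to the limit'' begs the question --- the difficulty is to obtain it at each finite stage with the earlier sets frozen --- and \pref{lem:swellbddord} and \pref{prn:shrinkingconstrtwo}, which concern one finite family at a time, do not supply this step. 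This upgrade from finite to arbitrary covers is exactly why the theorem is nontrivial: the standard proofs for separable metric spaces do not iterate the finite lemmas but go through the decomposition theorem ($\dim X\leq n$ iff $X=Z_0\cup\dots\cup Z_n$ with each $Z_k$ zero-dimensional, each $Z_k$ then admitting a disjoint open family refining the given cover and covering $Z_k$ --- the relative statement whose effective form is the operation $S$ of \pref{sec:zdsubsets}), or, in the normal-space setting, through Morita's theorem on locally finite covers. One of these (or an equivalent) is the missing ingredient your proof needs.
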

Leaving $N\in\N$ fixed we next consider some effective versions of several such conditions, including
(\ref{covdimi}), (\ref{covdimiii}), (\ref{covdimv}), (\ref{covdimvi}) and (\ref{covdimvii}) above.  Define
$C^{\sigma}:\subseteq\Sigma^0_1(X)^{\sigma}\crsarr\Sigma^0_1(X)^{\sigma}$,
$\dot{C}^{\sigma}: \subseteq\Sigma^0_1(X)^{\sigma}\crsarr\Sigma^0_1(X)^{\sigma}\times\N^{\sigma}$
($\sigma={*},\ordomega$),
$\cl{C}:\subseteq\Sigma^0_1(X)^{*}\crsarr\Pi^0_1(X)^{*}$ and
$\dot{\cl{C}}: \subseteq\Sigma^0_1(X)^{*}\crsarr\Pi^0_1(X)^{*}\times\N^{*}$ by
\begin{align*}
&C^{\sigma}((U_i)_i) = \{(W_i)_i\setconstr (W_i)_i \textrm{ shrinking of $(U_i)_i$ of order $\leq N+1$}\}\\
&\dot{C}^{*}((U_i)_{i<k}) =
\{((W_j)_{j<l},r)\setconstr \absval{r}=l, (\forall j<l)W_j\subseteq U_{r_j}, (W_j)_j \textrm{ cover of order $\leq N+1$}\}\\
&\dot{C}^{\ordomega}((U_i)_{i\in\N}) = \{((W_i)_i,r)\setconstr (\forall j)W_j\subseteq U_{r_j}, (W_j)_j \textrm{ cover of order $\leq N+1$}\}\\
&\cl{C}((U_i)_{i<k})=\{(F_i)_{i<k}\setconstr (F_i)_i \textrm{ shrinking of $(U_i)_i$ of order $\leq N+1$}\}\\
&\dot{\cl{C}}((U_i)_{i<k})=\{((F_j)_{j<l},r)\setconstr \absval{r}=l, (\forall j<l)F_j\subseteq U_{r_j},
(F_j)_j \textrm{ cover of order $\leq N+1$}\}
\end{align*}
Here $\dom C^{\sigma}=\dom\dot{C}^{\sigma}=\{(U_i)_i\in\Sigma^0_1(X)^{\sigma}\setconstr\bigcup_i U_i=X\}$ and
$\dom\cl{C}=\dom\dot{\cl{C}}=\dom C^{*}$.
The following lemma includes an effective version of \cite[Thm 7.1.7]{Engelking} and extends parts of
\pref{prn:zerodspc}.
\begin{lem}\label{lem:effdimlem}
For a computable metric space $X$ and $N\in\N$, consider the following conditions.
\begin{enumerate}
\item\label{effdimi} $C^{\ordomega}$ is
$(\delta_{\Sigma^0_1}^{\ordomega},\delta_{\Sigma^0_1}^{\ordomega})$-computable.

\item\label{effdimiv} $\dot{C}^{\ordomega}$ is
$(\delta_{\Sigma^0_1}^{\ordomega},[\delta_{\Sigma^0_1}^{\ordomega},\id_{\Bairespc}])$-computable.

\item\label{effdimii} $C^{*}$ is $(\delta_{\Sigma^0_1}^{*},\delta_{\Sigma^0_1}^{*})$-computable.
\item\label{effdimiii}
$\dot{C}^{*}$ is $(\delta_{\Sigma^0_1}^{*},[\delta_{\Sigma^0_1}^{*},\delta_{\N^{*}}])$-computable.
\item\label{effdimv} $\cl{C}$ is $(\delta_{\Sigma^0_1}^{*},\delta_{\Pi^0_1}^{*})$-computable.
\item\label{effdimvi} $\dot{\cl{C}}$ is
$(\delta_{\Sigma^0_1}^{*},[\delta_{\Pi^0_1}^{*},\delta_{\N^{*}}])$-computable.
\end{enumerate}
Then \emph{(\ref{effdimii})}, \emph{(\ref{effdimiii})}, \emph{(\ref{effdimv})} and \emph{(\ref{effdimvi})} are equivalent.
Also  \emph{(\ref{effdimi})}$\iff$\emph{(\ref{effdimiv})$\implies$(\ref{effdimii})}.
\end{lem}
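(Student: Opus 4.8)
The overall strategy is to establish the equivalence of the four finite-word conditions \ref{effdimii}, \ref{effdimiii}, \ref{effdimv}, \ref{effdimvi} as a cycle of implications, then handle the infinite-sequence conditions \ref{effdimi} and \ref{effdimiv} separately. First I would observe that the ``dotted'' versions (which additionally output the index map $r$ tracking $W_j\subseteq U_{r_j}$) trivially imply their undotted counterparts by forgetting the $\N^{*}$-component; so \ref{effdimiii}$\implies$\ref{effdimii} and \ref{effdimvi}$\implies$\ref{effdimv} are immediate. The equivalence \ref{effdimi}$\iff$\ref{effdimiv} is the same kind of observation at the level of $\ordomega$-sequences. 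For the converse directions among the finite conditions, the key is that passing from a shrinking to a refinement (and vice versa) is computable: given an open shrinking $(W_i)_{i<k}$ of $(U_i)_{i<k}$, setting $r_i:=i$ exhibits it as a refinement with explicit index map, and conversely a refinement $(W_j)_{j<l}$ with $W_j\subseteq U_{r_j}$ can be collapsed into a shrinking by taking $\bigcup_{r_j=i}W_j$ for each $i<k$, a computable operation on $\delta_{\Sigma^0_1}^{*}$-names.

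The genuine content lies in moving between open and closed refinements, which is where I would invoke the covering-property machinery of Section \ref{sec:covsec}. To go from an open shrinking to a closed one (\ref{effdimii}$\implies$\ref{effdimv}, say), I would apply \pref{prn:shrinkingconstrtwo}: starting from the open cover $(U_i)_i$, first compute an open shrinking $(W_i)_i$ of order $\leq N+1$ using \ref{effdimii}, then use the operation $S_{-}$ to extract closed sets $F_i\subseteq W_i$ still covering $X$; since the $F_i$ shrink the $W_i$ their order is also $\leq N+1$. For the reverse passage from closed to open while \emph{preserving} the order bound, \pref{lem:swellbddord} is exactly the tool: given a closed shrinking $(F_i)_i$ of order $\leq N+1$, the operation $S_{+,N}$ produces an open swelling $(V_i)_i$ with $F_i\subseteq V_i$ and order still $\leq N+1$, and one checks $V_i$ can be arranged inside $U_i$ (shrinking $V_i$ to $V_i\cap U_i$ if necessary, which preserves both the covering property via the $F_i$ and the order bound). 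Threading these through the cycle \ref{effdimii}$\implies$\ref{effdimv}$\implies$\ref{effdimvi}$\implies$\ref{effdimiii}$\implies$\ref{effdimii} closes the finite case.

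For \ref{effdimiv}$\implies$\ref{effdimii}, the plan is to reduce a finite input to the infinite setting by padding: given a finite open cover $(U_i)_{i<k}$ with $\bigcup_i U_i=X$, form the infinite sequence extending it by $\emptyset$ (or repeating $U_{k-1}$), feed it to $\dot{C}^{\ordomega}$, and recover a finite open shrinking of the right order from the output, using the index map $r$ to know which output sets land in which of the original $U_i$ and to detect when the (finitely supported) cover has been exhausted. The main obstacle throughout is the \textbf{uniform preservation of the order bound} $\leq N+1$ under the swelling and shrinking operations: one must verify that \pref{lem:swellbddord} and \pref{prn:shrinkingconstrtwo} not only produce covers of the correct open/closed type but that the nerve-level intersection pattern controlling $\ord$ is genuinely maintained, and that all the index bookkeeping ($r$-components, detection of the effective length $k$) is carried out computably rather than merely classically. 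I expect the finite-to-infinite padding step and the careful tracking of $r$ in the dotted conditions to require the most delicate argument, since the length of the output refinement need not match the input and must be recovered effectively from the $\delta_{\Sigma^0_1}^{*}$ and $\delta_{\N^{*}}$ data.
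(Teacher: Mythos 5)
Your plan is in substance the paper's own proof: $S_{-}$ from \pref{prn:shrinkingconstrtwo} handles the open-to-closed direction, $S_{+,N}$ from \pref{lem:swellbddord} handles the closed-to-open direction (your remark that the swelling should be intersected with the original cover to make it a genuine shrinking is correct, and is a detail the paper itself glosses over), the dotted and undotted conditions are related by the identity index map one way and the fiber collapse $W_i:=\bigcup\{V_j\setconstr r_j=i\}$ the other way, and the $\ordomega$ conditions imply the finite ones by padding the input cover with $\emptyset$, exactly as in the paper.

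One claim in your first paragraph is false, however: (\ref{effdimiii})$\implies$(\ref{effdimii}) and (\ref{effdimvi})$\implies$(\ref{effdimv}) do \emph{not} follow ``by forgetting the $\N^{*}$-component''. The dotted operations output refinements $(W_j)_{j<l}$ of arbitrary length $l$ together with an index map $r$; discarding $r$ leaves a refinement, not a shrinking, so the result need not belong to $C^{*}((U_i)_{i<k})$ at all. The directions that are trivial are the opposite ones, (\ref{effdimii})$\implies$(\ref{effdimiii}) and (\ref{effdimv})$\implies$(\ref{effdimvi}) (take $l=k$ and $r=0\,1\dots(k-1)$); the dotted-to-undotted directions, and likewise (\ref{effdimiv})$\implies$(\ref{effdimi}), are precisely where your collapse construction is required. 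Since you do give that construction in your second paragraph, the slip costs nothing, but the collapse also carries the one genuinely non-trivial verification, which you flag as an ``obstacle'' without carrying out: for distinct indices $i_0,\dots,i_{N+1}$, any tuple $\vec{j}$ with $r_{j_m}=i_m$ for all $m\leq N+1$ is injective, so $\bigcap_{m\leq N+1}W_{i_m}$ is a union of intersections of $N+2$ distinct members of $(V_j)_{j<l}$ and is therefore empty by the order bound on the refinement. With that computation added (it is the displayed equation in the paper's proof of (\ref{effdimiii})$\implies$(\ref{effdimii})), your plan goes through and coincides with the paper's argument.
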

\begin{proof}\hfill

\noindent{(\ref{effdimi})$\implies$(\ref{effdimiv}):}
trivial (take $r=\id_{\N}$);
\noindent{(\ref{effdimii})$\implies$(\ref{effdimiii}):}
take $l=k$, $r=0 1\dots (k-1)\in\N^{*}$;\\
\noindent{(\ref{effdimiii})$\implies$(\ref{effdimii}):}
let $((V_j)_{j<l},r)\in\dot{C}^{*}((U_i)_{i<k})$ and $W_i:=\bigcup\{V_j\setconstr j<l,\: r_j=i\}$ ($i<k$).
Then
\begin{equation*}
\bigcap_{j\leq N+1}W_{i_j} =
\bigcup\{\bigcap_{m=0}^{N+1}V_{j_m}\setconstr \vec{j}\in [0,l)^{N+2}\wedge (\forall m\leq N+1)r_{j_m}=i_m\} =
\emptyset
\end{equation*}
for any distinct indices $i_0,\dots,i_{N+1}<k$ (for, any such $\vec{j}$ is injective and
$(V_i)_{i<l}$ has order at most $N+1$).\\
\noindent{(\ref{effdimiv})$\implies$(\ref{effdimi}):}
Let $((V_j)_{j\in\N},r)\in\dot{C}^{\ordomega}((U_i)_{i\in\N})$ and
$W_i:=\bigcup\{V_j\setconstr j\in\N,\: r_j=i\}$ ($i\in\N$).\\
\noindent{(\ref{effdimi})$\implies$(\ref{effdimii}):}
$(W_i)_{i\in\N}\in C^{\ordomega}(U_0,\dots,U_{k-1},\emptyset,\emptyset,\dots)$ implies $W_i=\emptyset$ for all $i\geq k$.\\
\noindent{(\ref{effdimv})$\iff$ (\ref{effdimvi}):}
same as (\ref{effdimii})$\iff$ (\ref{effdimiii}).
\noindent{(\ref{effdimii})$\implies$(\ref{effdimv}):}
if $(V_i)_{i<k}\in C^{*}((U_i)_{i<k})$, applying \pref{prn:shrinkingconstrtwo} gives in particular $(F_i)_{i<k}\in S_{-}((V_i)_{i<k})$
which is a closed cover with $(\forall i)F_i\subseteq V_i\subseteq U_i$.  Any string of indices $w\in[0,k)^{*}$ has
$\bigcap_{j<\absval{w}}F_{w_j} \subseteq\bigcap_{j<\absval{w}}V_{w_j}$, so $(F_i)_{i<k}$ is of order at most
$N+1$ also.\\
\noindent{(\ref{effdimv})$\implies$(\ref{effdimii}):}
Given a finite open cover $(V_i)_{i<k}$ and $(F_i)_{i<k}\in \cl{C}((V_i)_{i<k})$,
apply \pref{lem:swellbddord} to obtain
$(U_i)_{i<k}\in S_{+,N}((F_i)_{i<k})$.  By definition, $(F_i)_{i<k}$, $(U_i)_{i<k}$ both have order at most
$N+1$, and $(U_i)_{i<k}$ is a cover since $(F_i)_{i<k}$ is.  By computability of $\cl{C}$ and $S_{+,N}$ we
obtain $\delta_{\Sigma^0_1}^{*}$-information on $(U_i)_{i<k}$.
\end{proof}

In view of the results of \pref{lem:effdimlem} (and the classical definition of covering dimension) it seems
reasonable to make the following
\begin{defi}
Let $(X,d,\nu)$ be a computable metric space.  If Condition (\ref{effdimiii}) of \pref{lem:effdimlem}) holds (equivalently, (\ref{effdimii})),
say $X$ is \emph{effectively of covering dimension at most $N$}.
\end{defi}

Further equivalent conditions for $\dim X\leq n$ can also be investigated.  Here we will restrict ourselves
to considering a couple of operations of fixed arity $N+2$.  If $X$ is a computable metric space and $N\in\N$,
define $C:\subseteq\Sigma^0_1(X)^{N+2}\crsarr\Sigma^0_1(X)^{N+2}$ by
\begin{equation*}
C((U_i)_{i\leq N+1}) =\{(W_i)_{i\leq N+1}\setconstr (\forall i)(W_i\subseteq U_i) \wedge
\textstyle\bigcup_i W_i=X\wedge \bigcap_i W_i=\emptyset\};
\end{equation*}
here $\dom C=\{(U_i)_{i\leq N+1}\setconstr \bigcup_i U_i=X\}$.
Then we have the following (cf.~the classical results \cite[Lemma 7.2.13, Cor 7.2.14]{Engelking})
\begin{thm}\label{thm:covdimshort}
Let $X$ be a computable metric space and $N\in\N$.  Then the following are equivalent:
\begin{enumerate}
\item\label{covdimshorti} $C^{*}$ is computable.
\item\label{covdimshortii} $C$ is $(\delta_{\Sigma^0_1(X)}^{N+2},\delta_{\Sigma^0_1(X)}^{N+2})$-computable.
\item\label{covdimshortiii} $D$ is computable, where $D:\subseteq\Pi^0_1(X)^{N+2}\crsarr\Pi^0_1(X)^{N+2}$ with
$\dom D:=\{(B_i)_{i\leq N+1}\setconstr \bigcap_i B_i=\emptyset\}$ and
\begin{equation*}
D((B_i)_{i\leq N+1})\ni (F_i)_{i\leq N+1} :\iff (\forall i)(B_i\subseteq F_i)\wedge\textstyle\bigcup_i F_i=X
\wedge \bigcap_i F_i=\emptyset.
\end{equation*}
\end{enumerate}
\end{thm}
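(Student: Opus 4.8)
The plan is to close the cycle in three unequal pieces: (\ref{covdimshortii})$\iff$(\ref{covdimshortiii}) by complementation duality, (\ref{covdimshorti})$\implies$(\ref{covdimshortii}) as a trivial restriction to arity $N+2$, and the one substantial implication (\ref{covdimshortiii})$\implies$(\ref{covdimshorti}) by effectivising the classical reduction of \cite[Lemma 7.2.13, Cor 7.2.14]{Engelking}. For the last step I would lean on \pref{lem:effdimlem}: since conditions (\ref{effdimii}) and (\ref{effdimv}) there are equivalent, it suffices to produce, from a realiser of $D$ (equivalently $C$), a realiser of $C^{*}$ or of the closed-shrinking operation $\cl{C}$.

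First, (\ref{covdimshortii})$\iff$(\ref{covdimshortiii}): recall $\delta_{\Pi^0_1}(p)=X\setminus\delta_{\Sigma^0_1}(p)$, so complementation between $\Sigma^0_1(X)$ and $\Pi^0_1(X)$ is realised by the identity on names. Given $(U_i)_{i\le N+1}\in\dom C$ one has $(X\setminus U_i)_i\in\dom D$ since $\bigcap_i(X\setminus U_i)=X\setminus\bigcup_i U_i=\emptyset$; and if $(F_i)_i\in D((X\setminus U_i)_i)$ then $W_i:=X\setminus F_i$ satisfies $W_i\subseteq U_i$, $\bigcup_i W_i=X\setminus\bigcap_i F_i=X$ and $\bigcap_i W_i=X\setminus\bigcup_i F_i=\emptyset$, i.e.\ $(W_i)_i\in C((U_i)_i)$; the converse is symmetric. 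Next, (\ref{covdimshorti})$\implies$(\ref{covdimshortii}) is immediate, because for a family of exactly $N+2$ sets the only choice of $N+2$ distinct indices is all of them, so a shrinking of order $\le N+1$ is exactly one with $\bigcap_{i\le N+1}W_i=\emptyset$; restricting a realiser of $C^{*}$ to inputs of length $N+2$ (a computable re-coding of $\delta_{\Sigma^0_1}^{*}$- into $\delta_{\Sigma^0_1}^{N+2}$-names) yields $C$.

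The real work is (\ref{covdimshortiii})$\implies$(\ref{covdimshorti}). I would argue by induction on the arity $k$ of the input cover $(U_i)_{i<k}$. For $k\le N+1$ the identity shrinking already has order $\le N<N+1$, and $k=N+2$ is handled directly by $C$ (equivalently $D$). For $k>N+2$ the step collapses the excess sets: merge $U^{*}:=\bigcup_{i\ge N+1}U_i$ and apply $C$ to the $(N+2)$-ary cover $(U_0,\dots,U_N,U^{*})$ to obtain $V_0,\dots,V_N,V^{*}$ with $\bigcap_{i\le N}V_i\cap V^{*}=\emptyset$ and, via \pref{thm:urylem}, $\overline{V^{*}}\subseteq U^{*}$, after which $(U_i)_{i\ge N+1}$ must be re-expanded inside $V^{*}$ so that the recombined family still has order $\le N+1$. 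To move between the open and closed forms of covers, and to preserve the intersection pattern (hence the order bound) while separating the overlapping pieces, I would use the effective swelling $S_{+,N}$ of \pref{lem:swellbddord} together with the shrinkings $S_{-}$, $T$ of \pref{prn:shrinkingconstrtwo} and Urysohn's lemma \pref{thm:urylem}; by \pref{lem:effdimlem} it is enough to deliver a closed shrinking of order $\le N+1$. Once the classical construction is fixed, effectivity is routine: each operation invoked is computable and the whole is assembled by the composition and dovetailing already used in \pref{lem:swellbddord} and \pref{prn:shrinkingconstrtwo}.

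The main obstacle is exactly the order bookkeeping in this inductive step: splitting the collapsed set $V^{*}$ back among $U_{N+1},\dots,U_{k-1}$ without creating a fresh $(N+2)$-fold intersection is the content of the classical lemma and \emph{cannot} be achieved by normality alone (the $N=0$ instance already forces a clopen partition), so the dimension hypothesis must be re-invoked on the closed set where $N$ of the remaining sets overlap. I expect to keep this within $X$ — avoiding a general subspace theory, as elsewhere in the paper — by applying $C$/$D$ to auxiliary closed families built from closed shrinkings ($S_{-}$) of the current cover, and I would note as a guide that the case $N=0$ is already subsumed by \pref{prn:zerodspc}, where (\ref{effzdvii})$\iff$(\ref{effzdivprime}) is precisely (\ref{covdimshortiii})$\iff$(\ref{covdimshorti}) for $N=0$.
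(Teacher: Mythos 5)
Your handling of the two easy pieces matches the paper and is correct: (\ref{covdimshortii})$\iff$(\ref{covdimshortiii}) by complementation on names, and (\ref{covdimshorti})$\implies$(\ref{covdimshortii}) by restricting a realiser of $C^{*}$ using $\delta_{\Sigma^0_1(X)}^{*}|^{\Sigma^0_1(X)^{N+2}}\equiv\delta_{\Sigma^0_1(X)}^{N+2}$. The problem is the remaining implication, which is the substance of the theorem, and there your proposal has a genuine gap rather than a proof. Your inductive step merges the tail into $U^{*}=\bigcup_{i\geq N+1}U_i$, applies $C$ once to $(U_0,\dots,U_N,U^{*})$, and then must redistribute $V^{*}$ among $U_{N+1},\dots,U_{k-1}$ so that the recombined family has order $\leq N+1$. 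As you yourself flag, that redistribution is exactly the hard part, and your sketch does not carry it out: (a) the single application of $C$ only kills the one pattern $\bigcap_{i\leq N}V_i\cap V^{*}$, whereas order $\leq N+1$ requires emptiness of \emph{every} $(N+2)$-fold intersection, including mixed ones using only a few of $U_0,\dots,U_N$ and two or more tail sets, which nothing in your step controls; (b) re-expanding inside $V^{*}$ amounts to shrinking a cover of the subspace $V^{*}$, which $C$, $D$, $C^{*}$ (defined only for covers of $X$) do not provide and which you say you want to avoid; (c) ``re-invoking the dimension hypothesis on the closed set where $N$ of the remaining sets overlap'' is not a construction. (Also a small slip: $C$ yields an open shrinking, so $\overline{V^{*}}\subseteq U^{*}$ does not follow from \pref{thm:urylem} at that point.) Your appeal to the $N=0$ case in \pref{prn:zerodspc} does not help either, since that equivalence is routed through clopen basis numberings, which have no analogue for $N>0$.

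The fix --- which is the paper's proof --- dispenses with induction on arity and with re-expansion entirely. For fixed $m>N+2$, enumerate all $(N+1)$-element subsets $A_0,\dots,A_{L-1}$ of $[0,m)$ and process them in a loop: given the current shrinking $(V^{(k)}_i)_{i<m}$ and $A_k=\{i_0<\dots<i_N\}$, apply $C$ to the $(N+2)$-cover $(V^{(k)}_{i_0},\dots,V^{(k)}_{i_N},\bigcup_{i\notin A_k}V^{(k)}_i)$, keep the outputs on the indices in $A_k$, and replace every other set by $W\cap V^{(k)}_i$, where $W$ is the output corresponding to the merged remainder. This is still a cover of $X$ (because $W\subseteq\bigcup_{i\notin A_k}V^{(k)}_i$), the new pattern is dead (any new tail set lies inside $W$, and $\bigcap_{j\in A_k}V^{(k+1)}_j\cap W=\emptyset$), and patterns killed at earlier stages stay dead because all sets only shrink --- this is the paper's \pref{pry:shrord}. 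After all $L$ steps, any $N+2$ distinct indices consist of some $A_l$ plus one extra index, hence have empty intersection, i.e.\ the final family has order $\leq N+1$. The key idea you are missing is that one never splits $W$ into order-controlled pieces: intersecting each remaining original set with $W$ suffices, and the tail-only patterns are handled later by their own iterations of the loop. No swelling, Urysohn, or closed shrinkings are needed for this direction, and computability is immediate since the whole construction is a finite composition of applications of $C$ with binary unions and intersections of open sets.
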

\begin{proof}\hfill

\noindent{(\ref{covdimshorti})$\implies($\ref{covdimshortii}):}
Any realiser of $C^{*}$, given a $\delta_{\Sigma^0_1(X)}^{*}$-name of $(U_i)_{i\leq N+1}$, computes a name of some shrinking $(W_i)_{i\leq N+1}$ with order at most $N+1$, i.e.~$\bigcap_{i\leq N+1}W_i=\emptyset$.  Since
$\delta_{\Sigma^0_1(X)}^{*}|^{\Sigma^0_1(X)^{N+2}}\equiv \delta_{\Sigma^0_1(X)}^{N+2}$ the result follows.

\noindent{(\ref{covdimshortii})$\implies$(\ref{covdimshorti}):}
Given $(U_i)_{i<m}\in\dom C^{*}$, note it is trivially a shrinking of itself of order at most $N+1$ if $m<N+2$
(then no $\vec{i}\in [0,m)^{N+2}$ is injective).  If $m=N+2$, clearly it is enough to apply $C$.  If $m>N+2$ we
can apply $C$ several times, as follows.  First, given $(U_i)_{i<m}\in\dom C^{*}$, compute some
$(A_l)_{l<L}\subseteq E(\N)$ enumerating all $A\subseteq [0,m)$ with $\absval{A}=N+1$; this can be done
computably in $m$, $N$.
Define
$H:\subseteq\Sigma^0_1(X)^{*}\times E(\N)\crsarr\Sigma^0_1(X)^{*}$ by
$\dom H=\{((U_i)_{i<m},A)\setconstr \bigcup_i U_i=X,\: A\subseteq [0,m),\: \absval{A}=N+1\}$ and
\begin{equation*}
\begin{split}
H(U_0\dots U_{m-1},A) =
\{(V_i)_{i<m}\setconstr (\exists W\in\Sigma^0_1(X)) (V_{i_0}\dots V_{i_N};W)\in
 C(U_{i_0}\dots U_{i_N};\textstyle\bigcup_{A\not\ni i<m}U_i),\\
(\forall j<N)(i_j<i_{j+1}),\: A=\{i_j\setconstr j\leq N\}
\:\text{and}\:(\forall i<m)(i\not\in A\implies V_i=W\cap U_i)\}.
\end{split}
\end{equation*}
One checks $H$ is computable, since $C$, binary union and intersection for open sets and relevant operations
with finite sets are computable.  In particular, the (inner to outer) composition of $H(\cdot,A_k)$ ($k<L$) is
computable.

We write $V^{(0)}_i := U_i$ ($i<m$) and $(V^{(k+1)}_i)_{i<m} \in H((V^{(k)}_i)_{i<m},A_k)$ for $k<L$.
Then it is sufficient to prove the following property holds inductively:
\begin{property}\label{pry:shrord}
$(V^{(k)}_i)_{i<m}$ is a shrinking of $(V^{(0)}_i)_{i<m}$ with
$V^{(k)}_i\cap\bigcap_{j\in A_l}V^{(k)}_j=\emptyset$ if $l<k$, $m>i\not\in A_l$.
\end{property}
Trivially \pref{pry:shrord} holds for $k=0$.  For the inductive case, any $i<m$ has either $i\not\in A_k$
(so $V^{(k+1)}_i=W\cap V^{(k)}_i\subseteq V^{(k)}_i$, where $W$ depends on $k$) or $i\in A_k$, say $i=i_j$
(where $i_0<\dots<i_N$ are all the elements of $A_k$).  In the latter case, $V^{(k+1)}_i=V^{(k+1)}_{i_j}
\subseteq V^{(k)}_{i_j}=V^{(k)}_i$.  Also,
\begin{equation*}
\textstyle\bigcup_{i<m}V^{(k+1)}_i =
\bigcup_{i\in A_k}V^{(k+1)}_i\union (\bigcup_{m>i\not\in A_k}W\cap V^{(k)}_i) =
\bigcup_{i\in A_k}V^{(k+1)}_i\union W=X,
\end{equation*}
so $(V^{(k+1)}_i)_{i<m}$ is a shrinking of $(V^{(k)}_i)_{i<m}$.
Now consider $A_l$
where $l<k$; for $m>i\not\in A_l$ we have
\begin{equation*}
\textstyle\bigcap_{j\in A_l}V^{(k+1)}_j\cap V^{(k+1)}_i
\subseteq \bigcap_{j\in A_l}V^{(k)}_j\cap V^{(k)}_i=\emptyset.
\end{equation*}
If instead $l=k$ and $i\not\in A_k$
then
\begin{equation*}
\textstyle\bigcap_{j\in A_k}V^{(k+1)}_j\cap V^{(k+1)}_i\subseteq \bigcap_{j\leq N}V^{(k+1)}_{i_j}\cap W=\emptyset.
\end{equation*}

Using the above induction, after $L$ steps we have dealt with each $A_k$ ($k<L$).
But then \pref{pry:shrord} means $(V^{(L)}_i)_{i<m}$ is a shrinking of $(U_i)_{i<m}$ of order at most $N+1$.

\noindent{(\ref{covdimshortii})$\iff$(\ref{covdimshortiii}):}
$\dom D=\{(B_i)_{i\leq N+1}\setconstr (X\setminus B_i)_{i\leq N+1}\in\dom C\}$, with
$(F_i)_{i\leq N+1}\in D((B_i)_{i\leq N+1})$ iff $(X\setminus F_i)_{i\leq N+1}\in C((X\setminus B_i)_{i\leq N+1})$.
\end{proof}

\section{Compact subsets and an application}\label{sec:repnsec}
In this section our intention is to present some consequences of assuming that $X$ effectively has
covering dimension at most $0$.  In fact, as we will be dealing with total boundedness it is convenient to make
a stronger assumption than in Sections \ref{sec:zdspcs} and \ref{sec:dimsec}, incorporating effective
compactness.  For working with computability of compact subsets we will assume familiarity with
\cite{BrattkaPresser}, though our notation will be slightly different.  Any $w\in\N^{*}$ codes an
\emph{ideal cover}, namely the finite collection of open sets $\alpha(w_i)$ ($i<\absval{w}$).  Informally, a
$\deltac$-name of $K\in\mathcal{K}(X)$ is an unpadded list consisting of $\langle w\rangle$ for every ideal
cover $w$ which covers $K$.
\begin{defi}
Ideal covers $u,v\in\N^{*}$ are \emph{formally disjoint} if
\begin{equation*}
(\forall i<\absval{u})(\forall j<\absval{v})d(\nu(\pr_1 u_i),\nu(\pr_1 v_j))>\nu_{\Qplus}(\pr_2 u_i)+\nu_{\Qplus}(\pr_2 v_j).
\end{equation*}
For any ideal cover $u\in\N^{*}$ the \emph{formal diameter} of $u$ is
\begin{equation*}
D\langle u\rangle:=\max_{i,j<\absval{u}}d(\nu(\pr_1 u_i),\nu(\pr_1 u_j))+\nu_{\Qplus}(\pr_2 u_i)+\nu_{\Qplus}(\pr_2 u_j).
\end{equation*}
\end{defi}
Informally, we refer to both $w\in\N^{*}$ and $U\langle w\rangle:=\bigcup_{i<\absval{w}}\alpha(w_i)$ as the ideal cover $w$.
\begin{defi}
Define
$\mathcal{Z}_{\textrm{c}}(X):=\{Y\in\mathcal{K}(X)\setconstr \dim Y\leq 0\}$ and
$\deltadc:\subseteq\Bairespc\to\mathcal{Z}_{\textrm{c}}(X)$ by
\begin{equation*}
\begin{split}
p\in(\deltadc)^{-1}\{K\} :\iff \big\{\langle\langle w^{(0)}\rangle,\dots,\langle w^{(l-1)}\rangle\rangle\setconstr
l\in\N,\: (w^{(i)})_{i<l}\subseteq\N^{*}, \\
\textstyle\bigcup_{i<l}U\langle w^{(i)}\rangle\supseteq K,\: (w^{(i)})_{i<l}\text{ pairwise formally disjoint}
\big\} = \{a\in\N\setconstr (\exists i)p_i=a+1\}.
\end{split}
\end{equation*}
\end{defi}
Informally, $p\in(\deltadc)^{-1}\{K\}$ iff $p$ is a padded list of all formally disjoint tuples of ideal covers
which together cover $K$.  Representation $\deltadc$ will not be used extensively in this paper, but may be of
independent interest.  When considering effective zero-dimensionality of $X$ (as in \pref{sec:zdspcs}), it is
also useful to define a representation of the class $\KO$ of compact open subsets:
\begin{equation*}
\delta_{\KO} :=\delta_{\Delta^0_1(X)}|^{\KO} \sqcap \deltac|^{\KO}.
\end{equation*}

Finally, define
$\hat{D} :\subseteq\Sigma^0_1(X)\times \mathcal{K}_{>}(X)\crsarr
\Delta^0_1(X)^{\N}\times(\Zplus)^{\N}\times\{0,1\}^{\N}$ by declaring
\begin{equation*}
\begin{split}
((W_i)_i,r,s)\in \hat{D}(U,K)\onespace \text{ iff }\onespace
(W_i)_i \text{ pairwise disjoint, } \textstyle\bigcup_i W_i=U\subseteq K,\\
(\forall i)\left( W_i=\emptyset\smash{\iff } s_i=0 \right) \text{ and }
 (\forall n)(\forall j<r_n)\left(\diam W_{\sum_{i<n}r_i+j}<(n+1)^{-1} \right).
\end{split}
\end{equation*}
Here $\mathcal{K}_{>}(X)$ denotes class $\mathcal{K}(X)$ equipped with the representation $\deltac$.
The operation $\hat{D}$ roughly corresponds to the statement of \cite[\S 26.II, Cor 1]{Kuratowski}.
\begin{prop}\label{prn:kzd}
Consider the following conditions on computable metric space $X$:
\begin{enumerate}
\item\label{effzdi} $X$ is $\deltadc$-computable.
\item\label{effzdiiprime} There exist a basis $\mathcal{B}$ for $\mathcal{T}_X$ and computable $b:\N\to\KO$
with $\img b=\mathcal{B}\subseteq\KO$.
\item\label{effzdvi} operation $\hat{D}$ is computable and there exists computable $\gamma:\N\to\Sigma^0_1(X)\times\mathcal{K}_{>}(X)$ such that $\mathcal{B}:=\img(\pr_1\compose\gamma)$ is a basis for $\mathcal{T}_X$ and
$(\forall a\in\N)(\pr_1\gamma(a)\subseteq\pr_2\gamma(a))$.
\item\label{effzdiiagain} There exist computable $b:\N\to\Delta^0_1(X)$ and c.e.~refined inclusion of
$b\mathwrt\alpha$
such that $\mathcal{B}:=\img b$ is a basis for $\mathcal{T}_X$.

\end{enumerate}
Then $(\ref{effzdi})\implies (\ref{effzdiiprime}) \iff (\ref{effzdvi})\implies(\ref{effzdiiagain})$.
\end{prop}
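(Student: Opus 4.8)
The plan is to establish the cycle of implications $(\ref{effzdi})\implies(\ref{effzdiiprime})\iff(\ref{effzdvi})\implies(\ref{effzdiiagain})$ by exhibiting the required bases and computable data at each step, leaning on the effective covering-dimension machinery already developed. I will treat each arrow separately.

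For $(\ref{effzdi})\implies(\ref{effzdiiprime})$: assuming $X$ is $\deltadc$-computable, the name of $X$ lists all tuples of pairwise formally disjoint ideal covers which together cover $X$. The idea is to extract from this list, for a given ideal ball $\alpha(a)$, a \emph{clopen} refinement by forming a pairwise formally disjoint ideal cover $(U\langle w^{(i)}\rangle)_{i<l}$ of $X$ with each piece small enough to lie inside $\alpha(a)$ (controlling formal diameter $D\langle w^{(i)}\rangle$). Since the pieces are formally disjoint, each $U\langle w^{(i)}\rangle$ is simultaneously open and (being the complement of the union of the finitely many others, which is also open) closed, hence in $\Delta^0_1(X)$; being a finite union of ideal balls it is totally bounded, so with the $\deltac$-information implicit in the cover it lies in $\KO$. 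I would enumerate such pieces over all scales $2^{-j}$ to produce a computable $b:\N\to\KO$ whose image is a basis. The main bookkeeping is verifying that formally disjoint \emph{open} pieces really yield $\delta_{\KO}$-names; this uses that $\delta_{\KO}=\delta_{\Delta^0_1(X)}|^{\KO}\sqcap\deltac|^{\KO}$, so I must supply both the $\Delta^0_1$ and the $\deltac$ components, the latter coming from the totally-bounded ideal-cover structure.

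For $(\ref{effzdiiprime})\iff(\ref{effzdvi})$: given $(\ref{effzdiiprime})$ with $b:\N\to\KO$, I set $\gamma(a):=(\pr_1 b(a),\pr_1 b(a))$ (viewing the same compact open set as a $\Sigma^0_1$ set and as a compact subset of itself via $\deltac$), so $\pr_1\gamma(a)\subseteq\pr_2\gamma(a)$ trivially; it then remains to check $\hat{D}$ is computable. Here the operation $\hat{D}$ asks, given an open $U\subseteq K$ with $K$ presented by $\deltac$, for a pairwise disjoint decomposition of $U$ by clopen sets of shrinking diameter together with the auxiliary counting sequence $r$ and the emptiness-flag sequence $s$. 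I expect to obtain this by combining the basis of compact clopen sets with an effective version of the decomposition of \cite[Cor 26.II.1]{Kuratowski} alluded to in the introduction—covering $U$ by small basic clopen sets and disjointifying them as in the construction $W^{*}_{i,j}=T_{i,j}\cap\bigcap_{\langle k,l\rangle<\langle i,j\rangle}U_{k,l}$ used in the proof of \pref{prn:zdsubsets}. The converse $(\ref{effzdvi})\implies(\ref{effzdiiprime})$ recovers a compact-open basis directly from $\img(\pr_1\compose\gamma)$, noting that $\pr_1\gamma(a)\subseteq\pr_2\gamma(a)$ together with the $\mathcal{K}_{>}$-information on $\pr_2\gamma(a)$ furnishes the $\deltac$-data making each basic set compact open.

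For $(\ref{effzdvi})\implies(\ref{effzdiiagain})$, or equivalently $(\ref{effzdiiprime})\implies(\ref{effzdiiagain})$: starting from a computable $b:\N\to\KO$ enumerating a clopen basis, I must produce a c.e.\ refined inclusion of $b$ with respect to $\alpha$, i.e.\ a c.e.\ relation $\sqsubset$ with $b(a)\sqsubseteq\alpha(c)$ whenever $a\sqsubset c$ and satisfying axiom (\ref{inclone}). The natural definition is to enumerate $a\sqsubset c$ whenever a $\KO$-name certifies $b(a)\subseteq\alpha(c)$; since $b(a)$ is compact and $\alpha(c)$ open, this inclusion is semidecidable (a finite ideal subcover of $b(a)$ within $\alpha(c)$ appears), giving a c.e.\ relation. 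Axiom (\ref{inclone}) holds because $\mathcal{B}$ is a basis: any point in $\alpha(c)$ lies in some basic $b(a)\subseteq\alpha(c)$. I expect the \textbf{main obstacle} to lie in the first implication—rigorously turning $\deltadc$-information into genuine $\delta_{\KO}$-names for the clopen basis, in particular furnishing the $\deltac$-component uniformly and verifying that formal disjointness of ideal covers yields true clopenness with simultaneous total boundedness—whereas the remaining arrows are essentially repackaging of the already-established semidecidability and basis facts.
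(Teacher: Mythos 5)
Your outer arrows are essentially the paper's: for (\ref{effzdi})$\implies$(\ref{effzdiiprime}) the paper likewise takes as basic sets the single components $U\langle w^{(n,k)}\rangle$ of the formally disjoint tuples listed in a $\deltadc$-name (one slip in your justification: a finite union of ideal balls need not be totally bounded in a general metric space; compactness of each component comes from its being closed in the compact space $X$), and your semidecidable-inclusion relation for (\ref{effzdiiprime})$\implies$(\ref{effzdiiagain}) is exactly the paper's $\sqsubset^{\prime}$. The middle equivalence, however, contains two genuine gaps, and this is where the real difficulty lies, not in the first arrow as you suggest. First, your argument for (\ref{effzdvi})$\implies$(\ref{effzdiiprime}) is wrong: condition (\ref{effzdvi}) does \emph{not} assert that the sets $\pr_1\gamma(a)$ are clopen or compact --- they are arbitrary open basis sets contained in compact sets (for the Cantor set with the Euclidean metric one may take $\pr_1\gamma(a)$ to be the ideal balls and $\pr_2\gamma(a)=X$; such balls are typically not closed). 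Hence $\img(\pr_1\compose\gamma)\not\subseteq\KO$ in general, and no $\mathcal{K}_{>}$-information on the ambient set $\pr_2\gamma(a)$ can produce $\deltac$-names of non-compact sets. The hypothesis you never use in this direction, computability of $\hat{D}$, is precisely what is needed: apply $\hat{D}$ to each pair $\gamma(k)$ and take as basis the clopen \emph{pieces} of the resulting decompositions; each piece is closed, so intersecting it with $\pr_2\gamma(k)$ via the computable $\cap:\Pi^0_1(X)\times\mathcal{K}_{>}(X)\to\mathcal{K}_{>}(X)$ supplies the $\deltac$-component, and the basis property holds because each $\pr_1\gamma(k)$ is the union of its pieces.

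Second, for (\ref{effzdiiprime})$\implies$(\ref{effzdvi}), setting $\gamma(a):=(b(a),b(a))$ is fine, but ``cover $U$ by small basic clopen sets and disjointify'' does not compute $\hat{D}$. The output format of $\hat{D}$ forces, for each $n$, a \emph{finite} block of $r_n$ pieces of diameter $<(n+1)^{-1}$, so only finitely many pieces in the entire output may fail any given diameter bound. Disjointifying a countable enumeration of verified-small basic sets yields pieces whose only known diameter bound is the one inherited from the basic set they came from, and pieces carrying only a coarse bound keep appearing arbitrarily late; once the early finite blocks of the output have been committed, such a piece can no longer be placed anywhere, so the naive procedure fails. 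The missing ingredient is a second, compactness-based refinement step: after disjointifying into clopen $W_i^{*}\subseteq U\subseteq K$, each $W_i^{*}$ is compact (closed in $K$), so from a $\deltac$-name of it (computable via the same intersection operation) one can find a \emph{finite} ideal cover by balls of formal diameter $<(i+1)^{-1}$ formally included in $W_i^{*}$, and then apply the relativized reduction operator $\tilde{S}^{W_i^{*}}$ (a realiser of $\tilde{S}^{X}$ also realises the relativized operator, and $\tilde{S}^{X}$ is computable because (\ref{effzdiiprime}) gives (\ref{effzdiiagain}), i.e.~condition (\ref{effzdii}) of \pref{prn:zerodspc}) to partition $W_i^{*}$ into finitely many clopen pieces of diameter $<(i+1)^{-1}$: these constitute block $i$. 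Finally, you never address the flags $s$ and the requirement $r_i\geq 1$ in the definition of $\hat{D}$: emptiness of each piece must be \emph{decided}, which requires $\delta_{\KO}$-names (both $\Delta^0_1$- and $\deltac$-information) of the pieces rather than $\Delta^0_1$-names alone.
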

\begin{proof}\hfill

\noindent{(\ref{effzdi})$\implies$(\ref{effzdiiprime}):}
If $p$ is a computable $\deltadc$-name for $X$, for each $n^{\prime}\in\N$ we can compute the
$(n^{\prime})^{\Th}$ tuple $\langle n,k\rangle$ that satisfies $p_n\geq 1$ and $k<\absval{\nu_{\N^*}(p_n-1)}$.
Note $n^{\prime}$ can be arbitrarily large since $\deltadc$ has complete names and any tuple
$\langle w^{(0)}\rangle\dots\langle w^{(l-1)}\rangle$ of formally disjoint ideal covers covering $X$ can be padded by adding copies of the empty cover.  Writing $\langle w^{(n,k)}\rangle:=\nu_{\N^{*}}(p_n-1)_k$ for any
such $n,k$, note
\begin{equation*}
b^{\prime}(n^{\prime})=b\langle n,k\rangle=K :=
\bigcup_{i<\absval{w^{(n,k)}}}\alpha(w^{(n,k)}_i)=\bigcup_{i<\absval{w^{(n,k)}}}\hat{\alpha}(w^{(n,k)}_i)
\end{equation*}
using formal disjointness.  In particular, finite unions preserve openness and closedness properties, while
$K$ is compact as a closed subset of $X$.

We can further compute some $q´\in\delta_{\Sigma^0_1}^{-1}\{K\}$ and $r´\in\delta_{\Pi^0_1}^{-1}\{K\}$.
Clearly $\langle q´,r´\rangle$ is a $\delta_{\Delta^0_1}$-name for $K$, and the definition of
$\deltadc$ ensures $b^{\prime}(n^{\prime})$
runs over a basis for topology of $X$ by the following argument.
Given $\eta>0$, by compactness and zero-dimensionality there exist
finitely many points $(x_k)_{k<l_0}\subseteq X$ and a finite partition $(U_i)_{i<l}\subseteq\Sigma^0_1(X)$
such that $\mathcal{U}_0=(\ball{x_k}{\frac{\eta}{2}})_{k<l_0}$ is a cover of $X$ and $(U_i)_{i<l}$ is a refinement of
$\mathcal{U}_0$.
Each $U_i=X\setminus\bigcup_{i'\neq i}U_{i'}$ is compact with $\diam U_i<\eta$ and we claim we can pick ideal
covers $w^{(i)}\in\N^{*}$ of each $U_i$ ($i<l$) which are pairwise formally disjoint and each have formal
diameter $<\eta$ (this ensures the basis condition is met for `components' $U\langle w^{(i)}\rangle$).  Namely,
let
\begin{equation*}
r:=\min_i d(U_i,X\setminus U_i)=\min_{i,i':i\neq i'}d(U_i,U_{i'}) \:\text{ ($>0$ by compactness)}
\end{equation*}
and $D:=\max_i\diam U_i$ ($<\eta$).  Clearly any respective irredundant ideal covers $w^{(i)}$, $w^{(i')}$
of $U_i$, $U_{i'}$ ($i\neq i'$) with each radius $<\frac{1}{2}\min\{r,\eta-D\}$ satisfy
\begin{equation*}
\nu(\pr_1 w^{(i)}_j)\in U_i\wedge \nu(\pr_1 w^{(i')}_{j'})\in U_{i'}\smash{\implies}
d(\nu(\pr_1 w^{(i)}_j),\nu(\pr_1 w^{(i')}_{j'}))\smash{\geq} r >
\nu_{\Qplus}(\pr_2 w^{(i)}_j)+\nu_{\Qplus}(\pr_2 w^{(i')}_{j'})
\end{equation*}
(for any $j<\absval{w^{(i)}}$, $j'<\absval{w^{(i')}}$) and also
\begin{equation*}
d(\nu(\pr_1 w^{(i)}_j),\nu(\pr_1 w^{(i)}_{j'})) + \nu_{\Qplus}(\pr_2 w^{(i)}_j) +
\nu_{\Qplus}(\pr_2 w^{(i)}_{j'}) < D+(\eta-D)=\eta
\end{equation*}
(for any $j,j'<\absval{w^{(i)}}$).  This completes proof of the claim above.

Finally we observe $b,b^{\prime}:\subseteq\N\to\KO$ are computable (since $p$ computable).  We have written
$b^{\prime}(n^{\prime})=b\langle n,k\rangle$ for convenience, however the domain of $b$ depends on $p$,
whereas $b^{\prime}$ is total.

\noindent{(\ref{effzdiiprime})$\implies$(\ref{effzdiiagain}):} Let $F:\subseteq\Bairespc\to\Bairespc$
be a computable $(\delta_{\N},\deltac)$-realiser of $b$ and define $\sqsubset^{\prime}$ by
\begin{equation*}
c \sqsubset^{\prime} d :\iff \left(
 \text{$F(c.0^{\ordomega})$ enumerates an ideal cover $u$ with $(\forall i<\absval{u})(u_i\sqsubset d)$} \right).
\end{equation*}
Then $(\sqsubset^{\prime})\subseteq\N^2$ is c.e.~and is a formal inclusion of $b$ with respect to $\alpha$
satisfying property (\ref{inclfive}) from \pref{def:incldef}.  In fact,
$\sqsubset^{\prime}$ coincides with set inclusion ($c\sqsubset^{\prime} d$ iff $b(c)\subseteq\alpha(d)$), as we now show.
First, assume $\emptyset\neq b(c)\subseteq\alpha(d)$.  By compactness,
$\tau:=\nu_{\Qplus}(\pr_2 d)-\max_{z\in b(c)}d(z,\nu(\pr_1 d))>0$.  Pick an irredundant ideal cover $u$ of
$b(c)$
such that $u_i\sqsubset d$ for each $i<\absval{u}$.
For instance, consider all $a\in\N$ such that $d_{b(c)}(\nu(\pr_1 a))<\nu_{\Qplus}(\pr_2 a)<\frac{\tau}{2}$
(then take a finite subcover): for appropriate $z\in b(c)$ we have
\begin{align*}
d(\nu(\pr_1 a),\nu(\pr_1 d))+\nu_{\Qplus}(\pr_2 a) &\leq d(\nu(\pr_1 a),z) + d(z,\nu(\pr_1 d)) + \nu_{\Qplus}(\pr_2 a) \\
& < 2\nu_{\Qplus}(\pr_2 a) + (\nu_{\Qplus}(\pr_2 d)-\tau) \leq \nu_{\Qplus}(\pr_2 d),
\end{align*}
so $a\sqsubset d$.  Then $u_i\sqsubset d$ for all $i<\absval{u}$ and $u$ is enumerated in any $\deltac$-name of
$b(c)$, hence
$c\sqsubset^{\prime}d$.  As $u=\emptystring$ is enumerated in any $\deltac$-name of $b(c)=\emptyset$, the same conclusion holds
without assuming $b(c)\neq\emptyset$.

\noindent{(\ref{effzdvi})$\implies$(\ref{effzdiiprime}):}
Let $F$ and $G$ be computable realisers of $\hat{D}$ and $\gamma$ respectively, and write
$(F\compose G)(k.0^{\ordomega})=\langle\langle\langle t^{(0)},\dots\rangle,r\rangle,s\rangle$.  Then $
H:\subseteq\Bairespc\to\Bairespc,\langle j,k\rangle.p\mapsto t^{(j)}$
is computable and we claim $b:\N\to\Delta^0_1(X),\langle j,k\rangle\mapsto
(\delta_{\Delta^0_1}\compose H)(\langle j,k\rangle.0^{\ordomega})$ is a basis numbering.  For, if $x\in X$,
$U\in\mathcal{T}_X$ with $x\in U$ then there exists $k\in\N$
such that $x\in (\pr_1\compose\gamma)(k)=(\pr_1\compose G)(k.0^{\ordomega})\subseteq U$.  Since
$(\hat{D}\compose\gamma)(k)$ is equal to
\begin{equation*}
(\hat{D}\compose[\delta_{\Sigma^0_1},\deltac]\compose G)(k.0^{\ordomega})\ni
([[\delta_{\Delta^0_1}^{\ordomega},\id_{\Bairespc}],\id_{\Bairespc}|^{\{0,1\}^{\N}}]\compose F\compose G)(k.0^{\ordomega}) = ((\delta_{\Delta^0_1}(t^{(i)}))_{i\in\N},r,s)
\end{equation*}
we in particular have
$(\pr_1\compose\gamma)(k)=\bigcup_i\delta_{\Delta^0_1}(t^{(i)})$, so $x\in\delta_{\Delta^0_1}(t^{(j)})=(\delta_{\Delta^0_1}\compose H)(\langle j,k\rangle.0^{\ordomega})$ for some $j\in\N$.

Finally we observe in fact $\img b\subseteq\KO$ with $b:\N\to\KO$ computable.  More formally,
$b\langle j,k\rangle=(\iota\compose b)\langle j,k\rangle\cap(\pr_2\compose\gamma)(k)$ for all $j,k\in\N$ where
$\iota:\Delta^0_1(X)\to\Pi^0_1(X)$ and $\cap:\Pi^0_1(X)\times\mathcal{K}_{>}(X)\to\mathcal{K}_{>}(X)$ are
computable.

\noindent{(\ref{effzdiiprime})$\implies$(\ref{effzdvi}) (Proof sketch):}
Given $p\in\Bairespc=\dom\delta_{\Sigma^0_1(X)}$, a computable realiser $F:\subseteq\Bairespc\to\Bairespc$ of
$b$ and c.e.~formal inclusion $\sqsubset^{\prime}$ as in (\ref{effzdiiagain}), dovetail checking if
$m\sqsubset^{\prime}p_i-1$ (over $m,i\in\N$ such that $p_i\geq 1$).  If so, the computation using index $m$ ends, we
increment $n$ and dovetail output of $(F(m.0^{\ordomega})_{2 k})_{k\in\N}$ as $p^{(n)}$ in
$\langle p^{(0)},p^{(1)},\dots\rangle$.

This describes (without direct use of compactness information from $\delta_{\KO}$) a computable map
$G:\Bairespc\to\Bairespc$ realising
\begin{equation*}
V:\Sigma^0_1(X)\crsarr \Delta^0_1(X)^{\N}, U\mapsto \{(W_i)_i\setconstr \textstyle\bigcup_i W_i=U,
(\forall N)(\exists i\geq N)(\diam W_i<(N+1)^{-1}\}.
\end{equation*}
If $(U,K)\in\dom\hat{D}$ (i.e.~$U\subseteq K$) and $(\tilde{W}_i)_i\in V(U)$,
$W_i^{*}:=\tilde{W}_i\setminus\bigcup_{j<i}\tilde{W}_j$, we can also write
$W_i^{\prime}:=\iota^{\prime}(W_i^{*})\cap K$
where $\cap:\Pi^0_1(X)\times\mathcal{K}_{>}(X)\to\mathcal{K}_{>}(X)$ and
$\iota^{\prime}:\Delta^0_1(X)\to\Pi^0_1(X)$ are computable.  Using compactness, for each $i$ an ideal cover
$w^{(i)}\in\N^{*}$ of $W_i^{\prime}$ can be found, by ideal balls of formal diameter $<(i+1)^{-1}$ and formally
included in $W_i^{*}$.

Considering relatively open sets in $W_i^{*}$, apply the reduction principle to the cover
$\alpha(w^{(i)}_j)\cap W_i^{*}$ ($j<\absval{w^{(i)}}$): let
\begin{equation*}
(W_{i,j})_j \in \tilde{S}^{W_i^{*}}(\alpha(w^{(i)}_0)\cap W_i^{*},\dots,\alpha(w^{(i)}_{\absval{w^{(i)}}-1})\cap W_i^{*},
\emptyset,\emptyset,\dots)\subseteq\Delta^0_1(W_i^{*})\subseteq\Delta^0_1(X).
\end{equation*}
In fact a
$(\delta_{\Sigma^0_1(X)}^{\ordomega},\delta_{\Sigma^0_1(X)}^{\ordomega})$-realiser for $\tilde{S}^X$
will also $(\delta^{\ordomega},\delta^{\ordomega})$-realise $\tilde{S}^Y$ for any $Y\subseteq X$ if $\delta$ is
the representation of $\Sigma^0_1(Y)$ defined from the effective topological space
$(Y,\mathcal{T}_X|_Y,\alpha_Y)$.
A similar statement is true for $R^Y$, so each
$R^{W_i^{*}}\compose\tilde{S}^{W_i^{*}}:\subseteq\Sigma^0_1(W_i^{*})^{\N}\crsarr\Delta^0_1(W_i^{*})^{\N}$ is computable, uniformly in $i$, as
are the inclusions $\Delta^0_1(W_i^{*})\to\Delta^0_1(X)$ (use $\delta_{\Delta^0_1(X)}$-names of $W_i^{*}$ and computability of binary intersection on $\Sigma^0_1(X),\Pi^0_1(X)$ respectively).

Letting $r_i:=\absval{w^{(i)}}$, $W_{\sum_{i<k}r_i+j}:=W_{k,j}$ ($j<\absval{w^{(k)}}$, $k\in\N$), we have sequences $r$, $(W_i)_i$ almost as in definition of $\hat{D}$.  To prove $\hat{D}$ computable it remains to ensure $r_i\geq 1$ for all $i$ and detect nonemptiness of the $W_i$.
From a $\delta_{\Delta^0_1(X)}$-name of $W_{i,j}$ and $\deltac$-name of $K$ ($\supseteq U\supseteq W_{i,j}$), a
$\delta_{\KO}$-name of $W_{i,j}$ is computable.  Also, $z|^{\KO}$ is $(\delta_{\KO},\delta_{\N}|^{\{0,1\}})$ computable where
\begin{equation*}
z:\Delta^0_1(X)\to\{0,1\},W\mapsto \begin{cases}
0, &\mbox{\textrm{ if $W=\emptyset$}}\\
1, &\mbox{\textrm{ if $W\neq\emptyset$}}\end{cases}
\end{equation*}\enlargethispage{\baselineskip}
Fixing some $a_0\in\dom\alpha=\N$ we modify the above argument to pick $w^{(i)}$ as a one-element cover $a_0\in\N\subseteq\N^{*}$
if $W_i^{*}=\emptyset$, and choose $w^{(i)}$ irredundant otherwise (nonemptiness of $\alpha(w^{(i)}_j)=\alpha(w^{(i)}_j)\cap W_i^{*}$
is clearly decidable without using $z$).  Then $r_i\geq 1$ for all $i$.

This completes the proof.
\end{proof}

As an application of \pref{prn:kzd} (using the operation $\hat{D}$), we present an effectivisation
\pref{thm:retracta} of (\ref{zdix}),
the retract characterisation of zero-dimensionality from \pref{sec:intro}.  In \pref{sec:retractsec}
a converse to this result will be proven.
Before stating the theorem, we give two lemmas relevant for dealing with compactness in situations
involving the representations $\delta_{\textrm{dist}}^{>}$, $\deltar$.
For any closed $A,B\subseteq X$, denote $d(A,B):=\inf\{d(x,y)\setconstr x\in A, y\in B\}$ with the convention
$\inf\emptyset=\infty$.
\begin{lem}\label{lem:infdist}
For any computable metric space $X$, $\hat{d}:\subseteq\mathcal{A}(X)\times\mathcal{K}(X)\to\overline{\R},
(A,K)\mapsto d(A,K)$ ($\dom\hat{d}=\{(A,K)\setconstr A\neq\emptyset\}$) is
$([\delta_{\textrm{dist}}^{>},\deltac],\overline{\rho_{<}})$-computable.
\end{lem}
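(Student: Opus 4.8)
The plan is to produce a $\overline{\rho_{<}}$-name of $d(A,K)$, i.e.\ to enumerate exactly the rationals $q$ with $q<d(A,K)$, by combining the cover information for $K$ supplied by $\deltac$ with lower bounds on the map $x\mapsto d(x,A)$ supplied by $\delta_{\textrm{dist}}^{>}$. From a $\delta_{\textrm{dist}}^{>}$-name $p$ of $A$ I would extract the realiser $\eta_p$; applied to the constant $\delta_X$-name $c^{\ordomega}$ of an ideal point $\nu(c)$ (a legitimate Cauchy name, since $d(\nu(c),\nu(c))=0$), it returns a $\overline{\rho_{<}}$-name of $d(\nu(c),A)$, i.e.\ an enumeration of rationals strictly below $d(\nu(c),A)$. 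Simultaneously, from a $\deltac$-name of $K$ I would enumerate all ideal covers $w=(\langle c_i,r_i\rangle)_{i<m}$ with $U\langle w\rangle\supseteq K$, abbreviating $\rho_i:=\nu_{\Qplus}(r_i)$.

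I would then emit a rational $q$ exactly when the following certificate is found by dovetailed search: an enumerated ideal cover $w$ of $K$ together with, for each $i<m$, a rational $\ell_i$ appearing in the $\overline{\rho_{<}}$-name of $d(\nu(c_i),A)$ (so that $\ell_i<d(\nu(c_i),A)$) satisfying $\ell_i-\rho_i>q$. The search ranges over all $q$, all covers $w$, and all stages of the lower-bound enumerations, and the output is written in $\overline{\rho_{<}}$ format, padding with zeros between discoveries. Note the empty cover $w=\emptystring$ qualifies only when $K=\emptyset$, in which case its certificate condition is vacuous and every $q$ is emitted, correctly matching $d(A,\emptyset)=\infty$.

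Soundness follows from the triangle inequality: if $y$ lies in the ball of index $i$ then $d(\nu(c_i),y)<\rho_i$, whence $d(y,A)\geq d(\nu(c_i),A)-d(\nu(c_i),y)>d(\nu(c_i),A)-\rho_i>\ell_i-\rho_i>q$. Since the balls of $w$ cover $K$, taking the minimum over the finitely many indices gives $d(A,K)=\inf_{y\in K}d(y,A)\geq\min_{i<m}(\ell_i-\rho_i)>q$, so each emitted $q$ genuinely lies below $d(A,K)$ (the case $K=\emptyset$ being immediate). Here I use that the enumerated $w$ really does cover $K$, so that every $y\in K$ falls into some ball, ruling out spurious certificates for nonempty $K$ since the empty cover is then never enumerated.

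For completeness, suppose $q<d(A,K)$ and put $\delta:=d(A,K)-q>0$. Each $y\in K$ has $d(y,A)\geq d(A,K)=q+\delta$, so choosing an ideal point $\nu(c)$ with $d(y,\nu(c))<\delta/4$ and a rational radius $\rho=\delta/4$ yields a ball about $y$ with $d(\nu(c),A)-\rho\geq(q+\delta-\delta/4)-\delta/4=q+\delta/2$; a sufficiently advanced lower bound $\ell<d(\nu(c),A)$ then satisfies $\ell-\rho>q$. By compactness finitely many such balls cover $K$, forming an ideal cover enumerated in the $\deltac$-name, so the corresponding certificate is eventually found and $q$ is emitted. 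I expect this completeness step to be the main point requiring care — establishing that an ideal cover realising the uniform margin $\ell_i-\rho_i>q$ both exists (via compactness of $K$ and the $1$-Lipschitz continuity of $d(\cdot,A)$) and appears among the covers enumerated by $\deltac$; the soundness estimate and the encoding into $\overline{\rho_{<}}$ are routine.
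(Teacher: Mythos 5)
Your proposal is correct and follows essentially the same route as the paper's proof: the paper establishes the equivalence that $d(A,K)>r$ iff some ideal cover $w$ enumerated in the $\deltac$-name satisfies $d_A(\nu(\pr_1 w_i))-\nu_{\Qplus}(\pr_2 w_i)>r$ for every $i<\absval{w}$, witnessed by rational lower bounds obtained from $\eta_p$ at the ideal centers --- exactly your certificate $\ell_i-\rho_i>q$ --- with the same soundness argument (the $1$-Lipschitz property of $d_A$ plus the covering property), the same completeness argument (density of $\nu$, continuity of $d_A$ and compactness of $K$), and the same observation that the case $K=\emptyset$ is handled by the empty cover. The only cosmetic repair needed is in your completeness step, where $\delta/4$ need not be rational: take the radius to be a rational in $(\delta/8,\delta/4)$ and the ideal center within $\delta/8$ of $y$, and the estimates go through unchanged.
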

\begin{proof}
Suppose $p\in(\delta_{\textrm{dist}}^{>})^{-1}\{A\}$, $q\in\deltac^{-1}\{K\}$, $r\in\Q$.  Then we claim
\begin{align*}
d(A,K)>r & \iff (\exists n)(\exists w\in\N^{*})\left( q_n=\langle w\rangle\wedge
(\forall i<\absval{w})\left(
 d_A(\nu(\pr_1 w_i))-\nu_{\Qplus}(\pr_2 w_i) > r \right) \right)\\
 & \iff (\exists n)(\exists w\in\N^{*})\bigl( q_n=\langle w\rangle \wedge \\
& (\forall i<\absval{w})(\exists j,k)
\left( (\eta_p\compose F)(\pr_1 w_i.0^{\ordomega})_j=k+1\wedge \nu_{\Q}(k)>r+\nu_{\Qplus}(\pr_2 w_i) \right)
\bigr)
\end{align*}
where $F:\subseteq\Bairespc\to\Bairespc$ is a computable $(\delta_{\N},\delta_X)$-realiser of $\nu:\N\to X$.

For the first equivalence, if $d(A,K)>r$ then every $x\in K$ has $d_A(x)>r$ and by density of $\nu$ and continuity of $d_A$ there exists $a\in\N$ such that $x\in\alpha(a)$ and
$(d_A\compose\nu)(\pr_1 a)>r+\nu_{\Qplus}(\pr_2 a)$.  Compactness gives an ideal cover
$w$ as required.  Conversely, given such $w$, any $x\in K$ has some $i<\absval{w}$ such that $x\in\alpha(w_i)$, so
$d_A(x)>(d_A\compose\nu)(\pr_1 w_i)-\nu_{\Qplus}(\pr_2 w_i)$.  Now $d(A,K)=\inf_{x\in K}d_A(x)\geq
\min_{i<\absval{w}}\left( (d_A\compose\nu)(\pr_1 w_i)-\nu_{\Qplus}(\pr_2 w_i) \right) > r$.  One checks this
argument works for $K=\emptyset$ also.
The second equivalence follows from $p\in (\delta_{\textrm{dist}}^{>})^{-1}\{A\}$.
\end{proof}

\begin{lem}\label{lem:coverlem}
Let $X$ be a computable metric space.  If $K\subseteq X$ is compact and $K\subseteq\nbhd{\cl{S}}{\epsilon}$ then
there exist $(s_i)_{i<n}\subseteq S$ and an ideal cover $v\in\N^{*}$ of $K$ such that $v$ `formally refines'
$(\ball{s_i}{\epsilon})_{i<n}$,
i.e.~for every $i<\absval{v}$ there exists $j<n$
such that $d(\nu(\pr_1 v_i),s_j)+\nu_{\Qplus}(\pr_2 v_i)<\epsilon$.
\end{lem}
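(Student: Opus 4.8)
The plan is to reduce the statement to a routine finite-subcover extraction: around each point of $K$ I will produce a single ideal ball that contains the point and is formally contained in $\ball{s}{\epsilon}$ for a suitable $s\in S$, and then invoke compactness of $K$.

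First I would fix $x\in K$. Since $x\in\nbhd{\cl S}{\epsilon}$ there is $y\in\cl S$ with $d(x,y)<\epsilon$, and as $\cl S$ is the closure of $S$ I can choose $s_x\in S$ as close to $y$ as needed; in particular $d(x,s_x)<\epsilon$, so $\eta_x:=\epsilon-d(x,s_x)>0$. Using density of $\nu$ I pick a center index $c_x$ with $d(\nu(c_x),x)<\eta_x/4$ and then a rational $\rho_x$ with $d(\nu(c_x),x)<\rho_x<\eta_x/3$. Writing $a_x:=\langle c_x,\overline{\rho_x}\rangle$, the ideal ball $\alpha(a_x)=\ball{\nu(c_x)}{\rho_x}$ then contains $x$, while the triangle inequality yields
\begin{equation*}
d(\nu(c_x),s_x)+\rho_x\leq d(\nu(c_x),x)+d(x,s_x)+\rho_x<2\rho_x+(\epsilon-\eta_x)<\frac{2\eta_x}{3}+\epsilon-\eta_x=\epsilon-\frac{\eta_x}{3}<\epsilon,
\end{equation*}
so $\alpha(a_x)$ formally refines $\ball{s_x}{\epsilon}$.

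The family $\{\alpha(a_x)\setconstr x\in K\}$ is then an open cover of $K$, so by compactness I can select a finite subcover $\alpha(a_{x_0}),\dots,\alpha(a_{x_{n-1}})$. Taking $v\in\N^{*}$ to be the word $a_{x_0}\dots a_{x_{n-1}}$ and $s_i:=s_{x_i}$ ($i<n$) yields an ideal cover of $K$ for which each $\alpha(v_i)$ formally refines $\ball{s_i}{\epsilon}$ (with $j=i$), by the displayed inequality; the empty case $K=\emptyset$ is covered by the empty word. The one place that needs care is the $\epsilon$-bookkeeping in the middle step: the center must be chosen close enough, and the rational radius small enough, that $x$ still lies in the ideal ball while the formal-refinement inequality survives. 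Everything else is a standard compactness argument.
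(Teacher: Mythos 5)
Your proof is correct and follows essentially the same route as the paper's: around each point of $K$ construct an ideal ball containing that point and formally refining an $\epsilon$-ball about a nearby point of $S$, then extract a finite subcover by compactness. The only inessential difference is that you invoke compactness once, carrying the chosen points $s_x$ along with the ideal balls, whereas the paper first uses compactness to pass to a finite subcover $(\ball{s_i}{\epsilon})_{i<n}$ and then applies it a second time to get the ideal cover.
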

\begin{proof}
Whenever $x\in\ball{s}{\epsilon}$ we can pick $q\in\Qplus$ with $d(x,s)+q<\epsilon$, then $a\in
\nu^{-1}( \ball{x}{q}\cap\ball{s}{\epsilon-q} )$ (so $b=\langle a,\overline{q}\rangle$
satisfies $x\in\alpha(b)$ and $d(\nu(\pr_1 b),s)+\nu_{\Qplus}(\pr_2 b)<\epsilon$).  But applying compactness once gives $K\subseteq\bigcup_{i<n}\ball{s_i}{\epsilon}$ for some $(s_i)_{i<n}\subseteq S$, and again gives an
ideal cover as desired.
\end{proof}

\begin{thm}(cf.~\cite[Cor 26.II.2]{Kuratowski})\label{thm:retracta}
Suppose $X$ is $\deltadc$-computable.  Then
$E:\subseteq\mathcal{A}(X)\crsarr C(X,X),A\mapsto \{f\setconstr\img f=A\wedge f|_A=\id_A\}$
($\dom E=\mathcal{A}(X)\setminus\{\emptyset\}$) is well-defined and computable (where $\mathcal{A}(X)$ is represented by $\deltar\sqcap\delta_{\textrm{dist}}^{>}$).
\end{thm}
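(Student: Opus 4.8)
The plan is to build the retraction explicitly: decompose $X\setminus A$ into small clopen pieces using the operation $\hat{D}$, collapse each piece to a nearby point of $A$, keep the identity on $A$, and then verify continuity and computability. First I would extract the relevant data. Since $X$ is $\deltadc$-computable, \pref{prn:kzd} gives that $\hat{D}$ is computable, and from a $\deltadc$-name of $X$ one computes a $\deltac$-name of the compact space $X$ together with a $\delta_{\Sigma^0_1}$-name of the open set $X\setminus A$ (using the lower bounds on $d_A$ supplied by the $\delta_{\textrm{dist}}^{>}$ part of the input name of $A$). Applying $\hat{D}$ to $(X\setminus A,X)$ then yields clopen, pairwise disjoint $(W_i)_i$ with $\bigcup_i W_i=X\setminus A$, emptiness flags $s_i$, and the block bound: if $\sum_{m<n}r_m\le i<\sum_{m\le n}r_m$ then $\diam W_i<(n+1)^{-1}$, so in particular $\diam W_i\to 0$ as $i\to\infty$. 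The $\deltar$ part of the input supplies a computable sequence $(a_j)_j$ dense in $A$.

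Next I would define $f$. For each nonempty $W_i$ (those with $s_i=1$) I pick an ideal point $w_i\in W_i$ from the $\delta_{\Sigma^0_1}$-name of $W_i$, compute from the $\deltar$-upper bounds an $\epsilon_i$ with $d_A(w_i)<\epsilon_i<d_A(w_i)+2^{-i}$, and apply \pref{lem:coverlem} with $K=\{w_i\}$ and $S=\{a_j\}$ (so $\cl{S}=A$) to obtain $a_i\in A$ with $d(w_i,a_i)<\epsilon_i$; then set $f\equiv a_i$ on $W_i$ and $f=\id$ on $A$. By construction $\img f=A$ and $f|_A=\id_A$, and for $x\in W_i$ the triangle inequality together with $d_A(w_i)\le d(W_i,A)+\diam W_i$ gives the estimate $d(f(x),x)\le d(x,A)+2\diam W_i+2^{-i}$. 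For continuity at $a\in A$, take $x_m\to a$: each $W_i$ is closed and misses $a$, so for every $i$ only finitely many $x_m$ lie in $W_i$, forcing the indices $i_m$ with $x_m\in W_{i_m}$ to tend to infinity, whence $\diam W_{i_m}\to 0$ and $2^{-i_m}\to 0$ and the estimate yields $f(x_m)\to a$; on the open set $X\setminus A$ the map $f$ is locally constant on the clopen pieces, hence continuous. Thus $f\in\Cont(X,X)$ and $E$ is well-defined on $\mathcal{A}(X)\setminus\{\emptyset\}$.

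Finally, the computability of a $[\delta_X\to\delta_X]$-name of $f$, uniformly in the input name of $A$, is where the real work lies. Given a $\delta_X$-name of $x$ and a target precision $2^{-k}$, I would dovetail two semidecision procedures: (a) searching for an index $i$ with $x\in W_i$ (semidecidable, since $W_i\in\Delta^0_1(X)$), in which case $f(x)=a_i$ is output to precision $2^{-k}$; and (b) confirming from the $\deltar$-upper bounds that $d(x,A)<\theta$ for a suitable threshold $\theta$, in which case $x$ itself is output as a $2^{-k}$-approximation of $f(x)$. I expect the main obstacle to be choosing $\theta$ so that branch (b) is sound: it must exclude the finitely many ``large'' pieces --- those lying in blocks $n$ with $(n+1)^{-1}\ge\epsilon_0$ for a chosen $\epsilon_0\approx 2^{-k}$ --- which could be close to $A$ while having large diameter. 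Since each such $W_i$ is clopen, hence compact, and disjoint from the compact set $A$, one has $d(W_i,A)>0$, and \pref{lem:infdist} lets me compute from below a positive lower bound $\delta_0$ on these finitely many distances; taking $\theta<\min(\delta_0,2^{-k-2})$ guarantees that whenever (b) fires, $x$ lies in $A$ or in a small piece, so the estimate bounds $d(f(x),x)$ safely below $2^{-k}$.

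At each precision exactly one branch resolves ($x\in A$ triggers (b) since $d(x,A)=0$; $x\in W_i$ triggers (a)), and whichever fires first yields a correct approximation, so collating these outputs produces a fast Cauchy name of $f(x)$. All of the above --- extracting the names, running $\hat{D}$, choosing the $a_i$ via \pref{lem:coverlem}, and computing the thresholds via \pref{lem:infdist} --- is uniform in the $(\deltar\sqcap\delta_{\textrm{dist}}^{>})$-name of $A$, which gives the computability of $E$.
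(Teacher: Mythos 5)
Your retraction is the paper's retraction: both proofs feed $(X\setminus A,X)$ into $\hat{D}$ (computable by \pref{prn:kzd}), pick in each nonempty piece $W_i$ a point and a nearly nearest point of $A$, and collapse $W_i$ to that point (the paper chooses $y_i$ with $d(x_i,y_i)<d(A,W_i)+(i+1)^{-1}$, you choose $a_i$ with $d(w_i,a_i)<d_A(w_i)+2^{-i}$; either choice yields the estimate needed for continuity at points of $A$). The genuine difference is the computability verification, which you correctly identify as the real work. The paper shows $V\mapsto f^{-1}V$ is computable: it introduces the semidecidable predicates $R_0$ (for $x$ in some $W_i$) and $R_1$ (for $x\in A$, which needs a formal witness, via \pref{lem:coverlem}, that $\bigcap_{i<N}(X\setminus W_i)$ lies in a small neighbourhood of $A$), and assembles the open preimage using the effective Lindel\"{o}f lemma (\pref{lem:ctslindelof}). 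You instead compute the evaluation $x\mapsto f(x)$ directly: dovetail the semidecision ``$x\in W_i$ for some $i$'' against the proximity test ``$d_A(x)<\theta$'', where $\theta$ is a positive rational below both $2^{-k-2}$ and a lower bound---obtained from \pref{lem:infdist}, applied to the $\delta_{\textrm{dist}}^{>}$-part of the input and $\deltac$-names of the pieces (computable via $\cap:\Pi^0_1(X)\times\mathcal{K}_{>}(X)\to\mathcal{K}_{>}(X)$)---on the distances from $A$ to the finitely many large pieces. Your threshold argument is sound: if $d_A(x)<\theta$ then $x$ lies in $A$ or in a piece of small diameter and large index (here you implicitly use $r_n\geq 1$, which the definition of $\hat{D}$ guarantees, so that pieces in block $n$ have index at least $n$), and the estimate $d(f(x),x)\leq d_A(x)+2\diam W_i+2^{-i}$ applies; completeness holds because at least one branch always fires, and both branches are sound. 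Your route buys a more elementary argument---no \pref{lem:ctslindelof}, no formal-inclusion bookkeeping---at the price of an appeal to type conversion (the smn/utm properties of $[\delta_X\to\delta_X]$) to turn the evaluation procedure into a name of $f$; the paper's preimage route stays entirely inside the open-set representation machinery it has already set up, which is what its Lindel\"{o}f lemma was designed for.

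Three small repairs, none affecting correctness. First, \pref{lem:coverlem} is the wrong tool for choosing $a_i$: that lemma produces ideal covers formally refining ball covers of a compact set; what you actually need is a plain search along the dense sequence supplied by the $\deltar$-part of the name of $A$, which terminates because $d_A(w_i)$ is computable from below (via $\delta_{\textrm{dist}}^{>}$) and from above (via $\deltar$). Second, your constants must be a notch smaller than stated, e.g.~$\epsilon_0=2^{-k-3}$ rather than $\epsilon_0\approx 2^{-k}$, since the estimate contains the term $2\diam W_i$. Third, ``at each precision exactly one branch resolves'' should read ``at least one'': both branches may fire (a point near $A$ inside a small piece), but since each is sound this is all the argument needs.
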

\begin{proof}[Proof Sketch]
First (by \pref{prn:kzd}) recall $\hat{D}:\subseteq\Sigma^0_1(X)\times\mathcal{K}_{>}(X)\crsarr
\Delta^0_1(X)^{\N}\times(\Zplus)^{\N}\times\{0,1\}^{\N}$ is computable, say let $G$ be a computable realiser.
For a fixed name of $A\in\mathcal{A}(X)\setminus\{\emptyset\}$ as input, consider corresponding
$((W_i)_i,\xi,s)\in\hat{D}(X\setminus A,X)$ and pick
$(x_i\in W_i, \text{ if $s_i\neq 0$; } x_i\in\img\nu, \text{ if $s_i=0$})$
and also $y_i\in A$ computably such that $d(x_i,y_i)<d(A,W_i)+(i+1)^{-1}$ ($i\in\N$).  This is possible since
$d(A,W_i)$ is computable from below
uniformly in the input and $i$ (use \pref{lem:infdist}), and since $\deltar$-names of $A$, $W_i$ are available.

Next define
\begin{equation*}
f:X\to X,x\mapsto\begin{cases}
x, &\mbox{\textrm{ if $x\in A$}}\\
y_i, &\mbox{\textrm{ if $(\exists i)W_i\ni x$}}.
\end{cases}
\end{equation*}
That $f$ is continuous is shown by Kuratowski; we will check $f$ is computable in the inputs directly by
showing $f^{-1}V$ is computable uniformly in the inputs and a $\delta_{\Sigma^0_1}$-name of
$V\in\Sigma^0_1(X)$.  Roughly speaking, we consider (instead of disjoint cases as in the definition of $f$) a
disjunction $(\exists i)x\in W_i\vee (\exists N)(x\in\bigcap_{i<N}(X\setminus W_i))$ where in the second case
$N$ has to be suitably large.  This will be used
to define computable $F:\subseteq\Bairespc\to\Bairespc,\langle\langle p,q\rangle,r\rangle\mapsto t$ so that
each induced function $u=\delta_{\Sigma^0_1}\compose F\langle\langle\cdot,q\rangle,r\rangle:\dom\delta_X \to
\Sigma^0_1(X)$ satisfies ($\delta_X(p)\in u(p)\subseteq f^{-1}\delta_{\Sigma^0_1}(r)$, if
$p\in\delta_X^{-1}f^{-1}\delta_{\Sigma^0_1}(r)$;
$u(p)=\emptyset$, if $p\in\dom\delta_X\setminus\delta_X^{-1}f^{-1}\delta_{\Sigma^0_1}(r)$).  Then \pref{lem:ctslindelof}
can be applied to computably obtain a name for $f^{-1}\delta_{\Sigma^0_1}(r)$ ($f$ being dependent on
$q=\langle\langle q^{(0)},\dots\rangle,\xi,s,\langle t^{(0)},\dots\rangle\rangle$ where $q^{(i)}\in\delta_{\Delta^0_1}^{-1}\{W_i\}$, $t^{(i)}\in\delta_X^{-1}\{y_i\}$ for all $i\in\N$).

To define $t$, we
dovetail repeated output of `$0$' with searching for large $M,N$ and an ideal ball
$a=\langle p_j,\overline{2^{-j+1}}\rangle$ small enough to satisfy
\begin{equation*}
R_0(p,q,r,j) :\equiv
(\exists i,k,l,m)\left( q^{(i)}_{2 k}\geq 1\wedge a\sqsubset q^{(i)}_{2 k}-1\wedge r_l\geq 1\wedge
\langle t^{(i)}_m,\overline{2^{-m+1}}\rangle\sqsubset r_l-1 \right)
\end{equation*}
or
\begin{multline*}
R_1(p,q,r,j,M,N) :\equiv (\forall i<N)(\exists k)\left(
 q^{(i)}_{2 k+1}\geq 1\wedge a\sqsubset q^{(i)}_{2 k+1}-1 \right) \wedge
N\geq \sum_{i<M}\xi_i\wedge \\
2^{-j+2}\leq (M+1)^{-1}\wedge
(\exists v\in\N^{*})\big( \text{ $v$ appears in a $\deltac$-name for $\textstyle\bigcap_{i<N}(X\setminus W_i)$,}
\\
\text{$v$ `formally refines' a finite cover by $(M+1)^{-1}$-balls about points of the $\deltar$-name of $A$} \big) \\
\wedge
(\exists k)\left( r_k\geq 1\wedge\left\langle p_j,\overline{2^{-j+1}+\frac{3}{M+1}}\right\rangle
\sqsubset r_k-1\right)
\end{multline*}

If found we should output `$a+1$' followed by $0^{\ordomega}$.  Any $x\in f^{-1}V$ either has
$W_i\ni x$ for some $i$ or else $x\in A$.  In the former case, $y_i\in V$ and property (\ref{inclone}) from
\pref{def:incldef} applied
twice gives $R_0(p,q,r,j)$, so assume $x\in A$.  From $V\ni f x=x$ and $r\in\delta_{\Sigma^0_1}^{-1}\{V\}$ we
can pick $k,M$ such that $r_k\geq 1$ and $d(x,\nu(\pr_1(r_k-1)))+\frac{4}{M+1}<\nu_{\Qplus}(\pr_2(r_k-1))$,
then $N\geq\sum_{i<M}\xi_i$ ($\geq M$) such that $\bigcap_{i<N}(X\setminus W_i)\subseteq\nbhd{A}{(M+1)^{-1}}$, then
finally $j\in\N$, $v\in\N^{*}$ and $a=\langle p_j,\overline{2^{-j+1}}\rangle$ as follows: such that $v$ appears in a
$\deltac$-name for $\bigcap_{i<N}(X\setminus W_i)$, $v$ `formally refines' $(\ball{z_i}{(M+1)^{-1}})_{i<n}$ for
some $z_0,\dots,z_{n-1}\in A$ given by the input $\deltar$-name of $A$,
$a$ is `formally included' in $\bigcap_{i<N}(X\setminus W_i)$, $2^{-j+2}\leq (M+1)^{-1}$ and
$\langle p_j,\overline{2^{-j+1}+\frac{3}{M+1}}\rangle\sqsubset r_k-1$.  To see such $j$ exists use $\nu(p_j)\to x$, $2^{-j+1}\to 0$ and continuity of $d$ in inequalities corresponding to the last three requirements; to
see suitable $v$ exists use \pref{lem:coverlem}.
In this case one checks $R_1(p,q,r,j,M,N)$ holds.

Conversely, we will show $U:=\delta_{\Sigma^0_1}(t)$ must be contained in $f^{-1}V$,
indeed that any $j,M,N$ with $R_0(p,q,r,j)\vee R_1(p,q,r,j,M,N)$ must correspondingly satisfy
$\alpha(a)\subseteq f^{-1}V$ (where $a=\langle p_j,\overline{2^{-j+1}}\rangle$).  For, in the first
clause necessarily $f\alpha(a)\subseteq f(W_i)=\{y_i\}\subseteq V$, so we suppose the second clause holds.
Now any $z\in\alpha(a)$ has either $z\in A$ or $W_i\ni z$ for some $i$.  In the first case,
$f z=z\in\alpha(a)\subseteq\ball{\nu(p_j)}{2^{-j+1}+\frac{3}{M+1}}\subseteq V$.  In the second case,
$z\in\alpha(a)$ implies $i\geq N$ and $d_A(z)<(M+1)^{-1}$, so
\begin{align*}
d(f z,\nu(p_j)) & =d(y_i,\nu(p_j))\leq d_{W_i}(y_i)+\diam W_i+d(z,\nu(p_j)) \\
& <(d(A,W_i)+(i+1)^{-1})+\diam W_i+2^{-j+1}<\frac{3}{M+1}+2^{-j+1}.
\end{align*}
This completes the proof.
\end{proof}

\section{Bilocated subsets}\label{sec:retractsec}

In this final section, we present a converse to \pref{thm:retracta} (in other words, an effectivisation of
the reverse direction of \cite[Thm 7.3]{Kechris}), namely \pref{prn:retractb}.  This relies on a version of the
construction of so-called bilocated sets from the constructive analysis literature --- see \pref{prn:bilocated}.
Such a construction for us involves an application of the effective Baire category theorem and a decomposition
of compact sets formally different to that in \pref{sec:repnsec} (see \pref{thm:decomp}).  The proofs of both
\pref{thm:decomp} and \pref{prn:bilocated} are adapted to computable analysis in an ad hoc way (not following an
established interpretation of constructive proofs in this context).  It is also worth noting a constructive development of dimension theory exists \cite{RBCM76}, \cite{BJMR77} which, though based on information weaker
than we shall consider, does also use bilocated subsets fundamentally \cite[Thm 0.1]{BJMR77}.

We begin with several representations from \cite{BrattkaPresser}, namely with $\deltamc$ (similar to
$\deltac$ except that each ball of each ideal cover is required to intersect $K$), $\deltar'$ and
$\delta_{\textrm{Hausdorff}}$.  Here
$\langle q,p^{(0)},p^{(1)},\dots\rangle\in(\deltar')^{-1}\{K\}$ iff
$\{p^{(i)}\setconstr i\in\N\}\subseteq\dom\delta_X$,
$K=\clvar\{\delta_X(p^{(i)})\setconstr i\in\N\}$, $q$ is unbounded and
$\hausdmetr(K_i,K_j)<2^{-\min\{i,j\}}$ for all $i,j\in\N$,
where $K_i:=\{\delta_X(p^{(k)})\setconstr k\leq q_i\}$ ($i\in\N$).

To define $\delta_{\textrm{Hausdorff}}$, we first consider $\mathcal{K}(X)\setminus\{\emptyset\}$ metrized by the Hausdorff metric $\hausdmetr$ and denote
$\mathcal{Q}:=\{A\subseteq\img\nu\setconstr A\text{ finite, }A\neq\emptyset\}=E(\img\nu)\setminus\{\emptyset\}
\subseteq \mathcal{K}(X)\setminus\{\emptyset\}$
with numbering $\nu_{\mathcal{Q}}$ defined by
$\nu_{\mathcal{Q}}\langle w\rangle:=\{\nu(w_i)\setconstr i<\absval{w}\}$ for any
$w\in\N^{*}\setminus\{\emptystring\}$.  Then $p\in\delta_{\textrm{Hausdorff}}^{-1}\{K\}$ iff
$\img p\subseteq\dom\nu_{\mathcal{Q}}$, $\hausdmetr(\nu_{\mathcal{Q}}(p_i),\nu_{\mathcal{Q}}(p_j)) <
2^{-\min\{i,j\}}$ for all $i,j\in\N$ and $K=\lim_{i\to\infty}\nu_{\mathcal{Q}}(p_i)$ with respect to
$\hausdmetr$.  Most relevant below will be the following result from \cite[Thm 4.12]{BrattkaPresser}:
\begin{lem}
$\delta_{\textrm{Hausdorff}}\equiv\delta_{\textrm{range}}'\equiv\deltamc|^{\mathcal{K}(X)\setminus\{\emptyset\}}$.
\end{lem}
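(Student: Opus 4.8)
The plan is to show the three representations are pairwise equivalent by proving a cycle of computable reductions $\deltar' \leq \delta_{\textrm{Hausdorff}} \leq \deltamc \leq \deltar'$, each given by a computable procedure transforming one kind of name for a nonempty compact $K$ into another. I would use throughout that a $\delta_{\textrm{Hausdorff}}$-name is precisely a fast Cauchy sequence $(Q_i)_i$ of nonempty finite sets $Q_i=\nu_{\mathcal{Q}}(p_i)\subseteq\img\nu$ converging to $K$ in $\hausdmetr$, so that $\hausdmetr(Q_i,K)\leq 2^{-i}$, whereas a $\deltar'$-name packages genuine points $x^{(k)}=\delta_X(p^{(k)})\in K$ together with a modulus $q$ rendering the initial segments $K_i=\{x^{(0)},\dots,x^{(q_i)}\}$ a fast Hausdorff-Cauchy sequence, whence $\hausdmetr(K_i,K)\leq 2^{-i}$.

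The reduction $\deltar'\leq\delta_{\textrm{Hausdorff}}$ is the routine direction: since the $x^{(k)}$ already lie in $K$ and come with fast Cauchy names, I would, for each $i$, read off rational approximations of $x^{(k)}$ to within $2^{-i-2}$ for $k\leq q_i$, forming $Q_i\in\dom\nu_{\mathcal{Q}}$ with $\hausdmetr(Q_i,K)$ small; a fixed index shift then secures the rate $\hausdmetr(Q_i,Q_j)<2^{-\min\{i,j\}}$. For $\delta_{\textrm{Hausdorff}}\leq\deltamc$ I would enumerate exactly the ideal covers $w$ for which every ball $\alpha(w_j)$ meets $K$ and $\bigcup_j\alpha(w_j)\supseteq K$. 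The first condition is semidecidable because $d(\nu(\pr_1 w_j),K)=\lim_i d(\nu(\pr_1 w_j),Q_i)$ is computable with known rate, so the strict inequality against $\nu_{\Qplus}(\pr_2 w_j)$ can be confirmed. For the covering condition I would search for an index $i$ witnessing a margin, namely that each point of $Q_i$ lies in some $\alpha(w_j)$ whose radius formally contains the $2^{-i}$-neighbourhood of that point; since $K\subseteq\nbhd{Q_i}{2^{-i}}$ this forces $K\subseteq\bigcup_j\alpha(w_j)$. Soundness is immediate, and for completeness I would use compactness of $K$ inside the open union to extract a positive Lebesgue-type margin, so that the test eventually fires and every valid cover is listed.

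For $\deltamc\leq\deltar'$ I would, from the complete list of minimal covers, locate for each $n$ a listed cover $w^{(n)}$ of formal diameter $<2^{-n}$; such covers exist by covering $K$ with small balls about rational approximations of its points, and being valid they are enumerated. As each ball of $w^{(n)}$ meets $K$ and the balls cover $K$, the centre set lies within a fixed multiple of $2^{-n}$ of $K$ in $\hausdmetr$. To manufacture genuine points of $K$ with $\delta_X$-names I would chain centres across successive covers: from a centre of $w^{(n)}$, repeatedly select a centre of $w^{(n+1)}$ lying within a fixed multiple of $2^{-n}$ whose ball again meets $K$; the shrinking diameters make each chain a fast Cauchy sequence converging into $K$, and dovetailing over initial choices yields a dense sequence $(x^{(i)})$ in $K$. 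Finally I would compute the unbounded modulus $q$ by ensuring that by stage $i$ the enumerated points already include a $2^{-i}$-dense subset of $K$ read off from $w^{(i)}$, and setting $q_i$ accordingly.

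I expect this last reduction to be the \emph{main obstacle}: converting purely combinatorial covering data into honest $\delta_X$-names of points of $K$ while simultaneously guaranteeing density and the fast Hausdorff-Cauchy rate of the initial segments. The chaining must be arranged so that each chain converges with a uniform computable modulus and so that the induced $K_i$ satisfy $\hausdmetr(K_i,K_j)<2^{-\min\{i,j\}}$; reconciling these two requirements, together with the compactness-margin test used in the covering step, is where the real work lies.
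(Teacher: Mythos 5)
Your proposal cannot be compared against an in-paper proof for a simple reason: the paper does not prove this lemma at all, but quotes it from Brattka and Presser \cite[Thm 4.12]{BrattkaPresser}. What you have written is, in effect, a self-contained reconstruction of that result, and it is essentially correct. The cycle $\deltar'\leq\delta_{\textrm{Hausdorff}}\leq\deltamc|^{\mathcal{K}(X)\setminus\{\emptyset\}}\leq\deltar'$ is the right architecture, and the genuine content sits exactly where you locate it: for the middle reduction, semidecidability of ``ball meets $K$'' (via $d(\nu(\pr_1 w_j),K)=\lim_i d(\nu(\pr_1 w_j),Q_i)$ with known rate) and of ``covers $K$ with a margin'' (your Lebesgue-number test, whose soundness uses only $\hausdmetr(Q_i,K)\leq 2^{-i}$ plus the strict formal inequality, and whose completeness follows from compactness); for the last reduction, the chaining of centres, which is sound: if $c$ is the centre of a ball of radius $<2^{-n-1}$ in $w^{(n)}$ meeting $K$ at $x$, then $x$ lies in some ball of $w^{(n+1)}$ whose centre $c'$ satisfies $d(c,c')<2^{-n-1}+2^{-n-2}<2^{-n}$ strictly, so the semidecidable search always terminates; limits lie in $K$ since $d(c_m,K)<2^{-m-1}$ and $K$ is closed, and density plus the modulus $q$ follow from blockwise dovetailing as you indicate. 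One slip should be corrected: you ask for a listed cover $w^{(n)}$ ``of formal diameter $<2^{-n}$''. With the paper's definition (formal diameter of the \emph{whole} cover, which bounds $\diam\bigcup_i\alpha(w^{(n)}_i)$ from above), such a cover exists only when $\diam K<2^{-n}$, so for most $K$ your search would never terminate. What you need --- and what your subsequent argument actually uses --- is a listed cover \emph{each of whose balls} has radius (equivalently, individual formal diameter) below $2^{-n-1}$; this property is decidable from the syntax of the listed word, and such covers always exist and are enumerated. With that rewording, and the routine index shifts you already flag, the proof goes through; its value over the paper's treatment is that it makes the equivalence self-contained and exhibits the effective content that the citation hides.
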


On the decomposition of arbitrary compact sets, we then have the following result (a version of
\cite[Thm (4.8)]{BishopBridges}).
\begin{thm}\label{thm:decomp}
Let $X$ be a computable metric space.  Then $S:\mathcal{K}(X)\times\N\crsarr\mathcal{K}(X)^{*}$ defined by
\begin{equation*}
S(K,l):=\{K_0\dots K_{n-1}\setconstr K=\bigcup_{i<n}K_i,\:\max_{i<n}\diam K_i<2^{-l}\}
\end{equation*}
is $([\deltamc,\delta_{\N}],\deltamc^{*})$-computable.
\end{thm}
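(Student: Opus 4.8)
The plan is to fix the pieces at the set level using $\deltamc$, and then to produce a $\deltamc$-name of each piece by realising it as a Hausdorff limit of finite point sets. First I would search the given $\deltamc$-name of $K$ for a single ideal cover $w=(w_0,\dots,w_{p-1})\in\N^{*}$ of $K$ in which every ball $\alpha(w_i)=\ball{\nu(\pr_1 w_i)}{\nu_{\Qplus}(\pr_2 w_i)}$ has radius $<2^{-l-2}$; such a cover is enumerated by total boundedness, and for $K=\emptyset$ only the empty cover is enumerated (so one outputs the empty word). For $i<p$ set $L_i:=\cl{K\cap\alpha(w_i)}$. Then $\diam L_i\le 2\cdot 2^{-l-2}<2^{-l}$, each $L_i\subseteq K$ is compact, and $L_i\supseteq K\cap\alpha(w_i)\neq\emptyset$ since $\alpha(w_i)$ meets $K$ (a defining feature of $\deltamc$). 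As the open balls cover $K$ we get $\bigcup_{i<p}L_i\supseteq\bigcup_{i<p}(K\cap\alpha(w_i))=K$, while $L_i\subseteq K$ gives the reverse inclusion; hence $(L_0,\dots,L_{p-1})\in S(K,l)$ set-theoretically, and it remains to name each $L_i$ uniformly.

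To name the pieces I would pass to the Hausdorff representation, using $\delta_{\textrm{Hausdorff}}\equiv\deltamc|^{\mathcal{K}(X)\setminus\{\emptyset\}}$ (applicable since each $L_i\neq\emptyset$). Writing $c:=\nu(\pr_1 w_i)$ and $r:=\nu_{\Qplus}(\pr_2 w_i)$, for a parameter $\eta>0$ let $A_\eta$ be the finite set of points of an $\eta$-net of $K$ (read off from the name of $K$) whose distance to $c$ is verified to satisfy $d(\cdot,c)<r-\eta$. The strict margin forces each such net point to lie within $\eta$ of a genuine point of $K\cap\alpha(w_i)$, so $A_\eta\subseteq\nbhd{L_i}{\eta}$; and a compactness argument shows $A_\eta\to L_i$ in the Hausdorff metric, with $K\cap\alpha(w_i)\subseteq L_i\subseteq K\cap\clball{c}{r}$. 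The \emph{main obstacle} is that this convergence carries no modulus evident from the data: the rate at which interior points of $K$ approach the bounding sphere $\{d(\cdot,c)=r\}$ is uncontrolled (for a tangential or flat approach the nearest verified-interior net point to a boundary point of $L_i$ can sit at distance $\gg\eta$), so the raw sequence $(A_{2^{-m}})_m$ need not meet the Cauchy condition demanded by $\delta_{\textrm{Hausdorff}}$. I would resolve this adaptively: the sets $A_\eta$ are explicitly finite, so $\hausdmetr(A_\eta,A_{\eta'})$ is computable, and the convergent sequence $(A_{2^{-m}})_m$ is in particular Hausdorff-Cauchy; I therefore search for indices $\eta(0)>\eta(1)>\cdots$ with $\hausdmetr(A_{\eta(k)},A_{\eta(k-1)})<2^{-k}$ and output $(A_{\eta(k)})_k$. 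Telescoping yields $\hausdmetr(A_{\eta(i)},A_{\eta(j)})<2^{-\min(i,j)}$ for all $i,j$, so this is a genuine $\delta_{\textrm{Hausdorff}}$-name of $L_i$.

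Finally I would convert each $\delta_{\textrm{Hausdorff}}$-name back to a $\deltamc$-name via the equivalence above and assemble the $p$ names into a single $\deltamc^{*}$-name of $(L_0,\dots,L_{p-1})$. All of the above is uniform in $l$, in $i<p$, and in the input $[\deltamc,\delta_{\N}]$-name, and the empty case is handled by the empty cover as noted; this establishes that $S$ is $([\deltamc,\delta_{\N}],\deltamc^{*})$-computable.
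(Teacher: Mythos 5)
Your set-level decomposition is correct, and you have put your finger on exactly the right obstacle; but that obstacle is fatal to this approach, not merely an inconvenience, and the adaptive search does not get around it. The underlying difficulty is that once you commit to naming the \emph{specific} sets $L_i=\cl{K\cap\alpha(w_i)}$, the single-valued map you must realise in your second step is discontinuous, so no realiser --- computable or otherwise --- exists. Take $X=\R$, $l=1$, the cover $w=\langle B_1,B_2\rangle$ with $B_1=\ball{0}{0.1}$, $B_2=\ball{0.15}{0.1}$, and $K_s:=\{0,s\}$ for $s$ near $0.1$: each such $(K_s,w)$ is a legitimate instance (the two balls cover $K_s$, have radius $<2^{-l-2}$, and both meet $K_s$), yet $\cl{K_s\cap B_1}$ equals $\{0,s\}$ for $s<0.1$ and $\{0\}$ for $s=0.1$. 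A realiser run on a name of $K_{0.1}$ must commit, after reading a finite prefix of its input, to an output within $2^{-5}$ of $\{0\}$; that prefix is consistent with a $\deltamc$-name of $K_s$ for $s<0.1$ sufficiently close to $0.1$ (the finitely many covers listed in the prefix are open, so they still cover $K_s$ and all their balls still meet $K_s$), and on such an extension the already-emitted output can no longer converge to the required value $\{0,s\}$. Your greedy repair fails in precisely this way: it can hang. With a single-ball cover $B=\ball{0}{r}$, $r=2^{-l-3}$, and $K=\{0,s\}$ where $s=r-2^{-1000}$, the sets $A_\eta$ contain points near $s$ only when $\eta<2^{-1000}$; your search takes the first admissible index at each stage, so it walks down the above-critical indices one stage at a time (nothing ever forces it to cross), and by the stage at which only sub-critical indices remain the threshold $2^{-k}$ is astronomically smaller than the jump $s\approx r$, so \emph{no} admissible $\eta(k)$ exists and the stage-$k$ search runs forever. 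Cauchyness of $(A_{2^{-m}})_m$ guarantees that all sufficiently late pairs are close; it does not guarantee that any late term is $2^{-k}$-close to the premature term $A_{\eta(k-1)}$ to which you are already committed. (There is also the lesser point that $d(y,c)<r-\eta$ is only semi-decidable, so even defining $A_\eta$ as a computable finite set needs a two-sided tolerance scheme.)

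The paper's proof is shaped by exactly this obstruction: it never fixes the pieces in advance, and the multivaluedness of $S$ is essential rather than cosmetic. Following Bishop--Bridges, it starts from singletons of a fine net of $K$ and grows the $j$-th piece by adjoining points of successively finer nets, where the decision whether a net point joins piece $j$ is made by a soft test with overlapping thresholds (the partition of index pairs in the paper's proof); this guarantees $\hausdmetr(X^i_j,X^{i+1}_j)\leq 3^{-i-1}2^{-l}$ \emph{by construction}, so the modulus your approach lacks is built in, and the pieces are simply whatever closed sets the run of the algorithm produces, their ``boundaries'' lying wherever the soft decisions happened to place them. Note also that your second step is exactly the problem of showing that an ideal ball is bilocated with respect to $K$; the paper obtains this only for \emph{generic} radii, via the effective Baire category theorem (Proposition~\ref{prn:bilocated}), and that proposition is deduced from Theorem~\ref{thm:decomp}, so a proof of the decomposition theorem presupposing it would have the dependency backwards.
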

\begin{proof}[Proof sketch]
Assume $F:\subseteq\Bairespc\to\Bairespc$ is a computable witness of
$\deltamc|^{\mathcal{K}(X)\setminus\{\emptyset\}}\leq\delta_{\textrm{range}}'$.
Given $p\in\deltamc^{-1}\{K\}$ and $l$, compute $n\in\N$ and singletons $X^0_j\subseteq K$ ($j<n$) such that
$(\forall x\in K)(\min_{j<n}d_{X^0_j}(x)<3^{-2}2^{-l})$; for instance, use appropriately the $(l+4)^{\Th}$
finite approximation to $K$ from the $\delta_{\textrm{range}}'$-name $F(p)$.  Similarly, for each $i\in\N$, we
define $X^{i+1}_j$ ($j<n$) in terms of corresponding $X^i_j$ as follows: find a strict
$3^{-i-3}2^{-l}$-approximation $\{x_j\setconstr j<N\}\subseteq K$ to
$K$ (using $F(p)$ appropriately); compute some partition $S\disjunion T=[0,N)\times [0,n)\subseteq\N^2$ where
\begin{equation*}
(m,j)\in S\implies d_{X^i_j}(x_m)<3^{-i-1}2^{-l} \onespace\text{ and }\onespace
(m,j)\in T\implies d_{X^i_j}(x_m)>3^{-i-1}2^{-l-1};
\end{equation*}
for $j<n$ let $X^{i+1}_j:=X^i_j\union\{x_m\setconstr m<N,\: (m,j)\in S\}$.  The finite sets $X^i_j$ ($j<n$,
$i\in\N$) thus defined easily satisfy the first two properties of
\begin{enumerate}
\item\label{decompone} $X^i_j\subseteq X^{i+1}_j$,
\item\label{decomptwo} $(\forall x\in K)(\forall j<n)\left( x\in X^{i+1}_j\implies d_{X^i_j}(x)<3^{-i-1}2^{-l} \right)$,
\item\label{decompthree} $(\forall x\in K)(\forall j<n)\left( d_{X^i_j}(x)<3^{-i-2}2^{-l}\implies d_{X^{i+1}_j}(x)<3^{-i-3}2^{-l} \right)$.
\end{enumerate}
For the third, let $x\in K$ with $d_{X^i_j}(x)<3^{-i-2}2^{-l}$ and choose $m<N$ such that
$d(x,x_m)<3^{-i-3}2^{-l}$.
We have
\begin{align*}
& d_{X^i_j}(x_m)\leq d(x_m,x)+d_{X^i_j}(x)<(3^{-i-3}+3^{-i-2})2^{-l}<3^{-i-1}2^{-l-1} \\
& \implies (m,j)\not\in T \implies (m,j)\in S\implies x_m\in X^{i+1}_j.
\end{align*}
It follows that $d_{X^{i+1}_j}(x)\leq d(x,x_m)<3^{-i-3}2^{-l}$.

We now check $Y_j:=\bigcup_{i\in\N}X^i_j$, or rather $X_j:=\cl{Y_j}$ ($j<n$) satisfy total boundedness and the diameter condition.  First, consider
$m\in\N$ and $y\in Y_j$.  For $i$ with $y\in X^i_j$, either $i\leq m$ (so $d_{X^m_j}(y)=0$) or $i>m$.  In the latter case,
$(\ref{decomptwo})|_{k\in [m,i)}$ allows to construct $(y_k)_{k=m}^i$ with
\begin{equation*}
y_i=y\wedge (\forall k)(m\leq k<i\implies
y_k\in X^k_j\wedge d(y_k,y_{k+1})<3^{-k-1}2^{-l})
\end{equation*}
(that is, if $y_{k+1}\in X^{k+1}_j$, pick $y_k\in X^k_j$ such that
$d(y_{k+1},y_k)<3^{-k-1}2^{-l}$, inductively for $k=i-1,\dots,m$).

Then
$d_{X^m_j}(y)\leq d(y_i,y_m)\leq \sum_{m\leq k<i}d(y_k,y_{k+1})<\sum_{k\geq m}3^{-k-1}2^{-l}=
\frac{3^{-m-1}2^{-l}}{1-3^{-1}} =
2^{-l-1}3^{-m}$.
As $X^m_j$ is a finite $3^{-m}2^{-l-1}$-approximation to $Y_j$, it is also a finite
$3^{-m}2^{-l}$-approximation to $X_j$.  Since
$m$ was arbitrary, $X_j$ is totally bounded.  Next consider $i\in\N$ and $x,x'\in X^{i+1}_j$; we have
\begin{equation*}
d(x,x')\leq
d_{X^i_j}(x)+\diam X^i_j+d_{X^i_j}(x') < 3^{-i-1}2^{-l+1}+\sum_{k=1}^i 3^{-k}2^{-l+1} = \sum_{k=1}^{i+1}3^{-k}2^{-l+1}
\end{equation*}
provided
$\diam X^i_j\leq \sum_{k=1}^i 3^{-k}2^{-l+1}$.  Plainly the latter condition holds for $i=0$, so an inductive argument applies.  In particular,
$\diam X_j\leq \sum_{k=1}^{\infty}3^{-k}2^{-l+1}=\frac{3^{-1}2^{-l+1}}{1-3^{-1}}=2^{-l}$.  Finally, if $x\in K$, pick $j<n$ such that
$d_{X^0_j}(x)<3^{-2}2^{-l}$.  By induction on $i\in\N$ using (\ref{decompthree}) we have $d_{X^i_j}(x)<3^{-i-2}2^{-l}$ for all $i$ (case $i=0$ by
choice of $j$).  Thus $Y_j=\bigcup_i X^i_j$ contains points arbitrarily close to $x$, i.e.~$x\in\cl{Y_j}=X_j$.

Using the above construction, observe (\ref{decompone}), (\ref{decomptwo}) imply $\hausdmetr(X^i_j,X^{i+1}_j)\leq 3^{-i-1}2^{-l}$, so if $i'\geq i$ then
\begin{equation*}
\hausdmetr(X^i_j,X^{i'}_j)\leq\sum_{i\leq k<i'}\hausdmetr(X^k_j,X^{k+1}_j) \leq \sum_{i\leq k<i'}3^{-k-1}2^{-l}
< \frac{3^{-i-1}2^{-l}}{1-3^{-1}} = 3^{-i}2^{-l-1}\leq 2^{-i}.
\end{equation*}
Clearly then $\hausdmetr(X^i_j,X^{i'}_j)<2^{-\min\{i,i'\}}$ for all $i,i'$.
Defining $q$ in the obvious way, we obtain a $\delta_{\textrm{range}}'$-name for each $X_j$ ($j<n$), and each can be translated into a
$\deltamc$-name $q^{(j)}$.

Now, consider the possibility that $K=\emptyset$.
Observe for $p\in\dom\deltamc$ that
\begin{equation*}
(p \text{ contains ideal cover }\emptystring) \text{ iff }(p \text{ contains only ideal cover }\emptystring)
\text{ iff }p\in\deltamc^{-1}\{\emptyset\}.
\end{equation*}
Using this condition it is possible to decide from $p\in\deltamc^{-1}\{K\}$ whether $K=\emptyset$.  If
so, we output $n.\langle p,p,\dots\rangle$ for some fixed $n\geq 1$, otherwise we output
$n.\langle q^{(0)},\dots,q^{(n-1)},0^{\ordomega},0^{\ordomega},\dots\rangle$ defined as above.
\end{proof}

Next we must recall the effective Baire category theorem (see \cite[Thm 7.20]{BrattkaHertlingWeihrauch08} and
the references there).
\begin{thm}\label{thm:effbct}
Suppose $X$ is a complete computable metric space.  Then
\begin{equation*}
B:\subseteq\Pi^0_1(X)^{\N}\crsarr X^{\N},
(A_i)_{i\in\N}\mapsto\{(x_i)_{i\in\N}\setconstr (x_i)_i\textrm{ dense in }X\setminus\bigcup_i A_i\}
\end{equation*}
($\dom B=\{(A_i)_i\setconstr \textrm{each $A_i$ nowhere dense}\}$) is computable.
\end{thm}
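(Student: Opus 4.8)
The plan is to produce, uniformly in a $(\delta_{\Pi^0_1})^{\ordomega}$-name of $(A_i)_i$, a $\delta_X^{\ordomega}$-name of a sequence $(x_n)_n$ by computing for each index $n$ a point $x_n\in\alpha(n)\setminus\bigcup_i A_i$ as the limit of a computably chosen nested sequence of ideal closed balls. First I would pass to $\delta_{\Sigma^0_1}$-names of the open complements $X\setminus A_i$; since each $A_i$ is closed and nowhere dense, each $X\setminus A_i$ is open and \emph{dense}, and from its name one enumerates ideal open balls $\alpha(c)$ with $\bigcup_c\alpha(c)=X\setminus A_i$.

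Fix $n$. Every ideal ball $\alpha(n)=\ball{\nu(\pr_1 n)}{\nu_{\Qplus}(\pr_2 n)}$ is nonempty, as it contains its centre, so I would build indices $m_0,m_1,\dots\in\N$ of ideal balls satisfying $\hat{\alpha}(m_0)\subseteq\alpha(n)$, $\hat{\alpha}(m_k)\subseteq\alpha(m_{k-1})$ for $k\geq 1$, $\hat{\alpha}(m_k)\subseteq X\setminus A_k$ for every $k$, and $\nu_{\Qplus}(\pr_2 m_k)<2^{-(k+1)}$. All inclusions would be witnessed by the formal inclusion $\sqsubset$ of \pref{def:incldef}, using the recorded implication $c\sqsubset d\implies\hat{\alpha}(c)\subseteq\alpha(d)$: at stage $k$ I dovetail a search over candidate indices and over the balls $\alpha(c)$ enumerated in the name of $X\setminus A_k$, halting once some $m_k$ is found with $m_k\sqsubset m_{k-1}$ (respectively $m_0\sqsubset n$), $m_k\sqsubset c$ and $\nu_{\Qplus}(\pr_2 m_k)<2^{-(k+1)}$.

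The crucial point --- the one where the hypotheses enter essentially --- is that each stage-$k$ search halts. As $A_k$ is nowhere dense and $\alpha(m_{k-1})$ is a nonempty open set, the dense open set $X\setminus A_k$ meets $\alpha(m_{k-1})$ in a nonempty open set; taking a point $y$ there and a ball $\alpha(c)\ni y$ from the name of $X\setminus A_k$, a routine consequence of the definition of $\sqsubset$ together with density of $\nu$ (the refined-inclusion property (\ref{inclone})) yields a small ideal ball about a rational point near $y$ whose closed version lies in $\alpha(m_{k-1})\cap\alpha(c)$ with radius $<2^{-(k+1)}$, so the required formal inclusions hold and the c.e.~search terminates. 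The centres $\nu(\pr_1 m_k)$ then form a Cauchy sequence of the exact modulus demanded by $\delta_X$: for $k'>k$ one has $\nu(\pr_1 m_{k'})\in\alpha(m_k)$, whence $d(\nu(\pr_1 m_k),\nu(\pr_1 m_{k'}))<2^{-(k+1)}<2^{-\min\{k,k'\}}$. By completeness of $X$ they converge to a point $x_n\in\bigcap_k\hat{\alpha}(m_k)$, so $x_n\in\hat{\alpha}(m_0)\subseteq\alpha(n)$ while $x_n\in\hat{\alpha}(m_k)\subseteq X\setminus A_k$ for every $k$; that is, $x_n\in\alpha(n)\setminus\bigcup_i A_i$.

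Carrying this out uniformly in $n$ gives a computable $\delta_X^{\ordomega}$-name of $(x_n)_n$. To check density in $X\setminus\bigcup_i A_i$, let $z$ lie in this set and $\epsilon>0$; since the ideal balls form a basis there is an $n$ with $z\in\alpha(n)\subseteq\ball{z}{\epsilon}$, and from $x_n\in\alpha(n)$ we get $d(x_n,z)<\epsilon$. As each $x_n$ already lies in $X\setminus\bigcup_i A_i$, the sequence is as required and $B$ is computable. The genuine obstacle is the termination argument for the stage-$k$ search --- verifying that nowhere-density of $A_k$ forces a formally included small ball to appear --- the rest being routine dovetailing and bookkeeping.
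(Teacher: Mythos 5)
Your proof is correct. Note that the paper itself gives no proof of this theorem---it is imported from the literature (the statement is prefaced by ``we must recall the effective Baire category theorem'' with a citation to Brattka--Hertling--Weihrauch, Thm 7.20)---and your argument is precisely the standard one from that source: for each ideal ball $\alpha(n)$, effectively construct a nested sequence of formally included closed ideal balls with shrinking radii, each avoiding the next nowhere dense set, where nowhere-density guarantees termination of each c.e.\ search, completeness gives the limit point $x_n\in\alpha(n)\setminus\bigcup_i A_i$, and the recorded centres form a valid Cauchy name with the required modulus.
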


\begin{prop}(cf.~\cite[Ch 4,Thm 8]{Bishop}, \cite[Ch 7,Prop 4.14]{TroelstravanD})\label{prn:bilocated}
Let $X$ be a computable metric space.
Define $p^{+},p^{-}:\R\times\Cont(X,\R)\to\Sigma^0_1(X)$ and
$P^{+},P^{-}:\mathcal{K}(X)\times\R\times\Cont(X,\R)\to\mathcal{K}(X)$ by
\begin{align*}
p^{+}(\alpha,f) &:=f^{-1}(\alpha,\infty),\onespace p^{-}(\alpha,f):=f^{-1}(-\infty,\alpha),\\
P^{+}(K,\alpha,f) &:=K\cap f^{-1}[\alpha,\infty),\onespace P^{-}(K,\alpha,f):=K\cap f^{-1}(-\infty,\alpha].
\end{align*}
$p^{+},p^{-}$ are computable and $P^{+},P^{-}$ are $(\deltac,\rho,[\delta_X\to\rho];\deltac)$-computable.

Moreover $A$ is $(\deltamc,\delta_{\Q}^2,\delta;[\rho,\deltamc^2])$-computable where
$\delta:=[\delta_X\to\rho]$ and
$A:\subseteq\mathcal{K}(X)\times\Q^2\times\Cont(X,\R)\crsarr\R\times\mathcal{K}(X)^2$ is defined by
\begin{equation*}
\begin{split}
A(K,a,b,f):=\{(\alpha,P^{-}(K,\alpha,f),P^{+}(K,\alpha,f))\setconstr a<\alpha<b \wedge
\cl{K\cap p^{-}(\alpha,f)}=P^{-}(K,\alpha,f) \wedge \\
\cl{K\cap p^{+}(\alpha,f)}=P^{+}(K,\alpha,f)\},
\end{split}
\end{equation*}
with $\dom A=\{(K,a,b,f)\setconstr K\neq\emptyset\wedge a<b\}$.
\end{prop}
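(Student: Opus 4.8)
The plan is to establish the three computability assertions in order, the genuine content lying in the construction of the good level $\alpha$ for the operation $A$. For $p^{+}(\alpha,f)=f^{-1}(\alpha,\infty)$ I would observe that $x\in p^{+}(\alpha,f)$ iff $f(x)>\alpha$, and enumerate into a $\delta_{\Sigma^0_1}$-name every ideal ball $\alpha(a)$ on which the $[\delta_X\to\rho]$-name of $f$ together with the $\rho$-name of $\alpha$ forces $f>\alpha$; continuity of $f$ guarantees these balls exhaust $p^{+}(\alpha,f)$, and $p^{-}$ is symmetric. For $P^{+}(K,\alpha,f)=K\cap f^{-1}[\alpha,\infty)=K\setminus p^{-}(\alpha,f)$ I would use that ``compact minus open'' is $\deltac$-computable: an ideal cover $v$ covers $P^{+}$ iff $v$ together with finitely many ideal balls formally inside $p^{-}(\alpha,f)$ (read off the $\delta_{\Sigma^0_1}$-name just produced) covers $K$, which is semidecidable from the $\deltac$-name of $K$, while compactness supplies the converse. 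Enumerating exactly these $v$ yields a $\deltac$-name of $P^{+}$, and $P^{-}$ is symmetric.

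For $A$ the point is to locate $\alpha\in(a,b)$ at which the two conditions $\cl{K\cap p^{\mp}(\alpha,f)}=P^{\mp}(K,\alpha,f)$ hold. I would first record that the inclusions $\cl{K\cap p^{\mp}(\alpha,f)}\subseteq P^{\mp}(K,\alpha,f)$ are automatic, so only the reverse inclusions can fail; call $\alpha$ \emph{bad} if either does. Unwinding, $\alpha$ is bad on the minus side exactly when some $x\in K$ has $f(x)=\alpha$ and $f\ge\alpha$ throughout $K\cap b$ for some ideal ball $b\ni x$. I would then apply the decomposition \pref{thm:decomp} at every scale $2^{-l}$, writing $K=\bigcup_{j<n_l}K^{(l)}_j$ with each piece carrying a $\deltamc$-name and $\diam K^{(l)}_j<2^{-l}$, and claim every bad $\alpha$ equals $m^{(l)}_j:=\min_{K^{(l)}_j}f$ (minus side) or $M^{(l)}_j:=\max_{K^{(l)}_j}f$ (plus side) for suitable $l,j$: given a badness witness $x$, once $2^{-l}$ drops below the distance from $x$ to the sphere of $b$ the piece containing $x$ lies inside $b$, whence $f\ge\alpha$ on it and $m^{(l)}_j=f(x)=\alpha$. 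Since each piece comes with a $\deltamc$-name (equivalently $\delta_{\textrm{Hausdorff}}$-data), the reals $m^{(l)}_j,M^{(l)}_j$ are $\rho$-computable uniformly in $l,j$, the finite Hausdorff approximations providing the rate.

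Thus the bad set is contained in the countable, uniformly $\rho$-computable family $\{m^{(l)}_j\}\cup\{M^{(l)}_j\}$. Each singleton is a nowhere dense closed subset of $\R$ whose $\delta_{\Pi^0_1}$-name is computable uniformly from the $\rho$-name of its point (enumerate rational intervals lying strictly to one side of it). Feeding this computable sequence of nowhere dense $\Pi^0_1$ sets to the effective Baire category theorem \pref{thm:effbct} (with the complete computable metric space $\R$) yields a sequence dense in the complement of their union; I would search it for its first member in the open interval $(a,b)$, which is possible since $a,b$ are rational (so the test is r.e.) and density forces such a member to exist. The resulting $\alpha$ lies in $(a,b)$ and avoids every bad value, so both conditions $\cl{K\cap p^{\mp}(\alpha,f)}=P^{\mp}(K,\alpha,f)$ hold, and the $\rho$-name of $\alpha$ is delivered by the theorem.

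It remains to output $\deltamc$-names of $P^{\pm}(K,\alpha,f)$. As $\alpha$ is good, the points of $K$ with $f<\alpha$ are dense in $P^{-}=K\cap f^{-1}(-\infty,\alpha]$, so enumerating those members of the dense sequence of $K$ (from its $\delta_{\textrm{range}}'$-name, via $\deltamc\equiv\delta_{\textrm{range}}'$) that satisfy the r.e. condition $f<\alpha$ gives a dense sequence in $P^{-}$. To upgrade density to the Hausdorff--Cauchy rate required by $\delta_{\textrm{range}}'$ I would again invoke \pref{thm:decomp}: at scale $2^{-l}$ retain exactly the pieces $K^{(l)}_j$ with $m^{(l)}_j<\alpha$, whose union approximates $P^{-}$ to within $2^{-l}$ in Hausdorff distance by the good-$\alpha$ density, with \pref{lem:coverlem} turning these into the needed ideal data; $P^{+}$ is symmetric. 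The main obstacle is precisely this last step: a generic sublevel set need not be located, and it is only avoidance of the countable bad set that makes $K\cap\{f<\alpha\}$ dense in $K\cap\{f\le\alpha\}$; obtaining the Hausdorff rate uniformly in the inputs, rather than mere density, is what forces the use of the decomposition theorem and the careful bookkeeping around it.
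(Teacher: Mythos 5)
Your proposal follows essentially the same route as the paper's proof: decompose $K$ at every scale via \pref{thm:decomp}, take as the countable set of critical values the minima and maxima of $f$ on the pieces, pick $\alpha\in(a,b)$ avoiding them all by the effective Baire category theorem \pref{thm:effbct}, and assemble Hausdorff-type (hence $\deltamc$-) names of $P^{\mp}(K,\alpha,f)$ from the pieces whose relevant extremum lies strictly on the correct side of $\alpha$ (the paper selects one witness point per retained piece rather than whole pieces, an immaterial difference, and verifies the closure equalities by the same small-piece-inside-a-bad-ball contradiction you use to bound the bad set). The only point you leave implicit is the case $P^{\sigma}(K,\alpha,f)=\emptyset$, which must be detected and handled separately (decidably, since avoidance of the critical values makes the retention test at scale $0$ decidable) because $\delta_{\textrm{Hausdorff}}\equiv\delta_{\textrm{range}}'$ represents only nonempty compact sets; the paper treats this case explicitly at the end of its proof.
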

\begin{proof}
First, $p^{+},p^{-},P^{+},P^{-}$ are computable: by computability of preimages of $f$
($\Sigma^0_1(\R)\times\Cont(X,\R)\to\Sigma^0_1(X)$, $\Pi^0_1(\R)\times\Cont(X,\R)\to\Pi^0_1(X)$)
and the operation $\cap:\Pi^0_1(X)\times\mathcal{K}_{>}(X)\to\mathcal{K}_{>}(X)$.

Now, given a $\deltamc$-name of $K$, for each $k\in\N$ consider a decomposition $K=\bigcup_{j<N_k}X^k_j$ as in \pref{thm:decomp} with
$\max_j\diam X^k_j<2^{-k-1}$.  We can compute maxima and minima of $f$ on each $X^k_j$, effectively in $k$, $j$
(and uniformly in names of $K$, $f$), and will call these $c^{\pm}_{k,j}$.
As they form a sequence computable from $K$, $f$ (for instance
$c^{-}_{0,0},c^{+}_{0,0};\dots; c^{-}_{0,N_0-1},
c^{+}_{0,N_0-1}; c^{-}_{1,0}, c^{+}_{1,0};\dots$)
one can compute $\alpha\in (a,b)$ which avoids all $c^{\pm}_{k,j}$ (formally, use \pref{thm:effbct}).

Using positive information on $X^k_j$, if
$c^{-}_{k,j}<\alpha$ we compute some $x^{-}_{k,j}\in X^k_j$ with $f(x^{-}_{k,j})<\alpha$, similarly if
$\alpha<c^{+}_{k,j}$ we compute some $x^{+}_{k,j}\in X^k_j$ with $\alpha<f(x^{+}_{k,j})$.
We will write $M^{-}_k:=\{j<N_k\setconstr c^{-}_{k,j}<\alpha\}$,
$M^{+}_k:=\{j<N_k\setconstr c^{+}_{k,j}>\alpha\}$ and set
$Y^{-}_k:=\{x^{\sigma}_{k,j}\setconstr j\in M^{\sigma}_k\}$ ($\sigma=+,-$).
Note that $Y^{\sigma}_k$ is a finite $2^{-k-1}$-approximation to
$X_{\alpha}^{\sigma}:=K\cap p^{\sigma}(\alpha,f)$ (for, $Y^{-}_k\subseteq X^{-}_{\alpha}$ while any
$x\in X^{-}_{\alpha}$ has some $j<N_k$ such that $X^k_j\ni x$, with necessarily $c^{-}_{k,j}\leq f(x)<\alpha$
and $d(x,x^{-}_{k,j})\leq\diam X^k_j<2^{-k-1}$.  The proof for $\sigma=+$ is similar).
As a consequence, we have the equivalence $X^{\sigma}_{\alpha}=\emptyset$ iff
$Y^{\sigma}_k=\emptyset$ for all $k$ iff $Y^{\sigma}_k=\emptyset$ for some $k$, and also for each $k$ the equivalence $Y^{\sigma}_k=\emptyset$ iff $M^{\sigma}_k=\emptyset$.  Moreover, we can (if
$M^{\sigma}_{k+1}\neq\emptyset$) compute finite sets of ideal points approximating
$Y^{\sigma}_{k+1}$: writing $M^{\sigma}_{k+1}=\{j_1,\dots,j_P\}$ in strictly ascending order and
$p^{(k,i)}\in\delta_X^{-1}\{x^{\sigma}_{k+1,j_i}\}$ for the Cauchy name calculated by our algorithm, define
$u^{\sigma}_k=\langle p^{(k,1)}_{k+1},\dots,p^{(k,P)}_{k+1}\rangle$; then
$\nu_{\mathcal{Q}}(u^{\sigma}_k)\subseteq\clnbhd{Y^{\sigma}_{k+1}}{2^{-k-1}}$ and
$Y^{\sigma}_{k+1}\subseteq \clnbhd{\nu_{\mathcal{Q}}(u^{\sigma}_k)}{2^{-k-1}}$.

Overall we get
\begin{equation}\label{eq:finapprox}
\nu_{\mathcal{Q}}(u^{\sigma}_k)\subseteq \clnbhd{X^{\sigma}_{\alpha}}{2^{-k-1}}\wedge
X^{\sigma}_{\alpha}\subseteq \nbhd{\nu_{\mathcal{Q}}(u^{\sigma}_k)}{2^{-k-2}+2^{-k-1}}
\end{equation}
and in particular any $k,l\in\N$ satisfy $\nu_{\mathcal{Q}}(u^{\sigma}_k)\subseteq
\nbhd{\nu_{\mathcal{Q}}(u^{\sigma}_l)}{2^{-k-1}+2^{-l-2}+2^{-l-1}}$.  For $k>l$ thus
\begin{align*}
\hausdmetr(\nu_{\mathcal{Q}}(u^{\sigma}_k),\nu_{\mathcal{Q}}(u^{\sigma}_l)) & <
\max\{2^{-k-1}+2^{-l-2}+2^{-l-1},2^{-l-1}+2^{-k-2}+2^{-k-1}\} \\
& = 2^{-k-1}+2^{-l-2}+2^{-l-1} =
2^{-l}(2^{-(k-l)-1}+2^{-2}+2^{-1})\leq 2^{-l}.
\end{align*}
On the other hand, for any $\epsilon>0$ there exists $k$ with
$2^{-k-2}+2^{-k-1}<\epsilon$ and in this case (\ref{eq:finapprox}) implies
$\hausdmetr(\cl{X^{\sigma}_{\alpha}},\nu_{\mathcal{Q}}(u^{\sigma}_k))<\epsilon$.  For
$X^{\sigma}_{\alpha}\neq\emptyset$ we have thus shown $u^{\sigma}_0 u^{\sigma}_1\dots\in\Bairespc$ is a
$\delta_{\textrm{Hausdorff}}$-name for $\cl{X^{\sigma}_{\alpha}}$, computable from the inputs.

We now verify $\cl{X^{-}_{\alpha}}=K\cap f^{-1}(-\infty,\alpha]$.  First,
$\cl{X^{-}_{\alpha}}\subseteq K\cap f^{-1}(-\infty,\alpha]$ by closedness of $K$ and continuity of $f$.  On the
other hand, suppose there exists $x\in (K\cap f^{-1}\{\alpha\})\setminus\cl{X^{-}_{\alpha}}$, say
$V\in\mathcal{T}_X$ is such that $x\in V\cap K\subseteq K\cap f^{-1}[\alpha,\infty)$.  Then for $k$ sufficiently
large and $j<N_k$ such that
$X^k_j\ni x$, we have $X^k_j\subseteq V\cap K$, but by construction
$c^{-}_{k,j}\leq f(x)=\alpha$ implies $c^{-}_{k,j}<\alpha$ and thus $X^k_j\cap f^{-1}(-\infty,\alpha) \neq
\emptyset$, a contradiction.  $\cl{X^{+}_{\alpha}}=K\cap f^{-1}[\alpha,\infty)$ is verified in a similar way.

Finally, we describe the output of the algorithm.  If $X^{\sigma}_{\alpha}=\emptyset$
(equivalently, $M^{\sigma}_0=\emptyset$) for some $\sigma\in\{\pm\}$ we should output some fixed computable
$\deltamc$-name of $\emptyset$ as the name of $\cl{X^{\sigma}_{\alpha}}$ for the corresponding $\sigma$.
Otherwise, we should compute a $\delta_{\textrm{Hausdorff}}$-name of $\cl{X^{\sigma}_{\alpha}}$ (as above) and
translate this into a $\deltamc$-name.  Since $M^{\sigma}_0\in E(\N)$ is computable from the inputs, the choice between these two cases is decidable.
This completes the description of the algorithm.
\end{proof}

Finally, we give our converse to \pref{thm:retracta}.
\begin{prop}\label{prn:retractb}
Suppose $X$ is $\deltac$-computable and
$E:\subseteq\mathcal{A}(X)\crsarr C(X,X),A\mapsto \{f\setconstr\img f=A\wedge f|_A=\id_A\}$
($\dom E=\mathcal{A}(X)\setminus\{\emptyset\}$) is well-defined and computable, where $\mathcal{A}(X)$ is represented by $\deltar\sqcap\delta_{\textrm{dist}}^{>}$.  Then $X$ is zero-dimensional and
$M$ from \pref{sec:zdspcs} is computable.
\end{prop}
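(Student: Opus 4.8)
The plan is to effectivise the classical fact that a space in which every nonempty closed set is a retract is zero-dimensional, the one nonconstructive ingredient being supplied by the bilocated-set construction of \pref{prn:bilocated}. The topological half is immediate: well-definedness of $E$ is exactly the existence statement (\ref{zdix}) of \pref{sec:intro}, so $X$ is zero-dimensional by the equivalences recalled there. Concretely, for $x\in U$ one takes the nonempty closed set $A=\{x\}\disjunion(X\setminus U)$ and a retraction $f\in E(A)$; since some $\ball{x}{\epsilon}$ meets $A$ only in $x$, the set $\{x\}$ is clopen in $A$, whence $f^{-1}\{x\}$ is clopen with $x\in f^{-1}\{x\}\subseteq U$ (no point of $X\setminus U\subseteq A$ maps to $x$). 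The real work is to perform this computably, which is where $\deltac$-computability (hence effective compactness) of $X$ and the representation $\deltar\sqcap\delta_{\textrm{dist}}^{>}$ of the input to $E$ enter.

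To show $M$ computable, given $(x,U)\in\dom M$ I would first use effective compactness to search for a rational $b>0$ with $\clball{x}{b}\subseteq U$; such $b$ exists as $x\in U$, and the inclusion is semidecidable since $\clball{x}{b}$ is compact and $U$ is given as a union of ideal balls. I would then apply \pref{prn:bilocated} with $K=X$ (a $\deltamc$-name of $X$ being computable from its $\deltac$-name, as every ideal ball meets $X$), the computable function $d(\cdot,x)\in\Cont(X,\R)$, and rationals $0<a<b$, obtaining a threshold $\alpha\in(a,b)$ together with $\deltamc$-names of the bilocated pieces $P^{-}=\clball{x}{\alpha}$ and $P^{+}=\{z\setconstr d(z,x)\geq\alpha\}$. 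Invoking \pref{prn:bilocated} rather than fixing a rational radius is essential, and is the main obstacle the construction overcomes: it yields a $\deltamc$-name, hence (via $\deltamc\equiv\delta_{\textrm{Hausdorff}}\equiv\deltar'$) a dense sequence, of $P^{+}$, whereas extracting a dense sequence from the bare $\Pi^0_1$ set $\{z\setconstr d(z,x)\geq\alpha\}$ is not computable in general.

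Next I would assemble a $(\deltar\sqcap\delta_{\textrm{dist}}^{>})$-name of $A:=\{x\}\cup P^{+}$, which is nonempty with $x\notin P^{+}$ since $\alpha>0$: the $\deltar$-part lists $x$ together with a dense sequence of $P^{+}$, and the $\delta_{\textrm{dist}}^{>}$-part is $d_A=\min(d(\cdot,x),d_{P^{+}})$ computed from below, using that $d(\cdot,x)$ is computable and that $d_{P^{+}}(z)>q$, i.e.~$P^{+}\subseteq\{w\setconstr d(w,z)>q\}$, is semidecidable by compactness of $P^{+}$. Feeding this name to the computable $E$ returns a retraction $g\in E(A)$ as an $[\delta_X\to\delta_X]$-name, and I output $W:=g^{-1}\{x\}$. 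The cleanest way to certify $W$ as computably clopen is to form the computable function $h:=d(g(\cdot),x)\in\Cont(X,\R)$: since $\img g\subseteq A$ and $A\cap\ball{x}{\alpha}=\{x\}$, one has $W=\{z\setconstr h(z)<\alpha\}=\{z\setconstr h(z)=0\}$, so the first (sublevel-set) description gives a $\delta_{\Sigma^0_1}$-name and the second (the zero set $\{z\setconstr h(z)\leq 0\}$) a $\delta_{\Pi^0_1}$-name, together a $\delta_{\Delta^0_1}$-name of $W$. Finally $g|_A=\id_A$ forces $W\cap P^{+}=\emptyset$, so $x\in W\subseteq\ball{x}{\alpha}\subseteq\clball{x}{b}\subseteq U$ and $W\in M(x,U)$.

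The degenerate case $P^{+}=\emptyset$ is detected within \pref{prn:bilocated} (from emptiness of the corresponding finite approximation), and then $X=\ball{x}{\alpha}\subseteq U$, $A=\{x\}$, $g$ is constant, and $W=X$ works; thus the whole procedure is uniform. The only genuinely delicate points are the bilocation step, already isolated in \pref{prn:bilocated} (itself resting on \pref{thm:decomp} and \pref{thm:effbct}), and the bookkeeping needed to turn the $\deltamc$-names into the input name for $E$; everything else is composition of operations already shown computable.
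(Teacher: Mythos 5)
Your proposal is correct, and it follows the same overall strategy as the paper's proof: apply \pref{prn:bilocated} to $d(x,\cdot)$ on the effectively compact $X$, hand $E$ a closed set in which $x$ is isolated, and extract the clopen neighbourhood from the retraction's preimage of $x$. The implementation, however, is genuinely different. The paper invokes \pref{prn:bilocated} \emph{twice} and uses only the thresholds $\alpha_0<\alpha_1$ together with the bilocation identities $\cl{\ball{x}{\alpha_i}}=\clball{x}{\alpha_i}$ and $\cl{X\setminus\clball{x}{\alpha_i}}=X\setminus\ball{x}{\alpha_i}$: it retracts onto $A\union\{x\}$ for the compact annulus $A=\clball{x}{\alpha_1}\setminus\ball{x}{\alpha_0}$, obtains the $\deltar$-part of the input name by enumerating ideal points of the open annulus $V$ (these are dense in $\cl{V}=A$ precisely by bilocation), obtains the $\delta_{\textrm{dist}}^{>}$-part by intersecting a $\Pi^0_1$-name with the $\deltac$-computable $X$ and chaining reductions, and --- because with an annulus points far from $x$ may also map to $x$ --- must cut the preimage down to $W=\clball{x}{\alpha_1}\cap f^{-1}\{x\}=\ball{x}{\alpha_0}\cap f^{-1}\ball{x}{\alpha_0}$, the two descriptions supplying the $\Pi^0_1$- and $\Sigma^0_1$-halves of the $\delta_{\Delta^0_1}$-name. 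You invoke \pref{prn:bilocated} \emph{once} and consume its full output: the $\deltamc$-name of $P^{+}$, converted to a dense sequence via $\deltamc|^{\mathcal{K}(X)\setminus\{\emptyset\}}\equiv\delta_{\textrm{Hausdorff}}\equiv\deltar'$, replaces the paper's hand-built $\deltar$-name, and retracting onto the full exterior $\{x\}\cup P^{+}$ traps $W=g^{-1}\{x\}$ inside $\ball{x}{\alpha}$ automatically, the two level-set descriptions of $h=d(g(\cdot),x)$ then giving the clopen name cleanly. Your route is more economical (one bilocation call, no final intersection step) and keeps every appeal to bilocation encapsulated inside \pref{prn:bilocated}; the paper's route shows that the $\rho$-component of that proposition's output already suffices, at the price of reconstructing the hyperspace names by hand. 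Two small points you should make fully precise: semideciding $\clball{x}{b}\subseteq U$ requires first computing a $\deltac$-name of $\clball{x}{b}$, obtained (as the paper does elsewhere) by intersecting its $\Pi^0_1$-name with the $\deltac$-computable $X$ via $\cap:\Pi^0_1(X)\times\mathcal{K}_{>}(X)\to\mathcal{K}_{>}(X)$; and for the lower semicomputation of $d_{P^{+}}$ you need not argue from scratch --- \pref{lem:infdist} applied to the pair $(\{z\},P^{+})$ gives exactly this, uniformly in $z$.
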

\begin{proof}
First, (nonuniformly) note any $\deltac$-name of $X$ is also a $\deltamc$-name of $X$.
For given $x\in X$ and $k\in\N$ we will compute a $\delta_{\Delta^0_1}$-name of a neighbourhood $W\ni x$ with
$\diam W\leq 2^{-k}$.  Namely, if $p\in\delta_X^{-1}\{x\}$ we will apply \pref{prn:bilocated} twice to
$d(x,\cdot)$ (in place of $f$) to get some $0<\alpha_0<\alpha_1<2^{-k-1}$ with
\begin{equation*}
\cl{\ball{x}{\alpha_i}}=\clball{x}{\alpha_i} \wedge
\cl{X\setminus\clball{x}{\alpha_i}}=X\setminus\ball{x}{\alpha_i}
\end{equation*}
for each $i$.  Then, in particular, $A:=\clball{x}{\alpha_1}\setminus\ball{x}{\alpha_0}$ is the closure of
$V:=\ball{x}{\alpha_1}\setminus\clball{x}{\alpha_0}$.

\noindent{Proof of claim:}
Clearly
\begin{equation*}
y\in V\iff \alpha_0<d(x,y)<\alpha_1 \implies \alpha_0\leq d(x,y)\leq\alpha_1\iff y\in A,
\end{equation*}
so also $\cl{V}\subseteq A$.  Conversely, if $y\in A$, either $\tau:=d(x,y)\in(\alpha_0,\alpha_1)$ or
$\tau=\alpha_0$ or $\tau=\alpha_1$.  If $\tau=\alpha_0$, use
$\cl{X\setminus\clball{x}{\alpha_0}}=X\setminus\ball{x}{\alpha_0}$ to get some sequence
$(y_j)_{j\in\N}\subseteq X\setminus\clball{x}{\alpha_0}$ convergent to $y$.  For large $j$ we get $y_j\in V$,
so $y\in\cl{V}$.  Case $\tau=\alpha_1$ is similar using $\cl{\ball{x}{\alpha_1}}=\clball{x}{\alpha_1}$.  This completes proof of the claim.

Note Cauchy names of $\alpha_0,\alpha_1$ allow us to compute a $\delta_{\Sigma^0_1}$-name of
$V=d(x,\cdot)^{-1}(\alpha_0,\alpha_1)$ from $p$, hence (using properties of formal inclusion) a $\deltar$-name
of $\cl{V}=A$.  On the other hand, a $\delta_{\Pi^0_1}$-name of $A=d(x,\cdot)^{-1}[\alpha_0,\alpha_1]$ can be
used to compute a $\delta_{\textrm{dist}}^{>}$-name of $A$ (and similarly for $A\union\{x\}$), since (in notation of \cite{BrattkaPresser})
$\deltac\leq\delta_{\mathcal{K}}^{>}$ and $\delta^{>}\leq\delta_{\textrm{dist}}^{>}$, and
$\cap:\Pi^0_1(X)\times\mathcal{K}_{>}(X)\to\mathcal{K}_{>}(X)$ is computable.  Consequently, a name of some
$f\in E(A\union\{x\})$ is available.  Now let $W:=\clball{x}{\alpha_1}\cap f^{-1}\{x\}$.  Since $f(A)$ is
disjoint from $x$, also $W=\ball{x}{\alpha_0}\cap f^{-1}\ball{x}{\alpha_0}$,
and the result follows (since $2\alpha_0\leq 2^{-k}$).
\end{proof}

More formally, one can extract from \pref{thm:retracta} and \pref{prn:retractb} the following equivalence statement: if computable metric space $X$ is $\deltac$-computable then
\begin{equation*}
\dim X=0\iff E \text{ well-defined \& computable }\iff X \text{ eff.~of covering dimension }\leq 0.
\end{equation*}
This uses $\deltadc\equiv\deltac|^{\mathcal{Z}_{\textrm{c}}(X)}$; we leave details to the reader.

\begin{acknowledgements*}
The author is very grateful to the anonymous referees, whose comments have helped in improving the presentation
of this paper.  Many thanks also to Paul Wright, who proofread the paper at a late stage.
\end{acknowledgements*}
\bibliographystyle{plain}
\bibliography{short3}
\end{document}